\titleformat{\section}{\centering\normalfont\scshape}{\thesection.}{.5em}{#1}
\titleformat{\subsection}[runin]{\normalfont\itshape}{\textnormal{\thesubsection.}}{.5em}{#1.}
\titleformat{\subsubsection}[runin]{\normalfont\itshape}{\thesubsubsection.}{.5em}{#1.}
\titlespacing{\section}{0em}{1em}{0.5em}
\titlespacing{\subsection}{0em}{.5em}{0.5em}
\newcommand{\detail}[1]{}
\newcommand{\cmt}[1]{}
\theoremstyle{plain}
\newtheorem{thm}{Theorem}[section]
\newtheorem{lem}[thm]{Lemma}
\newtheorem{cor}[thm]{Corollary}
\newtheorem{prop}[thm]{Proposition}
\theoremstyle{remark}
\newtheorem{remark}[thm]{Remark}
\newtheorem*{remarka}{Remark}
\newtheorem*{defn}{Definition}
\def\BlackAndWhiteOnly{0}
\numberwithin{equation}{section}
\def\qA{\mathrm{qA}}
\def\W{{\mathcal W}}
\def\A{{\mathcal A}}
\def\R{\mathbb{R}}
\def\C{\mathbb{C}}
\def\Z{\mathbb{Z}}
\def\bbone{{\mathbbm 1}}
\def\a{\alpha}
\def\symb{\mathfrak{S}}
\def\lc{\lesssim}
\def\inn#1#2{\langle#1,#2\rangle}
\newcommand{\assouadconst}[1]{\chi_{\mathrm{A},\gamma}^{#1}(2^{-j})}
\newcommand{\jr}[1]{{\color{blue}{[JR: {#1}]}}}
\newcommand{\Qfourx}[1]{\d*(\d-1)/(\d*\d+2*#1-1)}
\newcommand{\Qfoury}[1]{(\d-1)/(\d*\d+2*#1-1)}
\newcommand\Qthreex[1]{(\d-#1)/( \d-#1+1)}
\newcommand\Qthreey[1]{1/(\d-#1+1)}
\newcommand{\definecoords}{
\def\ptsize{.1pt}
\def\QbgFillOpacity{.2}
\def\QbgFillColor{black}
\def\QbgLineOpacity{.6}
\def\QbgDrawCritSeg{1}
\def\QbgCritSegStyle{solid}
\def\QbgCritSegOpacity{\QbgLineOpacity}
\def\QbbFillOpacity{.1}
\def\QbbLineOpacity{.5}
\if\BlackAndWhiteOnly1
\def\AFillColor{black}
\def\AFillOpacity{.3}
\else
\def\AFillColor{red}
\def\AFillOpacity{.1}
\fi
\def\ALineOpacity{.8}
\coordinate (Q1) at (0,0);
\coordinate (Q2) at ( {(\d-1)/(\d-1+\b)}, {(\d-1)/(\d-1+\b)} );
\coordinate (Q3) at ( {\Qthreex{\b}}, { \Qthreey{\b} } );
\coordinate (Q30) at ( {\Qthreex{0}}, { \Qthreey{0} } );
\coordinate (Q4b) at ( { \Qfourx{\b} }, { \Qfoury{\b} } );
\coordinate (Q4g) at ( { \Qfourx{\g} }, { \Qfoury{\g} } );
\coordinate (R) at ( { \d*(\d-1)/ (\d*\d-1+\b) } , { (\d-1)/(\d*\d-1+\b) } );
\coordinate (C1) at ( { (\Qfourx{\b}+\Qfourx{\g})/2 }, {(\Qfoury{\b}+\Qfoury{\g})/2} );
\coordinate (C2) at (Q4b);
}
\newcommand{\drawauxlines}[2]{
\draw (0,0) [->] -- (0,1) node [left] {$\frac1q$};
\draw (0,0) [->] -- (1,0) node [below] {$\frac1p$};
\draw [dashed,opacity=.3] (1,0) -- (0,1);
\draw [dashed,opacity=.3] (#1) -- (1,{1/\d});
\draw [dashed,opacity=.3] (#2) -- (1,1);
\draw [dashed,opacity=.1]
(.5, 0) -- (.5, .5);
}
\def\theGamma{\gamma}
\newcommand{\drawQbg}{
\fill (Q1) node [left] {$Q_1$} circle [radius=.02em];
\fill (Q2) node [above left] {$Q_{2,\beta}$} circle [radius=\ptsize];
\fill (Q3) node [right] {$Q_{3,\beta}$} circle [radius=\ptsize];
\fill (Q4g) node [below right] {$Q_{4,\theGamma}$} circle [radius=\ptsize];
\fill [color=\QbgFillColor,opacity=\QbgFillOpacity] (Q1) -- (Q2) -- (Q3) -- (Q4g) -- cycle;
\draw [opacity=\QbgLineOpacity] (Q1) -- (Q2) -- (Q3);
\draw [opacity=\QbgLineOpacity] (Q4g) -- (Q1);
\if\QbgDrawCritSeg1
\draw [style=\QbgCritSegStyle,opacity=\QbgCritSegOpacity] (Q3) -- (Q4g);
\fi
}
\newcommand{\drawQbb}{
\fill [opacity=\QbbFillOpacity] (Q4g) -- (Q4b) -- (Q3) -- cycle;
\draw [opacity=\QbbLineOpacity] (Q4g) -- (Q4b) -- (Q3);
}
\newcommand{\lblQbb}{
\fill (Q4b) node [below right] {$Q_{4,\beta}$} circle [radius=\ptsize];
}
\newcommand{\lblQthreezero}{
\fill (Q30) node [below] {$Q_{3,0}$} circle [radius=\ptsize];
}
\newcommand{\lblrad}{
\fill (R) node [below right] {$R_\beta$} circle [radius=\ptsize];
}
\newcommand{\drawrad}{
\fill [opacity=.05] (Q4g) -- (R) -- (Q3) -- cycle;
\draw [opacity=.3] (Q4g) -- (R) -- (Q3);
}
\newcommand{\drawA}{
\fill [color=\AFillColor,opacity=\AFillOpacity] (Q4g) .. controls (C1) and (C2) .. (Q3);
\draw [opacity=\ALineOpacity] (Q4g) .. controls (C1) and (C2) .. (Q3);
}
\def\c{}
\newcommand{\cjr}{}
\newcommand{\cas}{}
\begin{document}
\title
[Spherical maximal functions]
{Spherical maximal functions and \\ fractal dimensions of dilation sets}
\author
[ J. Roos and A. Seeger]
{Joris Roos \ \ \ \ Andreas Seeger}

\thanks{Research supported in part by the National Science Foundation (DMS 1764295) and a Simons Fellowship.}

\subjclass[2010]{42B25, 28A80}
\keywords{
Spherical maximal functions, $L^p$ improving estimates, Minkowski dimension, (quasi-)Assouad dimension, Assouad spectrum, Assouad regular sets}

\address{Joris Roos \\ Department of Mathematics \\ University of Wisconsin \\480 Lincoln Drive\\ Madison, WI
53706, USA}

\curraddr{Department of Mathematics and Statistics, University of Massachusetts Lowell, USA}
\email{joris\_roos@uml.edu}
\address{Andreas Seeger \\ Department of Mathematics \\ University of Wisconsin \\480 Lincoln Drive\\ Madison, WI
53706, USA} \email{seeger@math.wisc.edu}

\begin{abstract}
For the spherical mean operators $\mathcal{A}_t$ in $\mathbb{R}^d$, $d\ge 2$, we consider the maximal functions $M_Ef =\sup_{t\in E} |\mathcal{A}_t f|$, with dilation sets $E\subset [1,2]$.
In this paper we give a surprising characterization of the closed convex sets which can occur as closure of the sharp $L^p$ improving region of $M_E$ for some $E$.
This region depends on the Minkowski dimension of $E$, but also other properties of the fractal geometry such as the Assouad spectrum of $E$ and subsets of $E$.
A key ingredient is an essentially sharp result on $M_E$ for a class of sets called (quasi-)Assouad regular which is new in two dimensions.
\end{abstract}

\maketitle

\section{Introduction}

For a locally integrable function $f$ on $\R^d$ with $d\ge 2$ let
\[\A_t f(x) = \int f(x-ty) d\sigma(y), \]
where $t>0$ and $\sigma$ denotes the normalized surface measure on the unit sphere in $\R^d$. Given a set $E\subset (0,\infty)$ consider the maximal function
\[ M_E f(x) = \sup_{t\in E} |\A_t f(x)|, \]
which is well--defined at least on continuous functions $f$. In this paper we study sharp {\em $L^p$ improving properties} of $M_E$. By scaling considerations it is natural to restrict attention to sets $E\subset [1,2]$. We define the {\em type set} $\mathcal{T}_E$ associated with $M_E$ by
\[ \mathcal{T}_E = \{ (\tfrac1p,\tfrac1q)\in [0,1]^2\,:\, M_E\;\;\text{is bounded}\;\; L^p\to L^q\}. \]
We are interested in determining for a given set $E$ the type set $\mathcal T_E$ up to the boundary, i.e. we will focus mainly on the closure of this set. Note that since $\mathcal{T}_E$ is by interpolation convex, the interior of $\mathcal{T}_E$ is determined by $\overline{\mathcal{T}_E}$.

We consider two natural problems. First, for each given $E\subset [1,2]$ the goal is to determine $\overline{\mathcal{T}_E}$.
Second, we ask which closed convex subsets of $[0,1]^2$ arise as $\overline{\mathcal{T}_E}$ for some $E\subset [1,2]$.

In this paper we give a complete solution to the second problem: we will determine exactly which closed convex sets arise as $\overline{\mathcal{T}_E}$. Moreover, we also give a satisfactory answer to the first problem for a large class of sets $E$ that covers all examples previously considered in the literature.

The case of a single average, $E=\{\text{point}\}$, is covered by a classical result of Littman \cite{littman}. Sharp results for the case $E=[1,2]$ are due to Schlag \cite{schlag}, Schlag and Sogge \cite{schlag-sogge} and S. Lee \cite{slee}. The case $p=q$ for the full spherical maximal operator goes back to Stein \cite{stein} in the case $d\ge 3$ and to Bourgain \cite{bourgain2} in the case $d=2$ (see also \cite{mss}). For some early results in special cases of dilation sets see \cite{cpcalderon}, \cite{cw} and \cite[p. 92]{Wai79}.
A satisfactory answer for general $E$ in the case $p=q$, depending on the Minkowski dimension of $E$, was given in \cite{sww1}; see also \cite{sww2}, \cite{stw} for refinements and related results. We remark that, while the question of sharp $L^p$ improving bounds is interesting in its own right, it is also motivated by problems on sparse domination and weighted estimates for global maximal functions $\sup_{t\in E}\sup_{k\in \mathbb Z} |\A_{2^kt} f|$, {\it cf.} \cite{BFP}, \cite{lacey}.

In a recent joint paper \cite{ahrs} with T. Anderson and K. Hughes we addressed the $L^p\to L^q$ problem for $M_E$ when $q>p$ in dimensions $d\ge 3$, with some partial results for $d=2$. It turned out that satisfactory results cannot just depend on the (upper) Minkowski dimension of $E$ alone and other notions of fractal dimension are needed, in particular the Assouad dimension.

Let us recall some definitions. Let $E\subset [1,2]$.
For $\delta>0$ let $N(E,\delta)$ the $\delta$-covering number, {\it i.e.} the minimal number of intervals of length $\delta$ required to cover $E$.
The {\em (upper) Minkowski dimension} $\dim_{\mathrm{M}}\!E$ of $E$ is
\[ \dim_{\mathrm{M}}\!E = \inf\big\{ a>0\,:\, \exists\,c>0\;\text{s.t.}\,\forall\,\delta\in(0,1),\, N(E,\delta)\le c\, \delta^{-a}\ \big\}. \]
The {\em Assouad dimension} $\dim_{\mathrm{A}}\!E$ (\cite{assouad}) is defined by
\begin{align*}
\dim_{\mathrm{A}}\!E = \inf\left\{ a>0\,:\, \right. \exists\,c>0\; & \text{s.t.}\,\forall\,I,\,\delta\in (0,|I|),\,\\
& \left. N(E\cap I,\delta)\le c\, \delta^{-a} |I|^a\ \right\};
\end{align*}
\cmt{\[
\inf\left\{ a>0\,:\, \right. \exists\,c>0\; \text{s.t.}\,\forall\,I,\,\delta\in (0,|I|),\,\left. N(E\cap I,\delta)\le c\, \delta^{-a} |I|^a\ \right\};
\]}
here $I$ runs over subintervals of $[1,2]$.
Note that $0\le \dim_{\mathrm{M}}\!E\le \dim_{\mathrm{A}}\!E\le 1$. For $0\le \beta\le \gamma\le 1$ let
\begin{equation}\label{eqn:Qdef}\begin{aligned}
& \quad\quad\quad Q_1 = (0,0),\;
Q_{2,\beta} = (\tfrac{d-1}{d-1+\beta}, \tfrac{d-1}{d-1+\beta}),\;\\
& Q_{3,\beta} = (\tfrac{d-\beta}{d-\beta+1}, \tfrac{1}{d-\beta+1}),\;
Q_{4,\gamma} = (\tfrac{d(d-1)}{d^2+2\gamma-1}, \tfrac{d-1}{d^2 + 2\gamma - 1}).
\end{aligned}\end{equation}
Moreover, let $\mathcal{Q}(\beta,\gamma)$ denote the closed convex hull of the points $Q_1$, $Q_{2,\beta}$, $Q_{3,\beta}$, $Q_{4,\gamma}$, see Figure \ref{fig:Qbg} below.
Let $\mathcal{R}(\beta,\gamma)$ denote the union of the interior of $\mathcal{Q}(\beta,\gamma)$ with the line segment connecting $Q_1$ and $Q_{2,\beta}$, including $Q_1$, but excluding $Q_{2,\beta}$.
The paper \cite{ahrs} gives a sufficient condition for $M_E$ to be $L^p\to L^q$ bounded, in dimension $d\ge 3$, namely if $\beta=\dim_{\mathrm M}\!E$, $\gamma_*=\dim_{\mathrm A}\!E$ then
\begin{equation}\label{eqn:rbg} \mathcal R(\beta,\gamma_*)\subset \mathcal T_E.\end{equation}
This inclusion was also obtained for $\gamma_*\le 1/2$ in two dimension, but the more difficult case $\gamma_*>1/2$ was left open. Our first main result is that \eqref{eqn:rbg} remains true for $d=2$, $\gamma_*>1/2$.

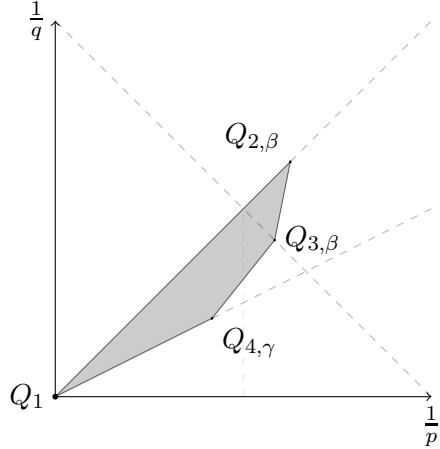
\begin{figure}[ht]
\begin{tikzpicture}[scale=5]
\def\d{2}
\def\b{.6}
\def\g{.9}
\definecoords
\drawauxlines{Q4g}{Q2}
\drawQbg
\end{tikzpicture}
\caption{The quadrangle $\mathcal{Q}(\beta,\gamma)$ for $d=2$, $\beta=0.6$, $\gamma=0.9$.}\label{fig:Qbg}
\end{figure}

We thereby get a rather satisfactory upper bound for $M_E$, which happens to be essentially sharp for so-called classes of Assouad regular sets discussed below. However, there is a slight shortcoming of this formulation which we will discuss now. Given $E$ the closure of the type set does not change if one replaces $E$ by its union with a set of zero Minkowski dimension; however
such unions may change the Assouad dimension (see \S
\ref{sec:minkzero}) and thus the set $\mathcal Q(\dim_{\mathrm{M}}\!E,\dim_{\mathrm{A}}\!E)$. To address this issue we replace
the notion of Assouad dimension with \emph{quasi-Assouad dimension}
introduced by L\"u and Xi in \cite{LX16} (see also \cite{fhhty}).

The definition involves certain intermediate fractal dimensions used in \cite{fhhty}, namely the
\emph{upper Assouad spectrum} $\theta \mapsto \overline\dim_{\mathrm{A},\theta}E$ which for given
$\theta \in [0,1]$ is defined by
\begin{align*}
\overline \dim_{\mathrm{A},\theta}E = \inf\left\{ a>0\,:\, \right. \exists\,c>0\; & \text{s.t.}\,\forall\,\delta\in (0,1),\,|I|\ge\delta^\theta,\,\\
& \left. N(E\cap I,\delta)\le c\, \delta^{-a} |I|^a\ \right\};
\end{align*}
here $I$ runs over subintervals of $[1,2]$.
The upper Assouad spectrum is a variant of the Assouad spectrum, where the condition $|I|\ge \delta^\theta$ is replaced by $|I|=\delta^\theta$. This was introduced by J. Fraser and H. Yu in \cite{fraser-yu1} (and used in \cite{ahrs} in the discussion of spherical maximal functions). The upper Assouad spectrum has the benefit that it is by definition nondecreasing in $\theta$.
One defines the quasi-Assouad dimension as the limit
\begin{equation}
\label{eqn:quasi-Assouad-def} \dim_{\qA}\!E = \lim_{\theta\to 1} \overline{\dim}_{\mathrm{A},\theta}E .\end{equation}
We remark that always $\dim_{\qA}\!E\le \dim_{\mathrm A}\!E $ and the inequality may be strict, see
\S\ref{sec:minkzero} for examples. With \eqref{eqn:quasi-Assouad-def} and $\mathcal R(\beta,\gamma)$ defined following \eqref{eqn:Qdef} we can now formulate
\begin{thm}\label{thm:upperbounds}
Let $d\ge 2$ and $E\subset [1,2]$, $\beta=\dim_{\mathrm{M}}\!E$, $\gamma=\dim_{\mathrm{qA}}\!E$. Then $\mathcal{R}(\beta,\gamma) \subset \mathcal{T}_E$.
\end{thm}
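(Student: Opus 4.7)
The plan is to deduce Theorem \ref{thm:upperbounds} from the sharper form of \eqref{eqn:rbg} with the \emph{Assouad} dimension (the ``first main result'' announced in the introduction, which extends the result of \cite{ahrs} to the missing two-dimensional range $\gamma_*>1/2$), by a localization argument that exploits the definition of the quasi-Assouad dimension via the upper Assouad spectrum. Two separate tasks then arise: (i)~prove the Assouad-dimension version $\mathcal R(\beta,\dim_{\mathrm A}E)\subset\mathcal T_E$ for all $d\ge 2$; (ii)~pass from Assouad to quasi-Assouad.

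For task (i), the case $d\ge 3$ and the easy case $d=2$, $\dim_{\mathrm A}E\le 1/2$ are already in \cite{ahrs}. The remaining range $d=2$, $\dim_{\mathrm A}E>1/2$ I would attack by combining a Littlewood--Paley decomposition of the spherical multiplier with the recent planar local smoothing / $\ell^p$-decoupling estimates for the cone: at each frequency $2^k$, one obtains $L^p\to L^q$ bounds for the single annular piece, and then sums over a $2^{-k}$-net of $E$ using the Assouad-type covering to localize on intervals in $[1,2]$. The localization/rescaling trick from \cite{ahrs} (reducing a piece supported in an interval of length $\sigma$ to a rescaled operator at frequency $2^k\sigma$) upgrades the Minkowski-dimension bound to the Assouad-dimension bound. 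This is the hardest step technically.

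Granting (i), task (ii) proceeds as follows. Fix $(1/p,1/q)$ in the interior of $\mathcal R(\beta,\gamma)$ with $\gamma=\dim_{\mathrm{qA}}\!E$. Since $\mathcal R(\beta',\gamma')$ depends continuously on its parameters and contains $\mathcal R(\beta,\gamma)$ for $(\beta',\gamma')$ slightly larger, we may fix $\beta'>\beta$ and $\gamma'>\gamma$ such that the point still lies in $\mathcal R(\beta',\gamma')$. By \eqref{eqn:quasi-Assouad-def} there exists $\theta_0\in (0,1)$ with $\overline{\dim}_{\mathrm{A},\theta}\!E\le \gamma'$ for all $\theta\ge \theta_0$. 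I would fix one such $\theta=\theta_0$.

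The key step is now a two-scale decomposition. At the dyadic frequency scale $\delta=2^{-k}$, cover $E$ by intervals $I_n$ of length $\delta^{\theta}$; by the Minkowski dimension bound there are at most $\lesssim \delta^{-\theta\beta'}$ of them, and by the upper Assouad spectrum bound $N(E\cap I_n,\delta)\lesssim \delta^{-\gamma'(1-\theta)}$. Rescale each $E_n:=E\cap I_n$ to a subset $\widetilde E_n\subset[1,2]$; the rescaled set satisfies $N(\widetilde E_n,\rho)\lesssim \rho^{-\gamma'}$ at all scales $\rho\ge \delta^{1-\theta}$, which is the full range of scales relevant for frequencies $\le 2^k$. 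Apply the Assouad-dimension bound (task (i)) to $\widetilde E_n$ at the rescaled frequency, and then use a disjointness/$\ell^q$ sum over $n$ together with the Littlewood--Paley square function to combine the pieces into a bound for $M_E f$. The main obstacle here is checking that the rescaling exponents and the local smoothing gains conspire to leave a summable exponential decay in $k$, uniformly for $(1/p,1/q)$ in a compact subset of $\mathcal R(\beta',\gamma')$; letting $\theta\to 1$ (equivalently, $\gamma'\to \gamma$) then exhausts the interior of $\mathcal R(\beta,\gamma)$, which together with the endpoint $Q_1$--$Q_{2,\beta}$ segment (handled already by the $L^p=L^q$ theory of \cite{sww1}) yields the conclusion.
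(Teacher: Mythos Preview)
Your two-task split is sensible in outline, but on both tasks you diverge from the paper and each divergence carries a gap.

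For task (i) the paper does not use decoupling or local smoothing. In the hard range $d=2$, $\gamma\ge 1/2$ it follows the bilinear Schlag--Sogge scheme: one writes $\sum_{t\in\mathcal E}\|T_t^{j,\pm}[a,f]\|_{q_4}^{q_4}$ as the $L^{q_4/2}$ norm of a bilinearized operator $\mathcal T_j$, proves a weighted $L^2$ bound for $\mathcal T_j$ (Proposition~\ref{prop:l2}), and passes to $L^{p_4}\to L^{q_4}$ by fractional integration. The new ingredient is an almost-orthogonality estimate (Proposition~\ref{prop:l2-orth}): after a second dyadic decomposition in the angle $\measuredangle(\xi,\eta)\approx 2^{-m}$, the pieces $S[F,b](\cdot,t)$ and $S[F,b](\cdot,t')$ are shown to be almost orthogonal when $|t-t'|\gg 2^{-j+2m}$, which replaces $\#\mathcal E$ by $\sup_{|I|=2^{-j+2m}}\#(\mathcal E\cap I)$, exactly the Assouad-type quantity. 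Your decoupling sketch never explains how an $L^p\to L^p$ local-smoothing bound on $[1,2]$ is converted into the $L^{p_4}\to L^{q_4}$ bound at $Q_{4,\gamma}$ with the correct dependence on the Assouad characteristic of $E$; absent that mechanism this is a genuine gap, and it is precisely the point at which the $TT^*$ argument of \cite{ahrs} breaks down for $\gamma>1/2$.

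For task (ii) the paper's passage from Assouad to quasi-Assouad is a two-line estimate, not a rescaling argument. Once one has the frequency-localized bound \eqref{eqn:Ajmain} in terms of the $\gamma$-Assouad characteristic $\chi^E_{\mathrm A,\gamma}(2^{-j})$, one bounds the characteristic directly: for $\gamma=\dim_{\mathrm{qA}}\!E$ and any $\varepsilon,\varepsilon_1>0$, the trivial bound $N(E\cap I,\delta)\le 2\delta^{-\varepsilon}$ for $\delta\le|I|\le\delta^{1-\varepsilon}$ combined with the defining bound $N(E\cap I,\delta)\le C(\varepsilon,\varepsilon_1)(\delta/|I|)^{-\gamma-\varepsilon_1}$ for $|I|\ge\delta^{1-\varepsilon}$ gives $\chi^E_{\mathrm A,\gamma}(\delta)\le 2\delta^{-\varepsilon}+C\delta^{-\varepsilon_1}$, which is absorbed into the $2^{-\varepsilon j}$ of Corollary~\ref{cor:interior}. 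Your two-scale rescaling also has a flaw: after rescaling $E\cap I_n$ to $\widetilde E_n\subset[1,2]$ you control only covering numbers at scales $\ge\delta^{1-\theta}$, not $\dim_{\mathrm A}\widetilde E_n$, so the black box from task (i) does not apply as stated. What you would actually need is a frequency-localized estimate depending only on covering numbers at the relevant scale, i.e.\ exactly \eqref{eqn:Ajmain}, at which point the rescaling is superfluous.
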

The most difficult case is $d=2$, $\gamma>\frac12$, and we will present the complete proof. In the cases $d\ge 3$ and $d=2$, $\gamma\le 1/2$ the result was essentially established in \cite{ahrs}, {\it cf.} \S\ref{sec:prelim} below for further review.

We shall now discuss the second problem mentioned above.
Modifications of well-known examples from \cite{schlag}, \cite{schlag-sogge}, \cite{sww1} (see \cite[\S 4]{ahrs} for details) show the lower bound
\begin{equation}\label{eqn:neccondition-beta}\overline{\mathcal{T}_E} \subset \mathcal{Q}(\beta,\beta)
\end{equation}
if $\beta=\dim_{\mathrm{M}}\!E$. Theorem \ref{thm:upperbounds} and \eqref{eqn:neccondition-beta} show that the set $\overline{\mathcal{T}_E}$ is a closed convex set satisfying the relation
$\mathcal{Q}(\beta, \gamma) \subset \overline{\mathcal{T}_E} \subset \mathcal{Q}(\beta, \beta)$
for $\gamma=\dim_{\qA}\!E.$
Surprisingly, this necessary condition on $\overline{\mathcal{T}_E}$ is also sufficient:
\begin{thm}\label{thm:type}
Let $\W\subset [0,1]^2$. Then

(i) $\W=\overline{\mathcal{T}_E}$ holds for some $E\subset [1,2]$ if and only if $\W$ is a closed convex set and \begin{equation}\label{eqn:Winclusion} \mathcal{Q}(\beta,\gamma)\subset \W\subset \mathcal{Q}(\beta,\beta)
\text{ for some $0\le \beta\le \gamma\le 1$. }
\end{equation}

(ii) For $\mathcal W=\overline{\mathcal T_E}$ in \eqref{eqn:Winclusion} one necessarily has $\dim_{\mathrm{M}}\!E=\beta$ and if in addition $\gamma$ is chosen minimally, then $\dim_{\qA}\!E=\gamma$.
\end{thm}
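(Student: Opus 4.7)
My plan splits the proof into three parts: the necessity half of (i), the existence (sufficiency) half of (i), and the minimality claims in (ii).

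The ``only if'' direction of (i) is essentially a packaging of results already at hand. Given $E\subset[1,2]$, set $\beta=\dim_{\mathrm M}\!E$ and $\gamma=\dim_{\qA}\!E$. Theorem \ref{thm:upperbounds} yields $\mathcal Q(\beta,\gamma)\subset\overline{\mathcal T_E}$, the Knapp-example bound \eqref{eqn:neccondition-beta} gives $\overline{\mathcal T_E}\subset\mathcal Q(\beta,\beta)$, and Riesz--Thorin interpolation applied to the sublinear operator $M_E$ shows $\mathcal T_E$, and hence its closure, is convex. This confirms \eqref{eqn:Winclusion} for $\mathcal W=\overline{\mathcal T_E}$.

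The main work is the ``if'' direction. Given closed convex $\mathcal W$ with $\mathcal Q(\beta,\gamma)\subset\mathcal W\subset\mathcal Q(\beta,\beta)$, I plan to build $E$ from quasi-Assouad regular Cantor-like pieces $F_{\gamma'}\subset[1,2]$ with $\dim_{\mathrm M}\!F_{\gamma'}=\beta$ and $\dim_{\qA}\!F_{\gamma'}=\gamma'$, for which the sharp identity $\overline{\mathcal T_{F_{\gamma'}}}=\mathcal Q(\beta,\gamma')$ holds---the upper bound from Theorem \ref{thm:upperbounds}, the matching lower bound via Knapp-type counterexamples as in \cite{ahrs}. A crucial simplification is that every point $Q_{4,\gamma'}$ lies on the single line $\tfrac{1}{p}=d\tfrac{1}{q}$, so the freedom in $\mathcal W$ is captured by the concave envelope of the segments $\overline{Q_{3,\beta}\,Q_{4,\gamma'}}$ over a uniquely determined set $S\subset[\gamma,\beta]$. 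I would then take $E=\bigcup_{\gamma'\in S_0}F_{\gamma'}$ over a countable dense $S_0\subset S$, or, to better control the multiscale structure and stability of Minkowski and Assouad-spectrum dimensions, as a single hierarchical Cantor-type construction whose upper Assouad spectrum traces out $S$.

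The hard part will be the reverse inclusion $\mathcal W\subset\overline{\mathcal T_E}$. The inclusions $F_{\gamma'}\subset E$ trivially yield the matching upper bound $\overline{\mathcal T_E}\subset\bigcap_{\gamma'\in S_0}\mathcal Q(\beta,\gamma')=\mathcal W$, but for positive bounds one has to apply Theorem \ref{thm:upperbounds} directly to $E$; a single direct application (using $\dim_{\mathrm M}\!E=\beta$, $\dim_{\qA}\!E=\gamma$) gives only the innermost piece $\mathcal Q(\beta,\gamma)\subset\overline{\mathcal T_E}$. To fill in the rest of $\mathcal W$, the plan is to refine the Fourier-analytic argument behind Theorem \ref{thm:upperbounds} so that it incorporates the full upper Assouad spectrum $\theta\mapsto\overline\dim_{\mathrm A,\theta}\!E$: each scale-ratio $\theta\in[0,1)$ for which $\overline\dim_{\mathrm A,\theta}\!E=\gamma_\theta$ should contribute additional positive estimates via the same machinery applied at scale-ratio $\theta$, and by arranging the multiscale structure of $E$ so that its spectrum traces $S$ one obtains $\overline{\mathcal T_E}\supset\mathcal W$.

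For part (ii), the equality $\dim_{\mathrm M}\!E=\beta$ follows by a two-sided comparison at the diagonal vertex: $Q_{2,\beta}\in\mathcal Q(\beta,\gamma)\subset\mathcal W=\overline{\mathcal T_E}\subset\mathcal Q(\dim_{\mathrm M}\!E,\dim_{\mathrm M}\!E)$ forces $\dim_{\mathrm M}\!E\le\beta$, while Theorem \ref{thm:upperbounds} for $E$ gives $Q_{2,\dim_{\mathrm M}\!E}\in\overline{\mathcal T_E}\subset\mathcal Q(\beta,\beta)$ and hence $\dim_{\mathrm M}\!E\ge\beta$. For minimal $\gamma$, Theorem \ref{thm:upperbounds} gives $\mathcal Q(\beta,\dim_{\qA}\!E)\subset\mathcal W$, so $\gamma\le\dim_{\qA}\!E$ by minimality. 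The reverse $\dim_{\qA}\!E\le\gamma$ requires a Knapp-type counterexample: if $\dim_{\qA}\!E>\gamma$, subsets of $E$ realizing nearly the full quasi-Assouad dimension at some scale-ratio $\theta$ close to $1$ yield test functions showing $Q_{4,\gamma''}\notin\overline{\mathcal T_E}$ for every $\gamma''<\dim_{\qA}\!E$; applied to $\gamma''=\gamma$ this contradicts $Q_{4,\gamma}\in\mathcal Q(\beta,\gamma)\subset\mathcal W=\overline{\mathcal T_E}$.
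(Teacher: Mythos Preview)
The main gap is in the construction. You propose building blocks $F_{\gamma'}$ that are all $(\beta,\gamma')$-regular with the \emph{same} Minkowski dimension $\beta$, so that $\overline{\mathcal T_{F_{\gamma'}}}=\mathcal Q(\beta,\gamma')$. But the quadrilaterals $\mathcal Q(\beta,\gamma')$ for fixed $\beta$ and varying $\gamma'$ are nested: they share the vertices $Q_1,Q_{2,\beta},Q_{3,\beta}$, and the fourth vertex $Q_{4,\gamma'}$ slides along the line $\tfrac1p=\tfrac d q$. Hence $\bigcap_{\gamma'\in S_0}\mathcal Q(\beta,\gamma')=\mathcal Q(\beta,\sup S_0)$ is again a quadrilateral, and your intersection can never produce a set $\mathcal W$ whose boundary in the critical triangle is genuinely curved. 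The ``envelope of segments $\overline{Q_{3,\beta}Q_{4,\gamma'}}$'' you describe is a pencil of segments through the single point $Q_{3,\beta}$ and carries no convex-geometry freedom. The paper's key geometric observation is that each supporting line $\ell$ of $\mathcal W$ in the critical triangle meets the segment $\overline{Q_{3,\beta}Q_{3,0}}$ at some $Q_{3,\beta_\ell}$ with $\beta_\ell\le\beta$ and the segment $\overline{Q_{4,\beta}Q_{4,\gamma}}$ at some $Q_{4,\gamma_\ell}$; thus $\ell$ is the critical edge of $\mathcal Q(\beta_\ell,\gamma_\ell)$, and one must vary \emph{both} parameters to write $\mathcal W=\bigcap_n\mathcal Q(\beta_n,\gamma_n)$.

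The second gap concerns the inclusion $\mathcal W\subset\overline{\mathcal T_E}$. Your plan to refine Theorem~\ref{thm:upperbounds} so as to read off bounds from the full Assouad spectrum of $E$ would amount to proving a substantially stronger theorem. The paper bypasses this entirely by a placement trick: each building block is rescaled into $E_n\subset[1+2^{-L(n)-1},1+2^{-L(n)}]$, so that at frequency scale $2^j$ all pieces with $n>j$ lie in an interval of length $\le 2^{-j}$ and are absorbed by a single-average estimate. For the remaining $n\le j$ one applies Corollary~\ref{cor:interior} to each $E_n$ \emph{separately}; the constant in \eqref{eqn:Ajmain} depends on $E_n$ only through $\chi^{E_n}_{\mathrm A,\gamma_n}$ and $\chi^{E_n}_{\mathrm M,\beta_n}$, which are bounded uniformly in $n$ by the construction in Lemma~\ref{lem:uniform}. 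Since $(1/p_*,1/q_*)$ is interior to every $\mathcal Q(\beta_n,\gamma_n)$ by a fixed positive distance, the decay exponent $\varepsilon$ is also uniform in $n$, and the sum over $n\le j$ costs only a harmless factor of $j$. No spectrum-based refinement of the positive result is needed.

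Your treatment of the ``only if'' direction and of part (ii) is in line with the paper's; for (ii) the paper packages the required Knapp-type test as Lemma~\ref{lem:neclemm}(ii).
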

\begin{remark}\label{rem:assouad}
In the situation of (ii), for every $\gamma_*\in [\gamma, 1]$ the set $E$ can be chosen such that $\dim_{\mathrm A}\!E=\gamma_*$, {\it cf.} \S \ref{sec:type}.
\end{remark}

Figure \ref{fig:zoom} shows a more detailed look into the critical triangle spanned by the points $Q_{4,\gamma}$, $Q_{4,\beta}$, $Q_{3,\beta}$ and illustrates in particular that the boundary of $\mathcal{T}_E$ may follow an arbitrary convex curve in this triangle.

\begin{figure}[ht]
\begin{tikzpicture}
\def\d{2}
\def\b{.4}
\def\g{1}

\def\clipPad{.05}
\def\clipLx{ \Qfourx{\g} - \clipPad }
\def\clipLy{ \Qfoury{\g} -\clipPad }
\def\clipHx{ \Qthreex{\b} + \clipPad }
\def\clipHy{ \Qthreey{\b} + \clipPad }
\def\clipStyle{solid}
\def\clipOpacity{.1}

\def\clipCx{ (\clipLx+\clipHx)/2 }
\def\clipCy{ (\clipLy+\clipHy)/2 }
\def\cliprad{ .2 }

\def\theGamma{\gamma}

\begin{scope}[scale=4]
\definecoords

\coordinate (clipLL) at ({\clipLx}, {\clipLy});
\coordinate (clipHL) at ({\clipHx}, {\clipLy});
\coordinate (clipHH) at ({\clipHx}, {\clipHy});
\fill [opacity=.05] (clipLL) rectangle (clipHH);

\drawauxlines{Q4g}{Q2}
\drawQbg
\drawQbb
\cmt{
\drawrad
\lblrad}
\drawA
\end{scope}

\begin{scope}[xshift=1cm, yshift=-2cm, scale=15]
\definecoords

\coordinate (clipLLw) at ({\clipLx}, {\clipLy});
\coordinate (clipLHw) at ({\clipLx}, {\clipHy});
\coordinate (clipHHw) at ({\clipHx+.015}, {\clipHy});
\draw [\clipStyle, opacity=\clipOpacity] (clipHL) -- (clipLLw);
\draw [\clipStyle, opacity=\clipOpacity] (clipHH) -- (clipLHw);
\draw [\clipStyle, opacity=\clipOpacity] (clipLLw) rectangle (clipHHw);
\fill [opacity=.02] (clipLLw) rectangle (clipHHw);
\clip (clipLLw) rectangle (clipHHw);

\cmt{\drawauxlines{R}{Q2}}
\drawauxlines{Q4b}{Q2}
\drawQbg
\drawQbb
\lblQbb
\cmt{
\drawrad
\lblrad}
\drawA

\end{scope}

\end{tikzpicture}
\caption{}\label{fig:zoom}
\end{figure}

The basic idea of the proof of Theorem \ref{thm:type} is to write $\W$ as an at most countable intersection $\cap_{n} \mathcal Q(\beta_n,\gamma_n)$ and to construct the set $E$ in a suitable way as a disjoint union of sets $E_n$ with the property $\overline{\mathcal T_{E_n}}=\mathcal Q(\beta_n,\gamma_n)$. In order to implement this idea one needs to understand concrete cases in which Theorem \ref{thm:upperbounds} is sharp.

If $\dim_{\mathrm{qA}}\!E=\dim_{\mathrm{M}}\!E=\beta$, then by Theorem \ref{thm:upperbounds} and \eqref{eqn:neccondition-beta} we have $\overline{\mathcal{T}_E} = \mathcal{Q}(\beta,\beta)$. This happens for example if $E$ is a self-similar Cantor set of dimension $\beta$. In particular, Theorem \ref{thm:upperbounds} is sharp up to endpoints for such $E$.
The theorem is also sharp for a class of sets $E$ with $\dim_{\mathrm{M}}\!E<\dim_{\mathrm{qA}}\!E$.
Say that a set $E\subset[1,2]$ with $\dim_{\mathrm{M}}\!E=\beta$ and $\dim_{\mathrm{qA}}\!E=\gamma$ is {\em $(\beta,\gamma)$-regular} if either $\gamma=0$, or \[\overline{\dim}_{\mathrm{A},\theta} E=\dim_{\mathrm{\qA}}\!E\;\; \text{for all}\; 1>\theta>1-\beta/\gamma .\]
A set is {\em quasi-Assouad regular} if it is $(\beta,\gamma)$-regular for some $(\beta,\gamma)$. In \cite{ahrs} we have used a slightly more restrictive definition: a set is called {\em $(\beta,\gamma)$-Assouad regular} if the above condition holds with $\dim_{\qA}$ replaced by $\dim_A$ and {\em Assouad regular} if it is $(\beta,\gamma)$-Assouad regular for some $(\beta,\gamma)$. Assouad regular sets are also quasi-Assouad regular: for every Assouad regular set of positive Minkowski dimension we have $\dim_{\qA}\!E=\dim_{\mathrm A}\!E$.
Moreover, all sets with $\dim_{\mathrm{M}}\!E=0$ are quasi-Assouad regular since the condition is voidly satisfied when $\beta=0$. When $\beta=\dim_{\mathrm{M}}\!E=\dim_{\mathrm{\qA}}\!E$ the upper Assouad spectrum is constant, so $E$ is $(\beta,\beta)$-regular.

A convex sequence $E$ which has Minkowski dimension $\beta$ is $(\beta,1)$-regular. Other examples of (quasi-)Assouad regular sets can be found in \cite[\S 5]{ahrs}, see also \S\ref{sec:assouadcon} below for a refinement needed in the proof of Theorem \ref{thm:type}. The inclusion $\overline{\mathcal{T}}_E \subset \mathcal{Q}(\beta,\gamma)$ for $(\beta,\gamma)$-Assouad regular sets was proved in \cite[\S 4]{ahrs}. Here the maximal operator is tested on characteristic function of $\delta$-neighborhoods of spherical caps which have diameter
$\approx\sqrt{\delta^{\beta/\gamma}}$;
when $\beta=\gamma$ this reduces to a standard Knapp type example. We refer to \S\ref{sec:finiteunions} for a more general result. In this context we also note that for all $E$ the type set of $M_E$ when restricted to radial functions is strictly larger than the type set of $M_E$ on general functions ({\it cf.} \cite{rsradial}).

From the necessary conditions and Theorem \ref{thm:upperbounds} we have
\begin{equation}\label{eqn:typeset-ar}\overline{\mathcal{T}_E} = \mathcal{Q}(\beta,\gamma), \quad \text{ for $(\beta,\gamma)$-regular $E$,}\end{equation}
in all dimensions $d\ge 2$.
It turns out
that an essentially sharp result can be obtained for a much larger class, namely arbitrary finite unions of quasi-Assouad regular sets
in which case the closure of the type set is a closed convex polygon.
\begin{thm}\label{thm:finiteunions}
Let $d\ge 2$ and $E=\cup_{j=1}^m E_{j}$ where $E_{j}$ is $(\beta_j,\gamma_j)$-regular. Then $\overline{\mathcal{T}_E}=\cap_{j=1}^m \mathcal{Q}(\beta_j, \gamma_j)$.
\end{thm}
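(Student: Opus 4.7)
The plan is to package Theorem \ref{thm:upperbounds} and the sharpness identity \eqref{eqn:typeset-ar} via the elementary inequalities $M_{E_j}f\le M_E f\le \sum_{j=1}^{m} M_{E_j}f$, which yield the two matching inclusions.

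For the inclusion $\overline{\mathcal{T}_E}\subset \bigcap_{j=1}^m \mathcal{Q}(\beta_j,\gamma_j)$, the first inequality (from $E_j\subset E$) gives $\mathcal{T}_E\subset \mathcal{T}_{E_j}$ for every $j$. Since each $E_j$ is $(\beta_j,\gamma_j)$-regular, the sharpness identity \eqref{eqn:typeset-ar} yields $\overline{\mathcal{T}_{E_j}}=\mathcal{Q}(\beta_j,\gamma_j)$, and intersecting over $j$ completes this half.

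For the reverse inclusion, I would fix $(1/p,1/q)$ in the interior of the closed convex polygon $\mathcal P:=\bigcap_{j=1}^m\mathcal{Q}(\beta_j,\gamma_j)$. Then $(1/p,1/q)\in \mathrm{int}\,\mathcal{Q}(\beta_j,\gamma_j)\subset \mathcal{R}(\beta_j,\gamma_j)$ for every $j$. Regularity of $E_j$ supplies $\dim_{\mathrm{M}}\!E_j=\beta_j$ and $\dim_{\qA}\!E_j=\gamma_j$, so Theorem \ref{thm:upperbounds} applied to each $E_j$ places this point in $\mathcal{T}_{E_j}$; subadditivity of the sup then gives the $L^p\to L^q$ bound for $M_E$. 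The $L^p\to L^p$ diagonal segment from $Q_1$ to $Q_{2,\beta_*}$ with $\beta_*=\max_j\beta_j$ (excluding the endpoint) is handled identically, since $\beta_*\ge \beta_j$ for all $j$ means this segment is contained in $\mathcal{R}(\beta_j,\gamma_j)$ for every $j$. Because $\mathcal P$ contains the nondegenerate region $\mathcal{Q}(1,1)$ by the monotonicity of the vertices $Q_{2,\beta},Q_{3,\beta},Q_{4,\gamma}$ in $(\beta,\gamma)$, $\mathcal P$ has nonempty interior and therefore coincides with the closure of its interior; taking closures yields $\mathcal P\subset\overline{\mathcal{T}_E}$.

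There is no substantive obstacle: the content has been absorbed into Theorem \ref{thm:upperbounds} (the new ingredient for $d=2$, $\gamma>1/2$) and into the sharpness statement \eqref{eqn:typeset-ar}. The only point requiring care is boundary bookkeeping, since $\mathcal{R}(\beta,\gamma)$ excludes the vertex $Q_{2,\beta}$: the $L^p\to L^p$ diagonal must be captured through the global maximum $\beta_*$, and the extremal vertices of $\mathcal P$ enter only after the final closure.
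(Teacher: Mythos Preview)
Your proposal is correct and follows essentially the same approach as the paper: both use the elementary inequalities $\|M_{E_j}\|_{p\to q}\le \|M_E\|_{p\to q}\le \sum_{j=1}^m \|M_{E_j}\|_{p\to q}$, invoke Theorem~\ref{thm:upperbounds} for the inclusion $\bigcap_j \mathcal{Q}(\beta_j,\gamma_j)\subset\overline{\mathcal{T}_E}$, and use \eqref{eqn:typeset-ar} for the reverse inclusion. Your treatment of the boundary bookkeeping (the diagonal segment and the closure-of-interior step via $\mathcal{Q}(1,1)\subset\mathcal{P}$) is more explicit than the paper's terse version, but this is harmless over-elaboration rather than a different argument; for the stated conclusion about $\overline{\mathcal{T}_E}$ the diagonal discussion is in fact redundant once you know $\mathrm{int}\,\mathcal{P}\subset\mathcal{T}_E$ and $\mathcal{P}=\overline{\mathrm{int}\,\mathcal{P}}$.
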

This is actually a simple consequence of Theorem \ref{thm:upperbounds} and the lower bounds, see \S \ref{sec:finiteunions}. Nevertheless, Theorem \ref{thm:finiteunions} is an essential step towards the proof of Theorem \ref{thm:type}. Moreover, Theorem \ref{thm:finiteunions} can be used to obtain certain sparse domination results on global spherical maximal functions, see \cite[\S6]{ahrs}.

It would be interesting to extend Theorem \ref{thm:finiteunions} to a wider class of sets.
Moreover, it is also worthwhile to investigate several endpoint results, {\it cf.} \S\ref{sec:endpoints} below.

\subsection*{Summary of the paper}
\begin{itemize}[leftmargin=.5cm]
\item[--]
In \S \ref{sec:prelim} we recall some previous results from \cite{sww1}, \cite{bourgain2}, \cite{ahrs} reducing the proof of Theorem \ref{thm:upperbounds} to Theorem \ref{thm:Q4} concerning the two-dimensional case with $\gamma\ge 1/2$. We also state a key ingredient, Corollary \ref{cor:interior}, for the proof of Theorem \ref{thm:type}. In \S\ref{sec:endpoints} we discuss some known and some open questions on endpoint estimates.

\item[--]
In \S \ref{sec:fracint} and \S \ref{sec:L2} we prove Theorem \ref{thm:Q4}. We use the general strategy from \cite{schlag-sogge}. Our main innovation here appears in \S \ref{sec:L2} and consists of the use of almost orthogonality arguments in conjunction with arguments based on the fractal geometry of the set $E$.

\item[--]
In \S \ref{sec:finiteunions}
we discuss a relevant necessary condition and prove Theorem \ref{thm:finiteunions}.

\item[--]
In \S \ref{sec:assouadcon} we present certain uniform constructions of (quasi-)Assouad regular sets. This is a refinement of \cite[\S 5]{ahrs}.

\item[--]
In \S \ref{sec:type} we prove Theorem \ref{thm:type}. This uses Theorem \ref{thm:upperbounds} (in the form of Corollary \ref{cor:interior}), \eqref{eqn:typeset-ar} and the construction in \S \ref{sec:assouadcon}.
\end{itemize}

\subsection*{Notation}
For a sublinear operator $T$ acting on functions on $\R^d$ we denote the $L^p\to L^q$ operator norm by $\|T\|_{p\to q}=\sup\{ \|T f\|_q\,:\,\|f\|_p=1 \}.$ Fourier transforms will be denoted by $\widehat{f}(\xi) = \int e^{-i\langle x,\xi\rangle} f(x) dx.$ Weighted $L^p$ spaces are denoted by $L^p(w)$ with $\|f\|_{L^p(w)} = \big( \int_{\R^d} |f(x)|^p w(x) dx \big)^{1/p}$. We will use $c$ to denote a positive constant that may change throughout the text and may depend on various quantities, which are either made explicit or clear from context. We write $A\lesssim B$ to denote existence of a constant $c$ such that $A\le c B$ and $A\approx B$ to denote $A\lesssim B$ and $B\lesssim A$.

\subsection*{Acknowledgement} We are grateful to the referee for a thorough reading of the paper and helpful suggestions.

\section{Setup and preliminary reductions}
\label{sec:prelim}
In this section we review and collect known facts about spherical averages from the literature. This will reduce the proof of Theorem \ref{thm:upperbounds} to the most difficult case, when $d=2$, $\gamma>\frac12$ and $(\frac1p,\frac1q)$ near $Q_{4,\gamma}$ (see Theorem \ref{thm:Q4} below).
At the same time, this review will pave the way for the proof of Theorem \ref{thm:type}, which requires a certain uniformity of various constants with respect to the set $E$.
Below we always assume $t\in [1,2]$.
\subsection{Dyadic decomposition}
Let $\chi$ be a smooth radial function on $\R^d$ supported in $\{1/2\le |\xi|\le 2\}$ such that $0\le \chi\le 1$ and $\sum_{j\in\Z} \chi(2^{-j}\xi)=1$ for every $\xi\not=0$. Set
\[ \chi_0(\xi) = 1-\sum_{j\ge 1} \chi(2^{-j} \xi),\;\text{and}\;\chi_j(\xi)=\chi(2^{-j} \xi)\quad \text{for}\;j\ge 1. \]
Next define $\A^j_t f$, $\sigma_{j,t}$ for $j\ge 0$ with
\begin{equation}\label{eqn:Ajdef}
\widehat{\A^j_t f}(\xi) = \chi_j(\xi) \widehat{\sigma}(t\xi) \widehat{f}(\xi)=\widehat {\sigma_{j,t}} (\xi)\widehat f(\xi).
\end{equation}
Then $A_t = \sum_{j\ge 0} \A^j_t$.

The symbol class $\symb^m$ is defined as the class of functions $a$ on $\R^d$ for which
\begin{equation}\label{eqn:stdsymb}
\|a\|_{\symb^m} = \max_{|\alpha| \le 10 d} \sup_{\xi\in \mathbb R^d}(1+|\xi|)^{-(m-|\alpha|)} |\partial^\alpha a (\xi)|
\end{equation}
is finite. Here $|\alpha|=\sum_{i=1}\alpha_i$ denotes the length of the multindex $\alpha\in \mathbb{N}^d_0$.
It is well--known (see \cite[Ch. VIII]{Steinbook3}) that
\begin{equation}\label{eqn:sigmaft}
\widehat{\sigma}(\xi) = \sum_{\pm} a_{\pm}(\xi) e^{\pm i |\xi|}
\end{equation}
with $a_{\pm}\in \symb^{-(d-1)/2}$.
Now \eqref{eqn:sigmaft} and Plancherel's theorem
imply
\begin{equation}\label{eqn:basicl2}
\| \A^j_t f\|_2 \lesssim 2^{-j \frac{d-1}2} \|f\|_2.
\end{equation}
The $L^1$ functions $\sigma_{j,t}$ satisfy the standard pointwise inequality
\[|\sigma_{j,t} (x)|\le C_N 2^j(1+2^j|\,|x|-t\,|)^{-N}\]
for $1\le t\le 2$ and thus we get
\begin{equation}\label{eqn:basicest1}
\|\A^j_t \|_{1\to 1}=\|\A^j_t\|_{\infty\to \infty}\lesssim 1
\end{equation}
and
\begin{equation}\label{eqn:basicest2}
\|\A^j_t \|_{1\to\infty} \lesssim 2^j .
\end{equation}
Appropriate interpolation among \eqref{eqn:basicl2}, \eqref{eqn:basicest1}, \eqref{eqn:basicest2} yields sharp estimates for the $L^p\to L^q$ operator norm of $A_t^j$
for each {\em fixed} $t$.

\subsection{Results near \texorpdfstring{$Q_1, Q_{2,\beta}, Q_{3,\beta}$}{Q1, Q2, Q3}}
We now turn our attention to the maximal operator associated with each $\A_t^j$. The uncertainty principle suggests that $|\A_t^j f(x)|$ is ``roughly constant''
as $t$ changes across an interval of length $\lesssim 2^{-j}$. Keeping in mind \eqref{eqn:basicl2}, \eqref{eqn:basicest1}, this suggests that for all $1\le p\le \infty$,
\begin{equation}\label{eqn:Mj-Lp}
\| \sup_{t\in E} |\A_t^j f| \|_p \le c N(E, 2^{-j})^{\frac1p} 2^{-j(d-1) \min(\frac1p,\frac{1}{p'})} \|f\|_p
\end{equation}
with $c$ only depending on $d$. This was proven in \cite{sww1} (also see \cite[Lemma 2.2]{ahrs}).
Observe that summing these estimates over $j\ge 0$ already gives $L^p\to L^p$ estimates for $M_E$ in the sharp range $p>1+\frac{\beta}{d-1}$ unless $d=2$ and $\beta=1$ (but this case is covered by Bourgain's circular maximal theorem \cite{bourgain2}).
In view of \eqref{eqn:basicest2}, the same argument (see \cite[Lemma 2.3]{ahrs}) also yields for $2\le q\le \infty$,
\begin{equation}\label{eqn:Mj-Lpq}
\| \sup_{t\in E} |\A_t^j f| \|_q \le c N(E, 2^{-j})^{\frac1q}
2^{-j(1-\frac{d+1}{q})} \|f\|_{q'}
\end{equation}
with $c$ only depending on $d$.
Appropriate interpolation of \eqref{eqn:Mj-Lp}, \eqref{eqn:Mj-Lpq} shows that $M_E$ is bounded $L^p\to L^q$ for every $(\frac1p,\frac1q)$ contained in the interior of the triangle with vertices $Q_1$, $Q_{2,\beta}$, $Q_{3,\beta}$ (see Figure \ref{fig:Qbg}).

\subsection{Minkowski and Assouad characteristics} \label{sec:char}
It is convenient to recast estimates involving $N(E,\delta)$ and $N(E\cap I, \delta)$ in terms of the following functions defined for $0<\delta<1$.

\begin{defn}
(i) The function
$\chi^E_{\mathrm{M},\beta}:(0,1]\to [0,\infty]$ defined by
\begin{equation}\label{eqn:mink-char}
\chi^E_{\mathrm{M},\beta}(\delta) = \delta^\beta N(E,\delta)
\end{equation}
is called the
{\it $\beta$-Minkowski characteristic} of $E$.

(ii) The function
$\chi^E_{\mathrm{A},\gamma}:(0,1]\to [0,\infty]$ defined by
\begin{equation}\label{eqn:assouad-char}
\chi^E_{\mathrm{A},\gamma}(\delta) = \sup_{\substack{ |I|\ge \delta}}\big(\tfrac{\delta}{|I|}\big)^\gamma N(E\cap I,\delta)
\end{equation}
is called the
{\it $\gamma$-Assouad characteristic} of $E$.
\end{defn}

The estimates \eqref{eqn:Mj-Lp}, \eqref{eqn:Mj-Lpq} can be rewritten as
\begin{equation}\label{eqn:Mjchar-Lp}
\| \sup_{t\in E} |\A_t^j f| \|_p \le c
[\chi^E_{\mathrm{M},\beta}(2^{-j})]^{\frac 1p} \times
\begin{cases}
2^{-j (\frac{d-1}{p'} -\frac \beta p)} \|f\|_p, &\text{ if } 1\le p\le 2,
\\
2^{-j (\frac{d-1-\beta}{p})} \|f\|_p, &\text{ if } 2\le p\le \infty,
\end{cases}
\end{equation}
\begin{equation}\label{eqn:Mjchar-Lpq}
\| \sup_{t\in E} |\A_t^j f| \|_q \le c [\chi^E_{\mathrm{M},\beta}(2^{-j})]^{\frac 1q}
2^{-j(1-\frac{d-\beta+1}{q})} \|f\|_{q'}, \quad 2\le q\le\infty.
\end{equation}

\subsection{Results near \texorpdfstring{$Q_{4,\gamma}$}{Q4}} This is the heart of the matter and here the Assouad characteristic enters.
\cmt{Let
\[ \assouadconst{E} =
\sup_{2^{-j} \le |I|\le 1} (2^{-j}/|I|)^{\gamma} N(E\cap I, 2^{-j} ). \]}
The cases $d\ge 3$ and $d=2$, $\gamma\le \frac12$ were already handled in \cite[\S 3]{ahrs}.
The analysis there is based on $TT^*$ arguments. Rewritten using \eqref{eqn:assouad-char}, it gives \begin{equation} \label{eqn:L2q}
\|\sup_{t\in E} |\A^j_t f|\|_{L^{q_\gamma,\infty}} \le c [\assouadconst {E}]^{\frac{1}{q_\gamma}} 2^{-j\frac{(d-1)^2-2\gamma}{2(d-1+2\gamma)} } \|f\|_{2},
\quad q_\gamma=\tfrac{2(d-1+2\gamma)}{d-1}.
\end{equation}
In the cases $d\ge 3$, and $d=2$, $\gamma<\frac12$ we have
$\frac{(d-1)^2-2\gamma}{2(d-1+2\gamma)} >0$
and by interpolation of
\eqref{eqn:L2q} with \eqref{eqn:basicest2} one obtains
\begin{equation}\label{eqn:ahrs}
\|\sup_{t\in E} |\A^j_t f|\|_{q_4} \le c\, [\assouadconst{E}]^{ 1/{q_4}} \|f\|_{p_4},\quad d\ge 3\;\text{or}\;d=2,\,\gamma< \tfrac12,
\end{equation}
where $c$ is a positive constants only depending on $d$ and
\[ Q_{4,\gamma}=(\tfrac1{p_4}, \tfrac1{q_4}) \]
as in \eqref{eqn:Qdef}. The remaining case is one of our main results in this paper.

\begin{thm}\label{thm:Q4} Let $d=2$ and $\gamma\ge 1/2$.
Then we have for every $j\ge 0$,
\begin{equation} \label{eqn:Q4main}
\big \|\sup_{t\in E} |\A^j_t f| \big\|_{q_4} \le c\, \min\big( \tfrac{j^{1/2}}{2\gamma-1}, j \big)^{1/p_4} {[\assouadconst{E}]}^{1/q_4} \|f\|_{p_4},
\end{equation}
where $c>0$ is an absolute constant. \cmt{check exp of $2\gamma-1$}
\end{thm}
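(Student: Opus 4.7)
The plan is to follow the general FIO-based strategy of Schlag--Sogge \cite{schlag-sogge} for the two-dimensional circular maximal operator, enriching each step with the fine counting information encoded in the $\gamma$-Assouad characteristic $\chi^E_{\mathrm{A},\gamma}(2^{-j})$. Begin by linearizing: choose a measurable selector $t:\R^2\to E$ with $\sup_{t\in E}|\A^j_t f(x)|=|\A^j_{t(x)}f(x)|$, split $\widehat{\sigma}$ by \eqref{eqn:sigmaft} into two pieces, and reduce to two FIOs with phases $\pm t|\xi|$ and symbols in $\symb^{-1/2}$. Decompose each angularly into $\sim 2^{j/2}$ plate operators $\A^{j,\pm,\kappa}_t$ whose kernels are wave packets of height $\sim 2^{j/2}$ concentrated on $(2^{-j})\times(2^{-j/2})$ plates tangent to the circle of radius $t$. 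The maximal operator thereby becomes a superposition of such plate operators selected pointwise by $t(x)$.

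The role of \S\ref{sec:fracint} is to exploit the resulting fractional integral structure: for $t$ restricted to a dyadic interval $I\subset[1,2]$ of length $2^{-\ell}$, establish an $L^{p_4}\to L^{q_4}$ bound on the localized maximal function $\sup_{t\in E\cap I}|\A^j_t f|$ with constant governed by the number of plates, which is controlled by $N(E\cap I,2^{-j})\le \chi^E_{\mathrm{A},\gamma}(2^{-j})\cdot(|I|\cdot 2^j)^\gamma$. This part parallels the classical Schlag--Sogge analysis on a single scale and does not require essentially new ideas, but it produces the correct $\gamma$-dependence on the constant via the Assouad characteristic.

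The heart of the matter is \S\ref{sec:L2}, where almost orthogonality is used to assemble the contributions of different pieces of $E$. Partition $E$ at each dyadic scale $2^{-\ell}$ for $0\le \ell\le j$, and observe that plate operators $\A^{j,\pm,\kappa}_t$ associated with $t$-values in distinct intervals at the relevant scale have nearly disjoint essential frequency supports, once the angular decomposition is carefully coordinated with the spatial scale. This yields an $L^2$ square-function-type bound for the linearized operator whose constant is a sum over scales weighted by the Assouad characteristic. Two different ways to carry out this scale sum produce the two competing factors inside the $\min$ of \eqref{eqn:Q4main}: the triangle inequality across the $O(j)$ scales yields a loss of $\lesssim j$, while Cauchy--Schwarz in the scale index yields $\sqrt{j}$ multiplied by a geometric series with ratio $\sim 2^{1-2\gamma}$, which sums to $O((2\gamma-1)^{-1})$ precisely when $\gamma>1/2$.

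Interpolating the resulting $L^2\to L^2$ bound with the $L^{p_4}\to L^{q_4}$ estimate of \S\ref{sec:fracint} then delivers \eqref{eqn:Q4main}, with the factor $[\chi^E_{\mathrm{A},\gamma}(2^{-j})]^{1/q_4}$ appearing with the correct exponent. The principal technical obstacle lies in the $L^2$ step: the almost-orthogonality must be made quantitative enough to couple cleanly with the Assouad bound, which requires tracking the angular decomposition alongside the plate-scale structure and accounting for interactions between plates coming from different intervals at the same scale. The transition at $\gamma=1/2$, where the geometric scale sum crosses from convergent to divergent, is precisely the source of the logarithmic factor that distinguishes this case from the $d\ge 3$ and $d=2,\,\gamma<1/2$ cases already handled by $TT^*$ arguments in \cite{ahrs}.
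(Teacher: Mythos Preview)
Your proposal captures the high-level intent---Schlag--Sogge strategy plus Assouad counting plus almost orthogonality---but misidentifies the mechanism at nearly every concrete step, and the outlined argument would not close. The paper does not linearize via a selector $t(x)$ nor partition $E$ into dyadic $t$-intervals of length $2^{-\ell}$. Instead it discretizes $E$ once to a uniformly $2^{-j}$-separated set $\mathcal{E}_j$ and then \emph{bilinearizes}: the $\ell^{q_4}$ sum of $\|T^{j,\pm}_t f\|_{q_4}^{q_4}$ is rewritten as $\|\mathcal{T}_j(f\otimes f)\|_{q_4/2}$ with $\mathcal{T}_j$ acting on functions on $\R^4$. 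The role of \S\ref{sec:fracint} is not to produce an $L^{p_4}\to L^{q_4}$ bound at a fixed $t$-scale; rather it reduces \eqref{eqn:Q4main} to a single \emph{weighted} $L^2$ estimate for $\mathcal{T}_j$ (Proposition~\ref{prop:l2}) with weight $w_\gamma(y,z)=|y-z|^{-(2\gamma-1)}$, and then passes from weighted $L^2$ to $L^{p_4}$ by analytic interpolation with an $L^1\to L^\infty$ bound followed by one-dimensional Hardy--Littlewood--Sobolev. Your final step, ``interpolate the $L^2\to L^2$ bound with the $L^{p_4}\to L^{q_4}$ estimate of \S\ref{sec:fracint},'' is circular as written.

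More seriously, the almost orthogonality you describe is not available: the spatial frequency support of $\A^{j,\pm,\kappa}_t$ is independent of $t$, so plate operators for different $t$-values do \emph{not} have nearly disjoint frequency supports. The actual mechanism (Proposition~\ref{prop:l2-orth}) lives on the bilinear side. After a second dyadic decomposition in the angle $\measuredangle(\xi,\eta)\approx 2^{-m}$ between the two bilinear frequency variables, one exploits oscillation of $e^{\pm i(t-t')(|\xi-\eta|+|\eta|)}$ in the radial $\eta$-direction: writing $\eta=\rho\theta$, the phase $\tau(\rho)=|\xi-\rho\theta|+\rho$ satisfies $\tau'(\rho)\approx 2^{-2m}$, so integration by parts yields decay in $2^{j-2m}|t-t'|$. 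This is what produces the factor $\sup_{|I|=2^{-j+2m}}\#(\mathcal{E}\cap I)$ and hence the Assouad characteristic. The two expressions in the $\min$ of \eqref{eqn:Q4main} arise from two ways of summing over the angular parameter $m$ together with a decomposition of $F$ according to the size of $|y-z|$---not from a multiscale partition of $E$. Without the bilinearization and the weighted $L^2$ framework, your sketch has no place for the weight $|y-z|^{-(2\gamma-1)}$, which is precisely where the parameter $\gamma$ enters the analysis and what makes the fractional integration step work.
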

The proof of this theorem is contained in \S \ref{sec:fracint} and \S \ref{sec:L2}. Interpolation arguments yield the following consequence, that implies Theorem
\ref{thm:upperbounds}.
\begin{cor}\label{cor:interior}
Let $d\ge 2$, $\beta=\dim_{\mathrm{M}}\!E$, $\gamma=\dim_{\mathrm{qA}}\! E$. Then
for every $(\frac1p,\frac1q)\in \mathcal Q(\beta,\gamma)$
there exists a nonnegative $\varepsilon=\varepsilon(\frac1p,\frac1q,\beta,\gamma,d)$ depending continuously on $(1/p, 1/q)$ such that $\varepsilon>0$ for $(\frac 1p, \frac 1q)$ in the interior of $\mathcal Q(\beta,\gamma)$ and on the open line segment between $Q_1$ and $Q_{2,\beta}$, and
\begin{equation}\label{eqn:Ajmain}
\| \sup_{t\in E} |\A_t^j f|\|_{q} \le
c\, [\assouadconst{E}]^{b_1} [\chi^E_{\mathrm{M},\beta}(2^{-j})]^{b_2}
(1+j)^{b_3}
2^{-\varepsilon j}
\|f\|_p.
\end{equation}
Here $c>0$ depends only on $d$, and the nonnegative constants $b_1, b_2, b_3$ satisfy $b_1+b_2=\frac1q$ and $b_3\le \tfrac2q$.
\end{cor}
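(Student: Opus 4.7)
The plan is to obtain \eqref{eqn:Ajmain} by multilinear Riesz-Thorin interpolation of the four vertex estimates collected in Section \ref{sec:prelim}: (i) at $Q_1=(0,0)$ the trivial bound $\|\A_t^j f\|_\infty\le\|f\|_\infty$; (ii) at $Q_{2,\beta}$ the specialization of \eqref{eqn:Mjchar-Lp} to $p=(d-1+\beta)/(d-1)$, giving $[\chi^E_{\mathrm M,\beta}(2^{-j})]^{1/p}\|f\|_p$ with no $j$-decay; (iii) at $Q_{3,\beta}$ the specialization of \eqref{eqn:Mjchar-Lpq} to $q=d-\beta+1$, giving $[\chi^E_{\mathrm M,\beta}(2^{-j})]^{1/q}\|f\|_{q'}$; and (iv) at $Q_{4,\gamma}$ the estimate from Theorem \ref{thm:Q4} (when $d=2,\gamma\ge\tfrac12$) or from \eqref{eqn:ahrs} (otherwise), giving $(1+j)^{b_3^{(4)}}[\chi^E_{\mathrm A,\gamma}(2^{-j})]^{1/q_4}\|f\|_{p_4}$ with $b_3^{(4)}=1/p_4$ only in the exceptional case. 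After the standard measurable-selection linearization of the sublinear map $f\mapsto\sup_{t\in E}|\A_t^j f|$, Riesz-Thorin interpolation at each fixed scale $j$ produces an estimate with constant depending only on $d$.

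Writing $(1/p,1/q)=\sum_i\lambda_i P_i\in\mathcal Q(\beta,\gamma)$ as a convex combination of the four vertices, interpolation yields $b_1=\lambda_4/q_4$, $b_2=\lambda_2/q_2+\lambda_3/q_3$, and $b_3=\lambda_4 b_3^{(4)}$. The identity $b_1+b_2=\sum_i\lambda_i/q_i=1/q$ is immediate from $1/q_1=0$; in the exceptional case the identity $1/p_4=2/q_4$ combined with $\lambda_4/q_4\le 1/q$ gives $b_3\le 2/q$, and otherwise $b_3=0$.

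For positivity of $\varepsilon$, I exploit the fact that \eqref{eqn:Mjchar-Lp} at a diagonal point $P_0=(1/p_0,1/p_0)$ with $p_0>(d-1+\beta)/(d-1)$ carries a strictly positive decay exponent $\varepsilon_0(p_0)$: this equals $(d-1)/p_0'-\beta/p_0$ for $p_0\le 2$ and $(d-1-\beta)/p_0$ for $p_0\ge 2$. This directly handles the open segment $Q_1$-$Q_{2,\beta}$. For $P$ in the interior of $\mathcal Q(\beta,\gamma)$ I write $P=\lambda P_0+(1-\lambda)P_1$ with $P_0$ on the open segment, $P_1$ on the complementary boundary, and $\lambda\in(0,1)$ chosen continuously in $P$; one further interpolation yields $\varepsilon=\lambda\varepsilon_0>0$ depending continuously on $(1/p,1/q)$.

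The principal bookkeeping difficulty is to organize this four-way interpolation so that the $(1+j)^{b_3^{(4)}}$ factor, the two characteristic factors, and the absolute constant all emerge with the claimed exponents. I note that the distinction between $\dim_{\mathrm A}E$ and $\dim_{\mathrm{qA}}E$ plays no role in the proof of Corollary \ref{cor:interior} itself, which is merely a pointwise bound in $\chi^E_{\mathrm A,\gamma}(2^{-j})$ at the value $\gamma=\dim_{\mathrm{qA}}E$; the advantage of quasi-Assouad appears only in the application to Theorem \ref{thm:upperbounds}, where the sub-exponential growth $\chi^E_{\mathrm A,\gamma}(2^{-j})\lesssim_\eta 2^{j\eta}$ for every $\eta>0$ (a direct consequence of \eqref{eqn:quasi-Assouad-def} via the upper Assouad spectrum) is absorbed by the $2^{-\varepsilon j}$ decay upon summation in $j$.
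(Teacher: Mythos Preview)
Your approach is essentially the paper's own: interpolate the four vertex estimates \eqref{eqn:Mjchar-Lp}, \eqref{eqn:Mjchar-Lpq}, \eqref{eqn:ahrs}, \eqref{eqn:Q4main} against a diagonal $L^{p_0}$ bound carrying positive $j$-decay. The bookkeeping for $b_1,b_2,b_3$ is correct, and your observation that the quasi-Assouad refinement is irrelevant at the level of the corollary is accurate.

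There is, however, one genuine gap. Your source of positive decay is the exponent in \eqref{eqn:Mjchar-Lp}, which you correctly compute as $(d-1)/p_0'-\beta/p_0$ for $p_0\le 2$ and $(d-1-\beta)/p_0$ for $p_0\ge 2$. When $d=2$ and $\beta=1$ both expressions vanish identically on the admissible range $p_0>2$ (and are nonpositive for $p_0\le 2$), so the diagonal estimates from \eqref{eqn:Mjchar-Lp} give \emph{no} $j$-decay whatsoever. In this case $\mathcal Q(1,1)$ is the Schlag--Sogge triangle and the open segment $Q_1Q_{2,1}$ is exactly $\{(t,t):0<t<\tfrac12\}$; your claimed $\varepsilon_0(p_0)>0$ is false there. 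The paper handles this case separately by invoking Bourgain's circular maximal theorem in the quantitative form of \cite{mss}, which furnishes a genuine $2^{-\varepsilon j}$ gain for $\sup_{t\in[1,2]}|\A_t^j f|$ in $L^p$, $p>2$. You need to insert this ingredient (or an equivalent local smoothing input) to close the argument when $d=2$, $\beta=1$.
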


\begin{proof} This follows by interpolation arguments using several extreme cases stated above (specifically, using \eqref{eqn:Mjchar-Lp}, \eqref{eqn:Mjchar-Lpq}, \eqref{eqn:ahrs}, \eqref{eqn:Q4main}). For the $L^\infty\to L^\infty$ estimate (corresponding to the pair $(0,0)=Q_1$) we have \eqref{eqn:Ajmain} with $b_1=b_2=b_3=0$ and $\varepsilon=0$.
For the pair $(p_2^{-1}, q_2^{-1})=Q_2(\beta)$ (here $p_2=q_2)$ we have \eqref{eqn:Ajmain} with
$b_1=0$, $b_2=1/q_2$, $b_3=0$ and $\varepsilon=0$.
For the pair $(p_3^{-1}, q_3^{-1})=Q_3(\beta)$ we have \eqref{eqn:Ajmain} with
$b_1=0$, $b_2=1/q_3$, $b_3=0$ and $\varepsilon=0$.
Finally we consider $(p_4^{-1}, q_4^{-1})=Q_4(\gamma)$. Now, in the case
$d=3$ or $d=2$, $\gamma<1/2$, we have
\eqref{eqn:Ajmain} with $b_1=1/q_4$, $b_2=0$, $ b_3=0$ and $\varepsilon=0$,
and in the case $d=2$, $\gamma\ge 1/2$ we have
\eqref{eqn:Ajmain} with $b_1=1/q_4$, $b_2=0$, $ b_3=2/q_4$ and $\varepsilon=0$.
We interpolate these estimates with the $L^2$ bound in
\eqref{eqn:Mjchar-Lp}, corresponding to $b_1=0$, $b_2=1/2$, $b_3=0$, $\epsilon= (d-1-\beta)/2$, except in the case $\beta=1$ and $d=2$ when we use Bourgain's result \cite{bourgain2} in the form of \cite{mss} for $p>2$.
\end{proof}

\subsection*{Proof of Theorem \ref{thm:upperbounds} }
We have $N(E\cap I,\delta) \le N(I,\delta)\le 2\delta^{-\varepsilon} $ if $\delta\le |I|\le \delta^{1-\varepsilon}$ and $0<\delta\le 1$.
Using the assumption on $\dim_{\qA}\!E$ we see that for any $\varepsilon>0$ and any $0<\varepsilon_1<1$ there are constants $C(\varepsilon, \varepsilon_1)<\infty$ such that for all intervals $I$ with $\delta\le |I|\le 1$
\[ N(E\cap I,\delta)\le \begin{cases}
C(\varepsilon, \varepsilon_1) (\delta/|I|)^{-\gamma-\varepsilon_1} &\text{ if } \delta^{1-\varepsilon} \le |I|\le 1
\\
2\delta^{-\varepsilon}
&\text{ if }\delta\le |I|\le \delta^{1-\varepsilon}.
\end{cases}
\] Thus for the $\gamma$-Assouad characteristic ({\it cf.} \S\ref{sec:char}) we get the estimate
\[\chi^E_{\mathrm{A},\gamma} (\delta) \le 2\delta^{-\varepsilon}+ C(\varepsilon,\varepsilon_1) \delta^{-\varepsilon_1},\quad 0<\delta\le 1,\] and we can conclude by applying Corollary \ref{cor:interior}. \qed

\begin{remarka} The interpolation argument above can be used to compute the exact exponent $\varepsilon$ in Corollary \ref{cor:interior} in terms of $\frac1p,\frac1q,\beta,\gamma,d$, but the exact dependence will not matter for us.
Moreover, the estimate \eqref{eqn:Q4main} is somewhat stronger than required: to prove the results stated in the introduction, it would suffice to show that for every $\epsilon_1>0$,
\[ \big \|\sup_{t\in E} |\A^j_t f| \big\|_{q_4} \lesssim_{\epsilon_1} 2^{j\epsilon_1} {[\assouadconst{E}]}^{1/q_4} \|f\|_{p_4}. \]
This weaker result together with interpolation arguments as above already implies Theorem \ref{thm:upperbounds}. The reason for stating \eqref{eqn:Ajmain} with the indicated degree of precision regarding dependence of the constant on the various parameters will become apparent in the proof of Theorem \ref{thm:type}, see \eqref{eqn:5_decay} below.
\end{remarka}

\subsection{Endpoint results and problems}\label{sec:endpoints} For the proof of Theorems \ref{thm:upperbounds} and \ref{thm:type} we do not have to consider endpoint questions. Nevertheless such endpoint bounds under the assumptions of bounded $\beta$-Minkowski characteristic or bounded $\gamma$-Assouad characteristic are very interesting, and some challenging problems are open.

We first consider the case $p=q$. Under the assumption $0<\beta<1$ it was noted in \cite[Prop. 1.4]{stw} that for the pair $Q_{2,\beta}$ the operator $M_E$ is of restricted weak type $(q_2, q_2)$, under the assumption of bounded $\beta$-Minkowski characteristic. This is proved by a version of Bourgain's interpolation argument in \cite{bourgain1}. It is conjectured (and suggested by the behavior on radial functions \cite{sww2}) that the restricted weak type estimate can be upgraded to a strong type $(q_2,q_2)$ estimate;
however this is known only for special types of sets $E$ such as convex sequences \cite{stw}, and is open for example for certain Cantor sets.

Now consider the case $p<q$. For the
full spherical maximal operator
Lee \cite{slee} proved a restricted weak type endpoint result for the exponent pairs $Q_{3,1}$ and $Q_{4,1}$, using the above mentioned Bourgain interpolation trick. This yields $L^p\to L^q$ estimates on the open edges $(Q_1,Q_{4,1})$ and $(Q_{3,1},Q_{4,1})$, moreover $L^{p,1}\to L^q$ bounds on the vertical half-open edge $(Q_{3,1},Q_{2,1}]$. The endpoint result for $Q_{4,1}$ is especially deep in two dimensions as it relies on Tao's difficult endpoint version \cite{tao-endpoint} of Wolff's bilinear adjoint restriction theorem for the cone \cite{wolff}.

The restricted weak type inequality at $Q_{3,\beta}$,
under the assumption of bounded $\beta$-Minkowski characteristic, was proved in \cite{ahrs}.
Under the assumption of bounded
$\gamma$-Assouad characteristic,
if $d\ge 3$ or $d=2$, $\gamma<1/2$ the restricted weak type estimates for $Q_{4,\gamma}$ was also proved in \cite{ahrs}. We remark that for these known restricted weak type endpoint estimates at $Q_{3,\beta}$ and $Q_{4,\gamma}$ it is open whether they can be upgraded to strong type estimates.

Endpoint bounds at $Q_{4,\gamma}$ with bounded Assouad characteristic are open in two dimensions when $1/2\le \gamma<1$.
We conjecture that the term $(1+j)^{b_3}$ in Corollary \ref{cor:interior} can be dropped; moreover that a restricted weak type estimate holds at $Q_{4,\gamma}$.

\section{Proof of Theorem \ref{thm:Q4}: Fractional integration}\label{sec:fracint}

In this section we prove Theorem \ref{thm:Q4}. For $j=0$ there is nothing to prove, so we assume $j\ge 1$ from here on. For $a\in \symb^0, t\in [1,2]$ define
\begin{equation}\label{eqn:Tjdef}
T_t^{j,\pm}[a,f] (x) = \int_{\R^2} e^{i \langle x,\xi\rangle \pm i t |\xi|} \chi_j(\xi) a(t\xi) \widehat{f}(\xi)d\xi\quad(x\in\R^2).
\end{equation}
From \eqref{eqn:Ajdef} and \eqref{eqn:sigmaft} we see that there exist symbols $a_{\pm}$ with $\|a_{\pm} \|_{\symb^{0}} \le C(d)$ such that
\[ \A^j_t f = 2^{-j/2}\sum_{\pm} T_t^{j,\pm}[a_{\pm}, f]. \]
In the following let us assume without loss of generality that
\begin{equation} \label{eqn:a-normalized}
\|a\|_{\symb^0}\le 1\quad\text{and}\quad \text{supp}\,a\subset \{ \xi\,:\, 0<\xi_1<2^{-10}\xi_2 \}.
\end{equation}
A first observation is that the effect from oscillation of the factor $e^{\pm i t|\xi|}$ in \eqref{eqn:Tjdef} is negligible if $t$ varies within an interval of length $\ll 2^{-j}$. This suggests the following standard argument.
Define
\[ I_{n,j} = [n2^{-j}, (n+1) 2^{-j}] \quad (n\in\Z),\]
\begin{equation}\label{eqn:Ejdef}
\mathcal{E}_j = \{ n 2^{-j} \,:\, I_{n,j}\cap E\not=\emptyset,\,n\in \Z\}\subset [1,2].
\end{equation}
We estimate pointwise for each $x\in\R^2$,
\[ \sup_{t\in E} |T_t^{j,\pm}[a,f]|(x) \le \Big(\sum_{n\in 2^j\mathcal{E}_j} [\sup_{t\in I_{n,j}} |T_t^{j,\pm}[a,f]|(x)]^{q_4} \Big)^{1/q_4}. \]
For every $n\in 2^j \mathcal{E}_j$ and $t\in I_{n,j}$ we use the fundamental theorem of calculus to estimate
\[ |T_t^{j,\pm}[a,f](x)| \le |T_{n2^{-j}}^{j,\pm}[a,f](x)| + \int_{0}^{2^{-j}} |T_{n2^{-j}+s}^{j,\pm}[\widetilde{a},f](x)| ds, \]
where $\widetilde{a}\in \symb^{1}$, more precisely $\widetilde{a}(\xi) = \pm i |\xi| a(\xi) + \langle \xi,\nabla a(\xi)\rangle$, and we have used that $t\ge 1$.
From the previous two displays,
\begin{equation}\label{eqn:fundthmcalc}
\|\sup_{t\in E} |T_t^{j,\pm} [a,f]| \|_{q} \le \Big( \sum_{t\in \mathcal{E}_j} \|T_t^{j,\pm} [a,f]\|_{q}^{q} \Big)^{\frac1q} + \int_0^{2^{-j}} \Big( \sum_{t\in \mathcal{E}_j+s} \|T_{t}^{j,\pm} [\widetilde{a},f]\|_q^q \Big)^{\frac1q} ds,
\end{equation}
where $q=q_4=3+2\gamma\ge 4$.
The first term on the right hand side and the integrand of the second term will be treated in the same way. Bilinearizing, we write
\begin{equation}\label{eqn:bilinearized} \Big(\sum_{t\in\mathcal{E}}\|T^{j,\pm}_t [a,f]\|^q_q\Big)^{1/q} = \| \mathcal{T}_j (f\otimes f)\|_{L^{q/2}(\R^2\times\mathcal{E})}^{1/2},
\end{equation}
where $\mathcal{E}\subset [1,2]$ is a finite set, $\R^2\times\mathcal{E}$ is equipped with the product of the Lebesgue measure and the counting measure, $(f\otimes f)(x,y) = f(x)f(y)$ and
\[ \mathcal{T}_j F(x,t) = \int_{\R^4} e^{i\langle x,\xi+\zeta\rangle\pm i t (|\xi|+|\zeta|)} \chi_j(\xi) a(t\xi) \chi_j(\zeta) a(t\zeta) \widehat{F}(\xi,\zeta)\,d(\xi,\zeta). \]

\begin{defn}
Let $\delta\in (0,1)$. A finite set $\mathcal{E}\subset (0,\infty)$ will be called {\em uniformly $\delta$--separated} if
\begin{equation}\label{eqn:unifsep}
\mathcal{E}-\mathcal{E} \subset \delta \Z.
\end{equation}
In other words, $\mathcal{E}$ is a subset of an arithmetic progression with common difference of $\delta$.
\end{defn}

The sets $\mathcal{E}_j$ and $\mathcal{E}_j+s$ appearing in \eqref{eqn:fundthmcalc} are uniformly $2^{-j}$--separated. We have a crucial $L^2$ estimate with the following weight that blows up near the diagonal:
\begin{equation}\label{eqn:wgdef}
w_\gamma(y,z) = |y-z|^{-(2\gamma-1)}\quad\text{for}\; y,z \in \R^2,\, y\not=z.
\end{equation}

\begin{prop}\label{prop:l2} Let $\gamma\ge \frac12$ and let $a\in\symb^0$ satisfy \eqref{eqn:a-normalized}.
Assume that $j\ge 1$ and that $\mathcal{E}\subset [1,2]$ is uniformly $2^{-j}$--separated. Then
\begin{equation}\label{eqn:l2-weighted}
\|\mathcal{T}_j F\|_{L^2(\R^2\times \mathcal{E})} \le c
\,\min\big( \tfrac{j^{1/2}}{2\gamma-1}, j\big) \, 2^{j} [\assouadconst{\mathcal{E}}]^{\frac 1{2}} \|F\|_{L^2(w_\gamma)}\,,
\end{equation}
where
$c$ is an absolute constant. \cas
\end{prop}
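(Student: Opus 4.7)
My plan is to apply Plancherel in $x$ to reduce to a one-dimensional problem in the Fourier conjugate variable, then exploit the fractal structure of $\mathcal{E}$ via an almost-orthogonality / oscillatory-sum argument, and finally translate the resulting Fourier-side estimate back into the physical weight $w_\gamma$.

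\textbf{Step 1 (Plancherel).} Applying Plancherel in $x$ for each fixed $t$ and substituting $\zeta = \eta - \xi$ yields
\[ \|\mathcal{T}_j F\|_{L^2(\R^2 \times \mathcal{E})}^2 = c\int_{\R^2} \sum_{t \in \mathcal{E}} |G(\eta, t)|^2\, d\eta, \]
where $G(\eta,t) = \int_{\R^2} e^{\pm it \psi_\eta(\xi)} b(t, \xi, \eta)\, \widehat{F}(\xi, \eta-\xi)\, d\xi$ with ``cone phase'' $\psi_\eta(\xi) = |\xi| + |\eta-\xi|$ and $b\in\symb^0$ localized to $|\xi|, |\eta-\xi| \sim 2^j$.

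\textbf{Step 2 (Oscillatory kernel and Assouad bound).} Expand $|G(\eta,t)|^2$ as a double integral in $\xi, \xi'$ and swap with the sum over $t$ to obtain the bilinear kernel
\[ M_\eta(\xi,\xi') = \sum_{t \in \mathcal{E}} e^{\pm it (\psi_\eta(\xi) - \psi_\eta(\xi'))} b(t,\xi,\eta)\overline{b(t,\xi',\eta)}. \]
Decompose dyadically in $|\Lambda(\xi,\xi')| := |\psi_\eta(\xi) - \psi_\eta(\xi')| \sim 2^\ell$ for $0 \le \ell \le j$. For each subinterval $I \subset [1,2]$ of length $|I| \sim 2^{-\ell}$, the Assouad hypothesis gives at most $\assouadconst{\mathcal{E}} (2^{j-\ell})^\gamma$ points of $\mathcal{E}$ in $I$, while summation by parts in $t$ across such subintervals provides Dirichlet-type cancellation. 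Combining these yields a pointwise kernel bound of the form $|M_\eta(\xi,\xi')| \lesssim \assouadconst{\mathcal{E}} \cdot 2^j \cdot (1+|\Lambda|)^{-\gamma}$.

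\textbf{Step 3 (Physical-space weight and logarithmic summation).} Substitute the kernel bound into the bilinear form and integrate in $\eta$. Using that the level sets $\{\psi_\eta = \text{const}\}$ are ellipses with foci $0, \eta$, a change of variables converts the Fourier-side kernel $(1+|\Lambda|)^{-\gamma}$ (after integration in $\eta$) into a Riesz-type kernel acting on the difference variable $y-z\in\R^2$ with exponent $2\gamma-1$; this reproduces the physical weight $w_\gamma(y,z) = |y-z|^{-(2\gamma-1)}$ by Hardy--Littlewood--Sobolev. The factor $\min(j^{1/2}/(2\gamma-1), j)$ arises from summing the $O(j)$ dyadic scales of $|\Lambda|$: for $\gamma > 1/2$ bounded away from $1/2$ the geometric series $\sum_\ell 2^{-\ell(2\gamma-1)}$ contributes $1/(2\gamma-1)$, and an additional $j^{1/2}$ comes from $\ell^2$ almost-orthogonality between scales; for $\gamma$ close to $1/2$ the series fails to converge uniformly and we cap the loss at $j$ via the trivial scale-by-scale bound.

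\textbf{Main obstacle.} The crux is Step 2, where one must simultaneously extract the correct dependence on $\assouadconst{\mathcal{E}}$ and the sharp decay $(1+|\Lambda|)^{-\gamma}$—the latter being what produces the physical weight $w_\gamma$ after Step 3. Achieving the $j^{1/2}$ rate (rather than $j$) requires genuine Cotlar--Stein-style almost orthogonality between the dyadic pieces of $M_\eta$, not merely triangle inequality; this is the novel interplay between the fractal geometry of $\mathcal{E}$ and the cone phase $\psi_\eta$ which goes beyond the classical Schlag--Sogge framework and accounts for the authors' emphasis on this being the main innovation of Section 4.
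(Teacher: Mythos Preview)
Your Step 2 pointwise bound is false, and this is a genuine gap that breaks the argument. The exponential sum $\sum_{t\in\mathcal{E}} e^{it\Lambda}$ over a general uniformly $2^{-j}$--separated set admits no pointwise decay of the form $(1+|\Lambda|)^{-\gamma}$, even after multiplying by $2^j$ and the Assouad characteristic. Concretely, take $\gamma=1$ (so $\assouadconst{\mathcal{E}}\le 2$ automatically), let $\mathcal{E}=\{1+2k\cdot 2^{-j}:0\le k<2^{j-1}\}$, and set $\Lambda=\pi 2^{j}$; then each term $e^{it\Lambda}$ has the same sign and $|M_\eta|\approx 2^{j-1}$, whereas your claimed bound gives $O(1)$. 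Abel summation does not rescue this: the partial sums $\sum_{t\in\mathcal{E},\,t\le T}e^{it\Lambda}$ have no useful bound for arbitrary subsets of an arithmetic progression. The Assouad characteristic is a counting hypothesis, and you cannot convert it into cancellation in an exponential sum at the pointwise level.

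The paper's proof is organized orthogonally to yours. After Plancherel it introduces an \emph{angular} decomposition $b_j=\sum_{0<m<j/2} b_{j,m}+\text{remainder}$, localizing $\measuredangle(\xi,\eta)\approx 2^{-m}$. For each $m$ it expands the square in $t,t'$ (not in the inner frequency variables) and integrates by parts in the radial direction of $\eta$; the angular localization forces the radial derivative of the phase to be $\approx 2^{-2m}$, yielding $(1+2^{j-2m}|t-t'|)^{-2}$ decay. Summing over $t,t'$ then uses only the \emph{counting} bound $\#(\mathcal{E}\cap I)$ for $|I|=2^{2m-j}$, which is exactly what the Assouad characteristic controls---no cancellation in $t$ is ever invoked. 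The weight $w_\gamma$ enters not through HLS on a Fourier kernel but via a separate physical-space decomposition $F=\sum_k F_k$ with $|y-z|\approx 2^k$, combined with an off-diagonal estimate (integration by parts in $\eta$ using that the gradient of the phase is bounded below when $|y-z|\gg 2^{-m}$). The factor $j^{1/2}$ comes from Cauchy--Schwarz over the $O(j)$ angular scales $m$ together with the $\ell^2$-orthogonality of the disjointly supported $F_k$, and the $(2\gamma-1)^{-1}$ from summing $\sum_\ell \min(2^{-\ell/2},2^{\ell(\gamma-1/2)})$. Your Step 3 sketch does not supply a mechanism that would replace either of these ingredients.
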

The proof of this estimate forms the heart of the matter and is contained in Section \ref{sec:L2}.

\begin{proof}[Proof of Theorem \ref{thm:Q4} given Proposition \ref{prop:l2}] We assume that $\|a\|_{\symb^0}\le 1$.
Since $q_4=2p_4=3+2\gamma$, we need to show, by \eqref{eqn:fundthmcalc} and \eqref{eqn:bilinearized}, that
\begin{equation}\label{eqn:lq-weighted}
\|\mathcal{T}_j (f\otimes f)\|_{L^{p_4}(\R^2\times \mathcal{E})} \le c
\,\min\big( \tfrac{j^{1/2}}{2\gamma-1}, j\big)^{2/p_4} \, 2^{j} [\assouadconst{\mathcal{E}}]^{1/p_4} \|f\|_{p_4}^2\,.
\end{equation}
The case $\gamma=1/2$ follows immediately from Proposition \ref{prop:l2}.
We assume $1/2<\gamma\le 1$ and argue as in the paper by Schlag and Sogge \cite{schlag-sogge}.
The sectorial localization in \eqref{eqn:a-normalized} allows us to estimate \cas \cjr
\begin{equation} \label{L1Linfty}
\sup_{(x,t)\in \R^2\times [1,2]} |\mathcal{T}_j F(x,t)| \lesssim 2^j
\int_{\R^2} \sup_{(y_2,z_2)\in\R^2} |F(y,z)| d(y_1,z_1).
\end{equation}
Indeed, we have the pointwise bound, for $1\le t\le 2$,
\begin{equation}\label{eqn:Linftyest} | \mathcal{T}_j F(x,t)| \lesssim\int_{\R^4} K_{j,t} (x-y, x-z) |F(y,z)|\, d(y,z)
\end{equation}
\[
\le \Big(\sup_{(y_1,z_1)\in\R^2} \int_{\R^2} |K_{j,t} (y,z)| d(y_2, z_2) \Big) \cdot \Big(\int_{\R^2} \sup_{(y_2,z_2)\in\R^2} |F(y,z)| d(y_1,z_1)\Big)
\]
where, with $h(s)=\sqrt{1-s^2}$, $K_{j,t}(y,z)$ is a linear combination of five terms
\begin{multline*}
\frac{2^j }{(1+t^{-1}(|y|+|z|))^{10}},\quad 2^j \bbone_{[-1/2,1/2]^2}(y_1,z_1) \times \\
\frac{ 2^j}{(1+2^j|t^{-1}y_2\pm h(t^{-1}y_1)|)^{10}}\frac{ 2^j}{(1+2^j|t^{-1}z_2\pm h(t^{-1} z_1)|)^{10}}.
\end{multline*} In \eqref{eqn:Linftyest} we integrate in $(y_2,z_2)$ first to obtain \eqref{L1Linfty}.
In \eqref{eqn:l2-weighted} we may replace $ |y-z|^{-\frac{2\gamma-1}{2}} $ with
$|y_1-z_1|^{-\frac{2\gamma-1}{2}} $ and then analytically interpolate the resulting inequality with \eqref{L1Linfty}. This yields for $2\le p< \infty$,
\begin{multline}
\|\mathcal{T}_j F\|_{L^p(\R^2\times \mathcal{E}_j)}
\lesssim 2^j
\min\big( \tfrac{j^{1/2}}{2\gamma-1}, j\big)^{2/p}
[\assouadconst{E}]^{1/p}\times\\
\Big(\int_{\R^2} \Big[ |y_1-z_1|^{(1-2\gamma)/{p} } \Big(\int_{\R^2} |F(y,z) |^{p} d(y_2, z_2)\Big)^{1/p} \Big]^{p'} d(y_1, z_1)\Big)^{1/p'}.
\end{multline}
We specialize to $F(y,z)= f(y) g(z)$ and apply H\"older's inequality in $y_1$ with exponents $p/p'\in [1,\infty]$ and $(p/p')'$ to obtain
\begin{align}
&\Big(\int_{\R^2} \Big[ |y_1-z_1|^{(1-2\gamma)/{p} } \Big(\int_{\R^2} |f(y)g(z)|^{p} d(y_2, z_2)\Big)^{1/p} \Big]^{p'} d(y_1, z_1)\Big)^{1/p'}
\notag
\\&=
\Big(\int_{\R^2} \big[ |y_1-z_1|^{\frac {1-2\gamma}{p} } \|f(y_1,\cdot)\|_{p}
\|g(z_1,\cdot)\|_{p} \big]^{p'}\,d(y_1, z_1)\Big)^{1/p'}
\notag
\\
\label{fractint}
&\le \|f\|_p \Big(\int_\R\Big[\int_\R |y_1-z_1|^{- (2\gamma-1)\frac{p'}{p}}
\|g(z_1,\cdot)\|_p^{p'} dz_1\Big] ^{(\frac p{p'})' }dy_1\Big)^{\frac{1}{(p/p')'}\frac{1}{p'}}.
\end{align}
The standard fractional integral theorem says that for $0<\mathrm{Re}(a)\le 1$ the convolution operator with Schwartz kernel
$|s-t|^{a-1}$ maps $L^{\mathfrak{p}}(\R)$ to $L^\mathfrak{q}(\R)$ for $\mathfrak{p}^{-1}-\mathfrak{q}^{-1}=\mathrm{Re}(a)$, and using analytic interpolation with the trivial $L^1\to L^\infty$ estimate when $\mathrm{Re}(a)=1$ one notes that the operator norm is bounded as $\mathrm{Re}(a)\to 1$.
For $p>2$ and $1/2<\gamma\le 1$ the expression
$1-(2\gamma-1) {p'}/{p}$ belongs to $(0,1-p'/p)$. Thus, if $p>2$ and $1< r< (p/p')'$ is defined by
\begin{equation}\label{rdefinition} \tfrac 1r-\big(1-\tfrac{p'}{p}\big)= 1-(2\gamma-1) \tfrac{p'}{p}\,=: a
\end{equation} then we
see that \eqref{fractint} is bounded by
\[ C\|f\|_p \Big(\int_\R\|g(z_1,\cdot)\|_{p}^{p' r} dz_1\Big)^{\frac 1{rp'} } .
\] where the constant is independent of $\gamma\in (1/2,1]$.
For the special case $r=p/p'$ the relation \eqref{rdefinition} gives $p=(3+2\gamma)/2=p_4$ (which is $>2$ since $\gamma>1/2$), and we get \eqref{eqn:lq-weighted} by setting $f=g$.
\end{proof}

\section{Proof of Proposition \ref{prop:l2}: An \texorpdfstring{$L^2$}{L2} estimate}\label{sec:L2}
In this section we prove the crucial $L^2$ estimate, Proposition \ref{prop:l2}.
As in Schlag--Sogge \cite{schlag-sogge}, the key to this estimate will be a second dyadic decomposition in the angle $\measuredangle(\xi,\eta)\in [0,\pi]$ between certain frequency variables $\xi$ and $\eta$.
For the estimates in \cite{schlag-sogge} the authors relied on space time estimates due to Klainerman and Machedon \cite{klainerman-machedon} which are not applicable in our setting. Instead we have to establish an orthogonality property between contributions from different values of $t$ which also depends on the fractal geometry of $E$.

We find it convenient to introduce the notation
\begin{equation}\label{eqn:def-S}
S[F,b](\xi,t)= \int_{\R^2} e^{\pm i t(|\xi-\eta|+|\eta|)} b(t,\xi,\eta)
\widehat F(\xi-\eta,\eta) d\eta,
\end{equation}
acting on a function $F:\R^2\times\R^2\to\C$ and a symbol $b:(0,\infty)\times \R^2\times \R^2\to \C$.
For positive integers $j,m$ we set $a_j(t,\xi)=a(t\xi)\chi_j(\xi)$ and
\[ b_{j}(t,\xi,\eta) = a_j(t,\xi-\eta)a_j(t,\eta), \]
\begin{equation}\label{eqn:l2-defbjm}
b_{j,m}(t,\xi,\eta) = b_{j}(t,\xi,\eta) \chi(2^{m-2j} \det(\xi,\eta)).
\end{equation}
Define the convex annular sector
\[\Theta_{j} = \{\omega\in\R^2:0<\omega_1< 2^{-10}\omega_2,\, 2^{j-1}\le |\omega|\le 2^{j+1}\}.\]
If $b_j(t,\xi,\eta)\not=0$, then both $\eta$ and $\xi-\eta$ lie in $\Theta_{j}$ (see \eqref{eqn:a-normalized}).
By convexity, we also have $\frac12\xi= \frac12\eta + \frac12(\xi-\eta)\in \Theta_{j}$.
Observe that the cutoff in \eqref{eqn:l2-defbjm} effects an angular localization, since
\begin{equation}\label{eqn:l2-detangle}
|\det(\xi,\eta)| = |\xi|\cdot |\eta|\cdot \sin(\measuredangle(\xi,\eta)).
\end{equation}
Note that
\[ \mathcal{T}_j F(x,t) = \int_{\R^2} S[F,b_j](\xi, t) e^{i\langle \xi,x\rangle} d\xi. \]
Hence by Plancherel's theorem, Proposition \ref{prop:l2} follows if we can show
\begin{equation}\label{eqn:l2-penult}
\|S[F,b_j]\|_{L^2(\R^2\times\mathcal{E})} \le c
\,\min\big( \tfrac{j^{1/2}}{2\gamma-1}, j\big) [\assouadconst{\mathcal{E}}]^{\frac 1{2}} 2^j \|F\|_{L^2(w_\gamma)}.
\end{equation}
We will require three different estimates for the objects $S[F,b]$ to prove this estimate. To clarify various dependencies we introduce the following terminology.

\begin{defn}
Let $j,m\ge 1$.
We say that the symbol $b$ is {\it $(j,m)$-adapted} if $b$ is smooth and
$b(t,\xi,\eta)=0$ unless
\begin{equation}\label{eqn:l2-defadapted}
\eta, \xi-\eta \in \Theta_{j}\quad\text{and}\quad \measuredangle(\xi, \eta) \le 2^{-m+5}.
\end{equation}
We call $b$ \emph{strictly $(j,m)$-adapted} if in addition $b(t,\xi,\eta)=0$ unless
\[\measuredangle(\xi,\eta)\ge 2^{-m-5}.\]
\end{defn}

Observe that $b_{j,m}$ is strictly $(j,m)$-adapted.
\begin{prop}[Trivial estimate]\label{prop:l2-triv}
If $b$ is $(j,m)$-adapted, then for every finite $\mathcal{E}\subset [1,2]$,
\begin{equation}\label{eqn:l2-triv}
\| S[F,b] \|_{L^2(\R^2\times\mathcal{E})} \le c\, \|b\|_\infty 2^{j-\frac{m}2} (\# \mathcal{E})^{\frac12} \|F\|_{L^2(\R^4)}.
\end{equation}
\end{prop}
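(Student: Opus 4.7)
The proof should be a direct Cauchy--Schwarz argument for each fixed $t\in\mathcal{E}$, followed by summation over $\mathcal{E}$, the point being that the angular localization shrinks the support in $\eta$.

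First, I would fix $t\in [1,2]$ and estimate $\|S[F,b](\cdot, t)\|_{L^2_\xi(\R^2)}$. For each fixed $\xi$, the integrand in \eqref{eqn:def-S} is supported, by the $(j,m)$-adapted hypothesis \eqref{eqn:l2-defadapted}, in the region
\[
\Omega_{j,m}(\xi) = \bigl\{ \eta \in \R^2 : |\eta|\in [2^{j-1},2^{j+1}],\ \measuredangle(\xi,\eta)\le 2^{-m+5}\bigr\},
\]
an annular sector of radial width $\approx 2^j$ and angular width $\approx 2^{-m}$, so $|\Omega_{j,m}(\xi)| \lesssim 2^{2j-m}$. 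Cauchy--Schwarz in $\eta$ together with $|b|\le \|b\|_\infty$ yields
\[
|S[F,b](\xi,t)|^2 \le \|b\|_\infty^2\, |\Omega_{j,m}(\xi)| \int_{\Omega_{j,m}(\xi)} |\widehat F(\xi-\eta,\eta)|^2 \,d\eta \lesssim \|b\|_\infty^2\, 2^{2j-m} \int_{\R^2}|\widehat F(\xi-\eta,\eta)|^2 \,d\eta.
\]

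Next, I would integrate in $\xi$ and apply Fubini together with the linear change of variables $(u,\eta) = (\xi-\eta,\eta)$, which is measure--preserving:
\[
\int_{\R^2} |S[F,b](\xi,t)|^2\, d\xi \lesssim \|b\|_\infty^2\, 2^{2j-m} \int_{\R^2}\!\!\int_{\R^2} |\widehat F(u,\eta)|^2\, d\eta\, du = \|b\|_\infty^2\, 2^{2j-m}\,\|\widehat F\|_{L^2(\R^4)}^2.
\]
By Plancherel's theorem on $\R^4$ this equals $\|b\|_\infty^2\, 2^{2j-m}\,\|F\|_{L^2(\R^4)}^2$, so $\|S[F,b](\cdot,t)\|_{L^2(\R^2)}\lesssim \|b\|_\infty\, 2^{j-m/2}\,\|F\|_{L^2(\R^4)}$ uniformly in $t\in[1,2]$.

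Finally, since $\mathcal{E}$ is finite and endowed with counting measure, summing the square of the previous bound over $t\in\mathcal{E}$ produces the factor $(\#\mathcal{E})^{1/2}$ and gives \eqref{eqn:l2-triv}. The whole argument is orthogonality--free in $t$, which is exactly why it is labeled ``trivial'' and why it will need to be improved, via a genuine almost--orthogonality analysis exploiting the fractal geometry of $\mathcal{E}$, in the non--trivial bound that drives Proposition \ref{prop:l2}; there is no real obstacle in the present step beyond correctly accounting for $|\Omega_{j,m}(\xi)|$.
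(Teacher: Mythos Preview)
Your argument is correct and is precisely the Cauchy--Schwarz-in-$\eta$ computation the paper is alluding to; the paper simply omits the details you have filled in. Your observation that the $(j,m)$-adapted support forces $\eta$ into a sector of measure $\lesssim 2^{2j-m}$ is the only substantive point, and your subsequent change of variables and Plancherel step are routine and correct.
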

\begin{proof}
This follows by an application of the Cauchy--Schwarz inequality to the integration over $\eta$ in \eqref{eqn:def-S}.
\end{proof}
The next ingredient is the following crucial improvement of \eqref{eqn:l2-triv}.
\begin{prop}[Almost orthogonality]
\label{prop:l2-orth}
Suppose that $b$ is strictly $(j,m)$-adapted and satisfies the differential inequality
\begin{equation}\label{eqn:l2-angular-symb}
| \langle \tfrac{\eta}{|\eta|}, \nabla\rangle^N b(t,\xi,\eta)| \le B\, 2^{-jN}\;\text{for}\;N=0,1,2.
\end{equation}
Suppse $\mathcal{E}$ is uniformly $2^{-j}$--separated. Then
\begin{equation}\label{eqn:l2-orth}
\| S[F,b] \|_{L^2(\R^2\times\mathcal{E})} \le c\, B\, 2^{j-\frac{m}2} \big(\sup_{|I|=2^{-j+2m}} \#(\mathcal{E}\cap I) \big)^{\frac12} \|F\|_{L^2(\R^4)}.
\end{equation}
\end{prop}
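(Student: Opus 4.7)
The plan is to prove \eqref{eqn:l2-orth} via a $TT^*$ argument. Define the auxiliary operator $T : L^2(\R^4) \to L^2(\R^2\times\mathcal{E})$ by
\[ TG(\xi,t) = \int_{\R^2} e^{\pm i t \psi(\xi,\eta)} b(t,\xi,\eta)\, G(\xi-\eta,\eta)\,d\eta,\quad \psi(\xi,\eta) = |\xi-\eta|+|\eta|, \]
so that $S[F,b] = T\widehat{F}$ and \eqref{eqn:l2-orth} is equivalent (after Plancherel) to the operator bound $\|T\|\le cB\,2^{j-m/2}(\sup_I\#(\mathcal{E}\cap I))^{1/2}$. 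A direct computation of $\langle TG,TH\rangle$, after the change of variable $(\xi,\eta)\mapsto(\omega_1,\omega_2)=(\xi-\eta,\eta)$, shows that $TT^*$ acts diagonally in $\xi$:
\[ (TT^*g)(\xi,t) = \sum_{t'\in\mathcal{E}} \widetilde{K}(\xi,t,t')\,g(\xi,t'),\quad \widetilde{K}(\xi,t,t') = \int_{\R^2} e^{\pm i(t-t')\psi(\xi,\eta)} b(t,\xi,\eta)\overline{b(t',\xi,\eta)}\,d\eta. \]
Applying Schur's test on $\ell^2(\mathcal{E})$ fiberwise in $\xi$, it suffices to show that $\sup_{\xi,t}\sum_{t'\in\mathcal{E}} |\widetilde{K}(\xi,t,t')| \lesssim B^2\, 2^{2j-m}\sup_{|I|=2^{-j+2m}}\#(\mathcal{E}\cap I)$.

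Next I would estimate $\widetilde{K}$ by nonstationary phase in the radial $\eta$-direction. Writing $\eta=\rho\omega$ with $\omega\in S^1$, the key identity is
\[ \partial_\rho\psi(\xi,\rho\omega) = 1 - \omega\cdot\widehat{\xi-\rho\omega} = 1 - \cos\bigl(\measuredangle(\eta,\xi-\eta)\bigr). \]
Strict $(j,m)$-adaptedness forces $\measuredangle(\xi,\eta)\sim 2^{-m}$; combined with $\eta,\xi-\eta\in\Theta_j$ and the resulting constraint $\bigl||\eta|-|\xi|\bigr|\sim 2^j$ from the sector geometry, a short calculation yields $\measuredangle(\eta,\xi-\eta)\sim 2^{-m}$, hence $\partial_\rho\psi\sim 2^{-2m}$ and $\partial_\rho^2\psi\sim 2^{-j-2m}$ uniformly on the $\eta$-support of $b(t,\xi,\cdot)\overline{b(t',\xi,\cdot)}$. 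The hypothesis \eqref{eqn:l2-angular-symb} is precisely a bound on iterated radial derivatives $(\omega\cdot\nabla_\eta)^N b$ by $B\,2^{-jN}$ for $N=0,1,2$, so two integrations by parts in $\rho$ give, for any $N\in\{0,1,2\}$,
\[ |\widetilde{K}(\xi,t,t')| \lesssim B^2\, 2^{2j-m}\,\min\bigl(1,\,(|t-t'|\cdot 2^{j-2m})^{-N}\bigr). \]
Here $2^{2j-m}$ is the trivial bound (radial length $\sim 2^j$ times angular aperture $\sim 2^{-m}$) and $(|t-t'|\cdot 2^{j-2m})^{-1}$ is the reciprocal of the total phase variation across this support.

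To sum over $t'$, I would exploit the uniform separation $\mathcal{E}-\mathcal{E}\subset 2^{-j}\Z$: any interval of length $2^{-j+2m+k}$ is a union of $2^k$ intervals of length $2^{-j+2m}$ and hence contains at most $2^k\sup_{|I|=2^{-j+2m}}\#(\mathcal{E}\cap I)$ points of $\mathcal{E}$. Partitioning $\sum_{t'}|\widetilde{K}|$ according to $|t-t'|\le 2^{-j+2m}$ (the term $k=0$) and $|t-t'|\in(2^{-j+2m+k-1},2^{-j+2m+k}]$ for $k\ge 1$, and taking $N=2$ in the integration-by-parts estimate, the contribution from shell $k$ is at most $B^2\,2^{2j-m}\cdot 2^{-2k}\cdot 2^k\sup_I\#(\mathcal{E}\cap I)$; the geometric series $\sum_{k\ge 0}2^{-k}$ converges to give the Schur bound. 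Then $\|T\|^2=\|TT^*\|\lesssim B^2\,2^{2j-m}\sup_I\#(\mathcal{E}\cap I)$, which is \eqref{eqn:l2-orth}.

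The main obstacle is the phase analysis under the restricted hypothesis on $b$. Although $|\nabla_\eta\psi|\sim 2^{-m}$ in the tangential direction, which would give a sharper gain per integration by parts, \eqref{eqn:l2-angular-symb} controls only radial derivatives of $b$, so we are forced to work with the smaller radial phase derivative $\partial_\rho\psi\sim 2^{-2m}$. The resulting threshold $|t-t'|\gtrsim 2^{-j+2m}$ for meaningful decay is exactly the almost-orthogonality scale appearing on the right-hand side of \eqref{eqn:l2-orth}.
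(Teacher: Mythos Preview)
Your proposal is correct and follows essentially the same approach as the paper: the paper's proof dualizes, applies Cauchy--Schwarz, and expands the square, which is precisely your $TT^*$ computation unwound; the radial integration-by-parts estimate and the resulting almost-orthogonality scale $2^{-j+2m}$ are identical. The only differences are packaging---the paper first reduces to the case where $\mathcal{E}$ is uniformly $2^{-j+2m}$-separated and then sums a convergent series over $\mathcal{E}-\mathcal{E}$, whereas you keep the original $\mathcal{E}$ and count points in dyadic shells directly; your route is slightly more streamlined and in fact does not use the hypothesis $\mathcal{E}-\mathcal{E}\subset 2^{-j}\Z$ at all.

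One small correction: the intermediate claim ``$\bigl||\eta|-|\xi|\bigr|\sim 2^j$'' is not correct (this quantity can vanish), and it is not what is needed. The angle conclusion $\measuredangle(\eta,\xi-\eta)\sim 2^{-m}$ follows instead from the identity $\det(\xi,\eta)=\det(\xi-\eta,\eta)$, which gives $|\xi|\sin\measuredangle(\xi,\eta)=|\xi-\eta|\sin\measuredangle(\xi-\eta,\eta)$, together with $|\xi|\sim|\xi-\eta|\sim 2^j$ (the former because $\tfrac12\xi\in\Theta_j$). With this in place your phase bounds $\partial_\rho\psi\sim 2^{-2m}$ and $|\partial_\rho^2\psi|\lesssim 2^{-j-2m}$ are exactly those derived in the paper, and the rest of the argument goes through.
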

The proof of this proposition is contained in \S \ref{sec:mainorth}. It relies on an observation of almost orthogonality of $S[F,b](\cdot,t)$ and $S[F,b](\cdot,t')$ for sufficiently separated $t,t'$.
Finally we will need a certain tail estimate.
\begin{prop}[Off--diagonal decay]
\label{prop:l2-offdiag}
Let $0\le m\le j/2$. Suppose that $b$ is $(j,m)$-adapted and satisfies the differential inequality
\begin{equation}\label{eqn:l2-bad-symb}
|\partial_\eta^\alpha b(t,\xi,\eta)| \le B\, 2^{-(j-m)|\alpha|}\;\text{for}\;|\alpha|\le 9.
\end{equation}
Assume that $F$ is supported on the set
\[ \{ (y,z)\,:\,|y-z|\ge 2^{-m+\ell+20} \} \]
for some $\ell\ge 0$. Then for every finite $\mathcal{E}\subset [1,2]$, \begin{equation}\label{eqn:l2-offdiag}
\| S[F,b] \|_{L^2(\R^2\times\mathcal{E})} \le c\, B\, 2^{\frac32 m-\ell} (\mathcal{\# E})^{\frac12}\|F\|_{L^2(\R^4)}.
\end{equation}
\end{prop}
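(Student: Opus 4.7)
The strategy is to exploit non-stationarity in the $\eta$-integration defining $S[F,b]$: on the support of $b$, the $\eta$-gradient $|\nabla_\eta(\pm|\xi-\eta|\pm|\eta|)|\lesssim 2^{-m}$ is much smaller than $R:=2^{-m+\ell+20}\le|y-z|$, so the total $\eta$-phase gradient is dominated by $y-z$. I will first reduce the $L^2$ bound to a scalar tail estimate via a change of variables and Plancherel, then prove the tail estimate by iterated integration by parts.

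For the reduction, set $s=(y+z)/2$, $w=y-z$ and $\tilde F(s,w)=F(s+w/2,s-w/2)$. A direct computation shows that the kernel of $F\mapsto S[F,b](\cdot,t)$ factorizes as
\[ K_t(\xi,s+w/2,s-w/2)=e^{-i\xi\cdot s}\,\tilde K_t^0(\xi,w), \qquad \tilde K_t^0(\xi,w)=\int b(t,\xi,\eta)\,e^{-i(\xi/2-\eta)\cdot w\pm it(|\xi-\eta|+|\eta|)}\,d\eta, \]
whence $S[F,b](\xi,t)=\int \tilde K_t^0(\xi,w)\,\widehat{\tilde F}_s(\xi,w)\,dw$, with $\widehat{\tilde F}_s$ the Fourier transform in $s$. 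The support hypothesis on $F$ restricts the $w$-integration to $|w|\ge R$, and Cauchy--Schwarz in $w$ followed by Plancherel in $s$ (then summing trivially over $t\in\mathcal E$) reduces the claim to the estimate
\[ \sup_{\xi,t}\int_{|w|\ge R}|\tilde K_t^0(\xi,w)|^2\,dw \lesssim B^2\,2^{3m-2\ell}. \]

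For this remaining bound I substitute $\zeta=\xi/2-\eta$ and recognize $\tilde K_t^0(\xi,w)=\widehat\beta(w)$ with $\beta(\zeta):=b(t,\xi,\xi/2-\zeta)\,e^{it\tilde\phi(\xi,\zeta)}$ and $\tilde\phi=\pm(|\xi/2+\zeta|+|\xi/2-\zeta|)$. From the angular support constraint one verifies $|\nabla_\zeta\tilde\phi|\lesssim 2^{-m}$, with $|\partial_\zeta^l\tilde\phi|\lesssim 2^{-j(l-1)}$ for $l\ge 2$; combined with \eqref{eqn:l2-bad-symb} via Leibniz/Fa\`a di Bruno and using $m\le j/2$ (so that the phase gradient dominates the symbol gradient), one obtains $|\partial_\zeta^\alpha\beta|\lesssim B\,2^{-m|\alpha|}$ for $|\alpha|\le 9$. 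I then perform iterated integration by parts in $\zeta$ with respect to the phase $\Psi(\zeta)=-w\cdot\zeta+t\tilde\phi(\xi,\zeta)$, whose gradient satisfies $|\nabla_\zeta\Psi|\ge |w|/2$ for $|w|\ge R\gg 2^{-m}$. Each step produces a gain of $(|w|\cdot 2^m)^{-1}\le 2^{-(\ell+20)}$, yielding a pointwise bound $|\widehat\beta(w)|\lesssim B\cdot|\mathrm{supp}\,\beta|\cdot(|w|\cdot 2^m)^{-N}$; squaring and integrating over dyadic annuli $|w|\sim 2^k R$, $k\ge 0$, and summing the resulting geometric series delivers the desired tail estimate.

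The main obstacle is closing the argument with absolute constants using only the nine derivatives of $b$ allowed by \eqref{eqn:l2-bad-symb}. This is handled by the smallness of the per-step IBP gain $2^{-(\ell+20)}$, which is an absolute constant $\le 2^{-20}$ independent of $\ell\ge 0$, combined with careful accounting of the area of $\mathrm{supp}\,b$ in $\eta$ (noting that the angular width is $\min(2^{-m+5},2^{-10})$, so the support area is $\lesssim 2^{2j-\max(m,10)}$). The dyadic shell decomposition and the fact that $R\cdot 2^m=2^{\ell+20}$ together furnish the geometric decay needed to match the target exponent $2^{3m-2\ell}$ in the sharp range $m\le j/2$.
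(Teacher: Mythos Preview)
Your reduction via the change of variables $(y,z)\mapsto(s,w)$ and Plancherel in $s$ is correct, and the claimed factorisation of the kernel is right. The gap is in the final step: the pointwise integration-by-parts bound
\[
|\widehat\beta(w)|\lesssim B\cdot|\mathrm{supp}\,\beta|\cdot(|w|\,2^m)^{-N},\qquad |\mathrm{supp}\,\beta|\sim 2^{2j-m},
\]
is too weak to give the target $\int_{|w|\ge R}|\widehat\beta|^2\lesssim B^2\,2^{3m-2\ell}$ with a \emph{fixed} $N\le 9$. Squaring and integrating over $|w|\ge R$ yields
\[
\int_{|w|\ge R}|\widehat\beta(w)|^2\,dw\ \lesssim\ B^2\,2^{\,4j-4m+2\ell+40-2N(\ell+20)},
\]
and this is $\le B^2\,2^{3m-2\ell}$ only when $4j-7m+4\ell+40\le 2N(\ell+20)$. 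For $\ell=0$ and $m=0$ this reads $4j+40\le 40N$, which forces $N\gtrsim j/10$ and thus cannot be achieved with the nine derivatives supplied by \eqref{eqn:l2-bad-symb}. The same obstruction persists if you integrate by parts with amplitude $b$ and full phase $\Psi=-w\cdot\zeta+t\tilde\phi$ (so that each step gains $2^{-(j-m)}/|w|$ rather than $2^{-m}/|w|$): the pointwise bound still carries the factor $|\mathrm{supp}\,b|\sim 2^{2j-m}$, and one checks that $N\gtrsim m/40$ is then required. In short, the target gain over Plancherel is $2^{-2(j-2m+\ell)}$, whereas each step of your scheme gains only $2^{-(\ell+20)}$; the overshoot $|\mathrm{supp}\,\beta|^{1/2}\sim 2^{\,j-m/2}$ inherent in replacing an $L^2$ norm by a pointwise bound cannot be absorbed with a bounded number of iterations.

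The paper's argument sidesteps this by taking Cauchy--Schwarz in $\eta$ (not $w$) after a \emph{single} integration by parts, and then recognising the remaining $(w,\eta)$-integral as $\|P_{a_{t,\xi}}^*G_\xi\|_{L^2}$ for a pseudodifferential operator whose symbol $a_{t,\xi}(w,\eta)=\mathcal L_{t,\xi,w}[b](\eta)\,\psi(2^{m-\ell}w)$ satisfies $|\partial_w^\beta\partial_\eta^\alpha a_{t,\xi}|\lesssim B\,2^{-(j-2m+\ell)}\,2^{m|\beta|}\,2^{-(j-m)|\alpha|}$. Because $m\le j/2$, this is an $S^0_{1/2,1/2}$ symbol on $\{|\eta|\sim 2^j\}$, and the Calder\'on--Vaillancourt theorem provides the $L^2$ bound $\lesssim B\,2^{-(j-2m+\ell)}$ directly, capturing the cancellation that a pointwise kernel estimate misses. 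If you want to salvage your route, you would need an $L^2$ (rather than $L^\infty$) control of the post-IBP amplitude; this is precisely what the exotic-symbol calculus supplies.
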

The proof of this proposition is contained in \S \ref{sec:offdiag}.
It may be helpful to recognize \eqref{eqn:l2-offdiag} as the special case $N=1$ of the stronger estimate
\[ \| S[F,b] \|_{L^2(\R^2\times\mathcal{E})} \le c_N\, B\, 2^{j-\frac m2} 2^{-(j-2m+\ell)N} (\mathcal{\# E})^{\frac12}\|F\|_{L^2(\R^4)}, \]
which also holds (as long as \eqref{eqn:l2-bad-symb} holds for large enough $|\alpha|$), but will not be needed for our purpose.

Note that \eqref{eqn:l2-bad-symb} features derivatives taken in arbitrary directions, whereas the derivatives in \eqref{eqn:l2-angular-symb} are taken in the radial direction only. The difference between these two estimates reflects the fact that for fixed $\xi$, the $\eta$--support of $b$ is contained in a rectangle of dimensions, say $2^{j+10}\times 2^{j-m+10}$, with its long side aligned radially.

\subsection{Proof of Proposition \ref{prop:l2} given Propositions \ref{prop:l2-orth} and \ref{prop:l2-offdiag}} It suffices to show \eqref{eqn:l2-penult}.
We first show the uniform estimate
\begin{equation}\label{eqn:l2-weighted-unif}
\|S[ F, b_j]\|_{L^2(\R^2\times \mathcal{E})} \le c\,j 2^{j} [\assouadconst{\mathcal{E}}]^{\frac 1{2}} \|F\|_{L^2(w_\gamma)}.
\end{equation}

Observe that since $\mathcal{E}$ is uniformly $2^{-j}$--separated,
\begin{equation}\label{eqn:l2-ejgj}
\#\mathcal{E} \lesssim 2^{j\gamma} \assouadconst{\mathcal{E}}\quad\text{and}\quad \sup_{|I|=2^{-j+2m}} \#(\mathcal{E}\cap I)\lesssim 2^{2m\gamma} \assouadconst{\mathcal{E}}.
\end{equation}
Thus, the almost orthogonality estimate \eqref{eqn:l2-orth} only beats the trivial estimate \eqref{eqn:l2-triv} if $m<j/2$. This motivates the definition of the remainder term
\[ R_j F = S\Big[F, b_j - \sum_{0<m<j/2} b_{j,m} \Big] \]
so that $S[F,b_j] = \sum_{0<m<j/2} S[F,b_{j,m}] + R_j F$.
Observe that the symbol of $R_j$ is $(j,j/2)$--adapted.
Estimate
\begin{multline}\label{eqn:basic-S-dec-simple} \|S[ F, b_j] \|_{L^2(\R^2\times\mathcal{E})} \le \sum_{0<m<j/2} \|S[F,b_{j,m}]\|_{L^2(\R^2\times\mathcal{E})} + \|R_j F\|_{L^2(\R^2\times\mathcal{E})}.
\end{multline}
We estimate both terms separately.
To do this we write $F$ as
\[ F = \sum_{k\in\Z} F_k = \sum_{k\le -m} F_k + \sum_{\ell=1}^\infty F_{-m+\ell}, \]
with each $F_k$ supported on the set
\[ \{(y,z)\in\R^2\times\R^2 \,:\, 2^{k}\le 2^{-20} |y-z|< 2^{k+1} \}. \]
Note that $b_{j,m}$ is strictly $(j,m)$-adapted and satisfies both, \eqref{eqn:l2-angular-symb} and \eqref{eqn:l2-bad-symb}.
Proposition \ref{prop:l2-orth} and \eqref{eqn:l2-ejgj} imply
\[ \| S\big[\textstyle\sum_{\tiny k\le -m} F_k,b_{j,m}\big] \|_{L^2(\R^2\times\mathcal{E}_j)}\lesssim 2^{j} [\assouadconst{\mathcal{E}}]^{\frac12} \|F\|_{L^2(w_\gamma)}, \]
where we have used that $|y-z|^{-(1-2\gamma)/2}\lesssim 2^{-m(\gamma-\frac12)}$ on the support of $\textstyle\sum_{k\le -m} F_k$. On the other hand, for $\ell\ge 1$, Proposition \ref{prop:l2-offdiag} and \eqref{eqn:l2-ejgj} yield
\[ \|S[F_{-m+\ell}, b_{j,m}]\|_{L^2(\R^2\times \mathcal{E})} \lesssim 2^{-\ell+\frac32 m + \frac{\gamma}2 j} [\assouadconst{\mathcal{E}}]^{\frac12} \|F_{-m+\ell}\|_{L^2(\R^4)}. \]
Since $|y-z|^{-(1-2\gamma)/2}\approx 2^{(\ell-m)(\gamma-\frac12)}$ on the support of $F_{-m+\ell}$ the quantity on the right hand side is comparable to
\begin{align*} 2^{-\ell(\frac32-\gamma)+m(2-\gamma)+j \frac{\gamma}2} [\assouadconst{\mathcal{E}}]^{\frac12} \|F_{-m+\ell}\|_{L^2(w_\gamma)} &\\ \lesssim 2^{-\frac{\ell}2+j} [\assouadconst{\mathcal{E}}]^{\frac12} \|F\|_{L^2(w_\gamma)},& \end{align*}
where we used that $m\le j/2$ and $\gamma\le 1$ in the last step.
Together we obtain
\[ \sum_{0<m<j/2} \|S[F,b_{j,m}]\|_{L^2(\R^2\times\mathcal{E})} \lesssim j 2^j [\assouadconst{\mathcal{E}}]^{\frac12} \|F\|_{L^2(w_\gamma)}. \]
The estimate for the second term in \eqref{eqn:basic-S-dec-simple} is similar:
Proposition \ref{prop:l2-triv} implies
\[ \|R_j \big[ \sum_{k\le -m} F_k \big] \|_{L^2(\R^2\times\mathcal{E})} \lesssim 2^j [\assouadconst{\mathcal{E}}]^{\frac12} \|F\|_{L^2(w_\gamma)}. \]
On the other hand, applying Proposition \ref{prop:l2-offdiag} with $m=j/2$ gives
\[ \| R_j [F_{\lceil-j/2\rceil+\ell} ] \|_{L^2(\R^2\times\mathcal{E})} \lesssim 2^{-\frac\ell{2} + j} [\assouadconst{\mathcal{E}}]^{\frac12} \|F\|_{L^2(w_\gamma)} \]
for all $\ell \ge 1$. The previous two displays combined give
\begin{equation}\label{eqn:Rjest} \|R_j F \|_{L^2(\R^2\times\mathcal{E})} \lesssim 2^j [\assouadconst{\mathcal{E}}]^{\frac12} \|F\|_{L^2(w_\gamma)}, \end{equation}
as required.

In order to finish the proof we need to establish
an improvement over \eqref{eqn:l2-weighted-unif} in the range $j>(2\gamma-1)^{-2}$, namely
\begin{equation}\label{eqn:l2-weighted-impr}
\|S[ F,b_j]\|_{L^2(\R^2\times \mathcal{E})} \le c\,(2\gamma-1)^{-1}j^{1/2} 2^{j} [\assouadconst{\mathcal{E}}]^{\frac 1{2}} \|F\|_{L^2(w_\gamma)}.
\end{equation}
Estimate
\begin{multline}\label{eqn:basic-S-dec} \|S[ F, b_j] \|_{L^2(\R^2\times\mathcal{E})} \\ \le j^{\frac12} \Big( \sum_{0<m<j/2} \|S[F,b_{j,m}]\|_{L^2(\R^2\times\mathcal{E})}^2 \Big)^{1/2} + \|R_j F\|_{L^2(\R^2\times\mathcal{E})}.
\end{multline}
The second term has already been estimated in \eqref{eqn:Rjest}.
To treat the first term we first observe that
by Proposition \ref{prop:l2-offdiag} and \eqref{eqn:l2-ejgj}
\[\|S[F_{-m+\ell}, b_{j,m}]\|_{L^2(\R^2\times \mathcal{E})} \lesssim 2^{-\frac{\ell}2+j} [\assouadconst{\mathcal{E}}]^{\frac12} \|F_{-m+\ell}\|_{L^2(w_\gamma)},\quad \ell\ge 1.\]
On the other hand, by Proposition \ref{prop:l2-orth} and \eqref{eqn:l2-ejgj} we have
\[\| S[F_{-m+\ell},b_{j,m}] \|_{L^2(\R^2\times\mathcal{E})}\lesssim 2^{-(\gamma-\frac12)(-\ell)+j} [\assouadconst{\mathcal{E}}]^{\frac12} \|F_{-m+\ell}\|_{L^2(w_\gamma)},\ell\le 0. \]
From these estimates we get
\begin{align*}
&\Big(\sum_{0<m<j/2} \|S[F,b_{j,m}]\|_{L^2(\R^2\times\mathcal{E})}^2\Big)^{1/2}
\\&\le \sum_{\ell=-\infty}^\infty\Big(\sum_{0<m<j/2}
\|S[F_{-m+\ell},b_{j,m}]\|_{L^2(\R^2\times\mathcal{E})}^2\Big)^{1/2}
\\ &\lesssim 2^j [\assouadconst{\mathcal{E}}]^{\frac12}
\sum_{\ell=-\infty}^\infty
\min (2^{-\ell/2}, 2^{\ell(\gamma-\frac 12)} )
\Big( \sum_{0<m<j/2} \|F_{-m+\ell}\|_{L^2(w_\gamma)}^2 \Big)^{1/2}.
\end{align*}
By the disjointness of supports of the $F_k$, we have for each $\ell\in \mathbb Z$
\[\Big( \sum_{0<m<j/2} \|F_{-m+\ell}\|_{L^2(w_\gamma)}^2 \Big)^{1/2}\le \|F\|_{L^2(w_\gamma)}
\]
and since $\sum_{\ell=-\infty}^\infty
\min (2^{-\ell/2}, 2^{\ell(\gamma-\frac 12)})\lesssim (2\gamma-1)^{-1}$ for $1/2<\gamma\le 1$ we obtain \eqref{eqn:l2-weighted-impr}.
\qed

\subsection{Proof of Proposition \ref{prop:l2-orth}}
\label{sec:mainorth}
We begin by observing that we may assume without loss of generality that $\mathcal{E}$ is uniformly $2^{-j+2m}$--separated (in the sense of \eqref{eqn:unifsep}). This is because every uniformly $2^{-j}$--separated set $\mathcal{E}\subset (0,\infty)$ can be written as a disjoint union of at most
\[ 2 \sup_{|I|=2^{-j+2m}} \# (\mathcal{E}\cap I) \]
sets each of which is uniformly $2^{-j+2m}$--separated. \c

By duality, $\| S[F,b]\|_{L^2(\R^2\times\mathcal{E})}$ is equal to the supremum over all $G$ with $\|G\|_{L^2(\R^2\times\mathcal{E})}=1$ of
\[ \Big|\int_{\R^4} \widehat{F}(\xi-\eta,\eta) \Big[ \sum_{t\in \mathcal{E}} G(\xi,t) e^{\pm i t(|\xi-\eta|+|\eta|)} b(t,\xi,\eta) \Big]
\,d(\xi,\eta) \Big|. \]
By the Cauchy-Schwarz inequality applied in $(\xi,\eta)$ we estimate the previous by
\begin{equation}\label{eqn:orthpf-1}
\|F\|_{L^2(\R^4)} \Big( \int_{\R^4} \Big| \sum_{t\in \mathcal{E}} G(\xi,t) e^{\pm i t(|\xi-\eta|+|\eta|)} b(t,\xi,\eta) \Big|^2 d(\eta,\xi) \Big)^{1/2}
\end{equation}
For each fixed $\xi$ consider
\[
\int_{\R^2} \Big|\sum_{t\in \mathcal{E}} G(\xi,t) e^{i t(|\xi-\eta|+|\eta|)} b(t,\xi,\eta)\Big|^2 d\eta. \]
Passing to polar coordinates $\eta=\rho\theta$ with $\rho\ge 0$ and $\theta\in S^1\subset \R^2$ and fixing the angular variable $\theta$ we are left with the one-dimensional integral
\[
\int_0^\infty \Big|\sum_{t\in \mathcal{E}} G(\xi,t) e^{\pm i t(|\xi-\rho\theta|+\rho)} b(t,\xi,\rho\theta)\Big|^2 \rho d\rho. \]
Expanding the square we rewrite this integral as
\begin{equation}\label{eqn:orthpf-2}
\sum_{t,t'\in \mathcal{E}} G(\xi,t) \overline{G(\xi,t')} \Big[\int_0^\infty e^{\pm i (t-t')\tau(\rho)} \nu_{\xi,\theta}(t,t',\rho) d\rho \Big],
\end{equation}
where we have set
\[ \tau(\rho)=\tau_{\xi,\theta} (\rho) = |\xi-\rho\theta| + \rho, \]
\[ \nu_{\xi,\theta}(t,t',\rho) = b(t,\xi,\rho\theta)\overline{b(t',\xi,\rho\theta)} \rho. \]
Keep in mind that by the assumptions on $b$, on the support of $\nu_{\xi,\theta}$ we have $\rho\approx 2^j, |\xi-\rho\theta|\approx 2^j$ and the angle of $\theta$ with $\xi-\rho\theta$ is $\approx 2^{-m}$.
Observe that $\tau$ is strictly monotone increasing with
\begin{equation}\label{eqn:tauderiv1} \tau'(\rho) = 1 - \big\langle \theta, \frac{\xi-\rho\theta}{|\xi-\rho\theta|} \big\rangle= 1-\cos(\measuredangle(\theta, \xi-\rho\theta) ) \approx 2^{-2m}.
\end{equation}
Similarly, we will show that
\begin{equation}\label{eqn:tauderiv}
|\tau^{(N)}(\rho)| \lesssim_N 2^{-2m-(N-1)j}
\end{equation}
for every integer $N\ge 1$. In order to establish \eqref{eqn:tauderiv} we verify that
for each $N\ge 1$ there are coefficients $(a_{k,N})_{k=0,\dots,N}$ with $\sum_{k=0}^N a_{k,N}=0$ such that
\begin{equation}\label{eqn:induction-statement} \tau^{(N)}(\rho) = |v|^{-(N-1)} \sum_{k=0}^N a_{k,N} w^k,
\end{equation}
where $v=\xi-\rho\theta$ and $w=\tfrac{\langle v, \theta\rangle}{|v|}=\cos(\measuredangle(\theta,\xi-\rho\theta))$.
This claim implies \eqref{eqn:tauderiv} (note that the polynomial $\sum_{k=0}^N a_{k,N} w^k$ is divisible by $1-w$).
To prove the claim we use induction on $N$, with $a_{0,1}=1$, $a_{1,1}=-1$ by \eqref{eqn:tauderiv1}.

Calculate $\tfrac{d}{d\rho}|v|^{1-N}=(N-1)w|v|^{-N}$ and
$\tfrac{d}{d\rho} w^k=|v|^{-1}kw^{k-1}(w^2-1)$. Hence
\begin{multline*}|v|^N\frac{d}{d\rho} \Big(|v|^{1-N}\sum_{k=0}^Na_{k,N}w^k\Big)=
\sum_{k=0}^N(N-1+k) a_{k,N}w^{k+1}-\sum_{k=1}^N ka_{k,N}w^{k-1}\end{multline*}
which is
written as $\sum_{k=0}^{N+1} a_{k,N+1}w^k$, which is a polynomial of degree $N+1$, and one checks using the induction hypothesis that the sum of its coefficients are zero.
Hence \eqref{eqn:tauderiv} is verified.

As a consequence of \eqref{eqn:tauderiv},
\[ |\partial_\rho^N [ \tfrac1{\tau'} ] (\rho) | \lesssim_N 2^{2m-Nj}. \]
Moreover, since $b$ is strictly $(j,m)$-adapted and satisfies \eqref{eqn:l2-angular-symb},
\[ |\partial_\rho^N \nu_{\xi,\theta}(t,t',\rho)|\lesssim_N B^2\, 2^{j-Nj}\quad\text{for}\;N\le 2. \]
Integrating by parts twice then shows
\[ \Big| \int_0^\infty e^{\pm i (t-t')\tau(\rho)} \nu_{\xi,\theta}(t,t',\rho) d\rho \Big| \lesssim B^2\, 2^{2j} (1+2^{-2m+j} |t-t'|)^{-2}. \]
From this we may estimate \eqref{eqn:orthpf-2} by
\[ B^2\, 2^{2j} \sum_{t,t'\in\mathcal{E}} (1+2^{-2m+j} |t-t'|)^{-2} |G(\xi,t)G(\xi,t')|. \]
An application of the Cauchy--Schwarz inequality shows that the previous is dominated by
\[ B^2 2^{2j} \Big(\sum_{s\in \mathcal{E}-\mathcal{E}} (1+2^{-2m+j} |s|)^{-2}\Big) \Big(\sum_{t\in\mathcal{E}} |G(\xi,t)|^2\Big). \]
Since $\mathcal{E}$ is uniformly $2^{-j+2m}$--separated (see \eqref{eqn:unifsep}), the previous display is
\[ \lesssim B^2\, 2^{2j} \sum_{t\in \mathcal{E}} |G(\xi,t)|^2. \]
Hence we have proved that
\[ \Big( \int_{\R^4} \Big| \sum_{t\in \mathcal{E}} G(\xi,t) e^{\pm i t(|\xi-\eta|+|\eta|)} b(t,\xi,\eta) \Big|^2 d(\eta, \xi) \Big)^{1/2} \lesssim B\, 2^{j-\frac{m}2}, \]
recalling that $\|G\|_{L^2(\R^2\times\mathcal{E})}=1$. Note that the factor $2^{-\frac{m}2}$ stems from integration over the angular variable $\theta$.
In view of \eqref{eqn:orthpf-1} this concludes the proof of Proposition \ref{prop:l2-orth}. \qed

\subsection{Interlude: Exotic symbols}\label{sec:exotic} In the proof of Proposition
\ref{prop:l2-offdiag} we use estimates for oscillatory integrals which are equivalent to the $L^2$ results for pseudo-differential operators of symbol class $S_{\rho,\rho}^0$ considered by Calder\'on and Vaillancourt \cite{cv}; here we take $\rho=1/2$.
Consider symbols $a:\R^d\times \R^d\to \C$ such that there exists a constant $A>0$ with
\[ |\partial_x^\beta \partial_\xi^\alpha a(x,\xi)| \le A (1+|\xi|)^{-( |\alpha|-|\beta|)/2} \]
for multiindices $\alpha,\beta$ with, say, $|\alpha|\le 4d+1, |\beta|\le 4d+1$. We define the associated operator
\begin{equation}\label{eqn:pdodef}
P_a f(x) = \int_{\R^d} e^{i\langle x,\xi\rangle} a(x,\xi) f(\xi) d\xi,
\end{equation}
which is {\em a priori} defined at least on integrable functions.
Then one has the estimate
\[ \|P_a f\|_{L^2(\R^d)} \le c A \|f\|_{L^2(\R^d)}, \]
where $c$ is a constant only depending on the dimension $d$.
The proof (an application of the
Cotlar--Stein almost orthogonality lemma) is due to \cite{cv}, for an exposition see also
Stein \cite[Ch. VII, \S 2.5]{Steinbook3}.

\subsection{Proof of Proposition \ref{prop:l2-offdiag}}
\label{sec:offdiag}
Fix $t\in \mathcal{E}$. From the definition \eqref{eqn:def-S},
\begin{equation}\label{eqn:l2-offdiagpf-0}
S[F,b](\xi,t) = \int_{\R^4} e^{-i\langle \xi,y\rangle} F(y,z) \Big[\int_{\R^2} e^{i\Phi_{t,\xi,y-z}(\eta)} b(t,\xi,\eta)(\eta) d\eta \Big] \,d(y,z),
\end{equation}
where we have set
\[ \Phi_{t,\xi,w}(\eta) = \langle \eta,w\rangle \pm t(|\xi-\eta|+|\eta|). \]
Changing variables $z\mapsto y-w$ in \eqref{eqn:l2-offdiagpf-0} shows that $\|S[F,b](\cdot,t)\|_{L^2(\R^2)}^2$ equals
\begin{equation}\label{eqn:offdiagpf-1}
\int_{\R^2} \Big| \int_{\R^2}\int_{\R^2} e^{-i\langle \xi,y\rangle} F(y,y-w) \Big[ \int_{\R^2} e^{i\Phi_{t,\xi,w}(\eta)} b(t,\xi,\eta) d\eta\Big] dy \,dw\Big|^2 d\xi.
\end{equation}
Computing the gradient of the phase function with respect to $\eta$,
\begin{equation}\label{eqn:cv-ineq} \nabla \Phi_{t,\xi,w}(\eta) = w \pm t \Big( \frac{\eta}{|\eta|} - \frac{\xi-\eta}{|\xi-\eta|} \Big).
\end{equation}
The key observation is now that if $\xi$ and $\eta$ satisfy \eqref{eqn:l2-defadapted}, then
\begin{equation}\label{eqn:l2-offdiag-key}
t \Big| \frac{\eta}{|\eta|} - \frac{\xi-\eta}{|\xi-\eta|} \Big| \le 2\, \measuredangle(\eta,\xi-\eta)\le 2^{-m+6}.
\end{equation}
Therefore, if $|w|\ge 2^{-m+\ell+20}$ and $\ell\ge0$, then
\[ |\nabla \Phi_{t,\xi,w}(\eta)| \ge 2^{-m+\ell+19}. \]
This tells us that we should integrate by parts in the $\eta$--integral.
Define a first order differential operator acting on functions $a:\R^2\to\C$ by
\[ \mathcal{L}_{t,\xi,w} [a] = i\,\mathrm{div}\Big(a \frac{\nabla \Phi_{t,\xi,w}}{|\nabla \Phi_{t,\xi,w}|^2}\Big). \]
Integrating by parts we obtain
\[ \int_{\R^2} e^{i\Phi_{t,\xi,w}(\eta)} b(t,\xi,\eta) d\eta = \int_{\R^2} e^{i\Phi_{t,\xi,w}(\eta)} \mathcal{L}_{t,\xi,w}[ b(t,\xi,\cdot)](\eta) d\eta. \]
Plugging this back into \eqref{eqn:offdiagpf-1}, changing the order of integration and applying the Cauchy--Schwarz inequality in $\eta$ shows that \eqref{eqn:offdiagpf-1} is
\[\lesssim 2^{2j-m} \int_{\R^4} \Big| \int_{\R^4} e^{-i\langle \xi,y\rangle + i \langle \eta, w\rangle} F(y,y-w) \mathcal{L}_{t,\xi,w}[ b(t,\xi,\cdot)](\eta)\,d(y,w) \Big|^2 d(\eta,\xi).\]
Setting $G_\xi(w) = \int_{\R^2} e^{-i\langle \xi,y\rangle} F(y,y-w) dy$ we consider
\begin{equation}\label{eqn:offdiagpf-2}
\int_{\R^2} \Big| \int_{\R^2} e^{i \langle \eta, w\rangle} G_\xi(w) \mathcal{L}_{t,\xi,w}[b(t,\xi,\cdot)](\eta) dw \Big|^2 d\eta
\end{equation}
for each fixed $\xi$.
Recalling the support assumption on $F$, let $\psi$ be a smooth function on $\R^2$ that is equal to $1$ on $\{ |w|\ge 2^{20} \}$ and supported in $\{ |w|\ge 2^{19} \}$. Setting \[ a_{t,\xi}(w,\eta)=\mathcal{L}_{t,\xi,w}[b(t,\xi,\cdot)](\eta)\psi(2^{m-\ell} w), \]
we recognize \eqref{eqn:offdiagpf-2} as equal to $\|P_{a_{t,\xi}}^* G_\xi\|_{L^2(\R^2)}^2$ (see \eqref{eqn:pdodef}).
The product and chain rules show that $\mathcal{L}_{t,\xi,w}[a]$ equals \cjr\cas
\[ \frac{i\langle\nabla a, \nabla\Phi_{t,\xi,w}\rangle}{ |\nabla \Phi_{t,\xi,w}|^{2}} - \frac{2i a \langle D^2\Phi_{t,\xi,w} \nabla\Phi_{t,\xi,w}, \nabla\Phi_{t,\xi,w}\rangle}{|\nabla \Phi_{t,\xi,w}|^{4}}, \]
where $D^2\Phi_{t,\xi,w}$ denotes the Hessian matrix of $\Phi_{t,\xi,w}$. From this one can deduce the symbol estimate
\begin{equation}\label{eqn:offdiagpf-symb}
\big|\partial_{w}^\beta \partial_\eta^\alpha a_{t,\xi}(w,\eta) | \lesssim B\,2^{-(j-2m+\ell)} 2^{m|\beta|} 2^{-(j-m) |\alpha|}.
\end{equation}
Since $0\le m\le j/2$ and $a_{t,\xi}$ is supported in $\{(w,\eta):|\eta|\approx 2^j\}$, an application of the Calder\'on--Vaillancourt result in \S\ref{sec:exotic}
and $L^2$ duality yield that
\eqref{eqn:offdiagpf-2} is
\[ \|P_{a_{t,\xi}}^* G_\xi\|_{L^2(\R^2)}^2 \lesssim B\,2^{-2(j-2m+\ell)} \|G_\xi\|^2_{L^2(\R^2)}, \]
uniformly in $\xi$. Integrating over $\xi$ and making use of Plancherel's theorem we obtain the bound
\[ \|S[F,b](\cdot,t)\|_{L^2(\R^2)} \lesssim B\, 2^{j-\frac{m}2} 2^{-(j-2m+\ell)} \|F\|_{L^2(\R^2)}, \]
valid for each fixed $t$. This implies \eqref{eqn:l2-offdiag}. \qed

\section{A necessary condition}\label{sec:finiteunions}

We shall need a strengthening of a lower bound from \cite{ahrs} which had been stated there for Assouad regular sets.

\begin{lem} \label{lem:neclemm}
Let $E\subset [1,2]$ and $M_E:L^p\to L^q$. Then
\begin{enumerate}
\item[(i)] for all $\delta>0$, for all intervals $I$ with $\delta\le |I|\le 1$,
\begin{equation}\label{eqn:Q4lower}
N(E\cap I,\delta)^{1/q} \lc \|M_E\|_{L^p\to L^q}
\delta^{\frac 1p-\frac dq} \big (\delta/|I|\big)^{\frac{d-1}{2} (\frac 1p+\frac 1q-1)}.
\end{equation}
\item[(ii)] Let $Q_1$, $Q_{4,\gamma}$ be as in \eqref{eqn:Qdef}.
The point $(\tfrac1p,\tfrac1{pd})$ belongs to $\overline{\mathcal T_E}$ if and only if it belongs to the line segment $\overline{Q_1Q_{4,\gamma}}$ with $\gamma=\dim_\qA\!E$.\c
\end{enumerate}
\end{lem}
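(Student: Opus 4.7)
My plan is to derive (i) from a Knapp-type construction and then to deduce (ii) from (i) combined with the definition of $\dim_{\qA}$ and Theorem~\ref{thm:upperbounds}.

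\emph{For (i).} Given $I,\delta$ with $\delta\le|I|\le 1$, I would set $r=\sqrt{\delta/|I|}$, choose a maximal $\delta$-separated subset $\{t_1,\ldots,t_N\}\subset E\cap I$ (so $N\gtrsim N(E\cap I,\delta)$), and take $f=\bbone_P$, where $P$ is the radial $\delta$-neighborhood of a spherical cap on $S^{d-1}$ of lateral diameter $\sim r$ near (say) $e_d$; then $\|f\|_p\approx(r^{d-1}\delta)^{1/p}$. A direct geometric computation tracing the intersection $S(x,t_j)\cap P$ shows that for each $j$ there is a plate $R_j\subset\R^d$ of lateral extent $\sim\delta/r$ and radial thickness $\sim\delta$, centered on the cap's axis at height $\approx 1-t_j$, on which $\A_{t_j}f\gtrsim r^{d-1}$. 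Crucially, the lateral extent of $R_j$ is $\delta/r$ rather than the naively expected $r$: the constraint $|x-t_jy|\in[1-\tfrac{\delta}{2},1+\tfrac{\delta}{2}]$ imposed by $x-t_jy\in P$ forces a quadratic relation between tangential and normal coordinates of $y$, so only the smaller subset of the cap contributes. Since the $t_j$ are $\delta$-separated along the axis, the plates $R_j$ are pairwise disjoint, so
\[
\|M_E f\|_q^q \gtrsim N\,(\delta/r)^{d-1}\,\delta\,r^{(d-1)q}.
\]
Dividing by $\|f\|_p^q$, rearranging, and substituting $r=\sqrt{\delta/|I|}$ then yields (i).

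\emph{For (ii).} The direction ($\Leftarrow$) is immediate: since $Q_1$ and $Q_{4,\gamma}$ are vertices of $\mathcal{Q}(\beta,\gamma)$ (for $\beta=\dim_{\mathrm{M}}\!E$), the segment $\overline{Q_1Q_{4,\gamma}}\subset \mathcal Q(\beta,\gamma)=\overline{\mathcal R(\beta,\gamma)}\subset\overline{\mathcal T_E}$ by Theorem~\ref{thm:upperbounds}.

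For ($\Rightarrow$), assume $(1/p,1/(pd))\in\overline{\mathcal T_E}$. Choose an approximating sequence $(1/p_n,1/q_n)\in\mathcal T_E$ converging to $(1/p,1/(pd))$ with $q_n\ge p_n d$ (achievable since $\mathcal T_E$ is convex and contains the trivial point $(0,0)$, so it extends toward the origin along the slope-$1/d$ ray). Apply (i) to $(1/p_n,1/q_n)$ and raise to the $q_n$-th power; since $q_n/p_n-d\ge 0$, the factor $\delta^{q_n(1/p_n-d/q_n)}$ is bounded by $1$, so
\[
N(E\cap I,\delta)\lesssim_n (|I|/\delta)^{\alpha_n},\qquad \alpha_n=\tfrac{(d-1)q_n(1-1/p_n-1/q_n)}{2},
\]
with $\alpha_n\to(d-1)(pd-d-1)/2$ as $n\to\infty$. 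Hence $\dim_{\mathrm{A}}\!E\le \alpha_n$ for each $n$, so $\dim_{\qA}\!E\le\dim_{\mathrm{A}}\!E\le (d-1)(pd-d-1)/2$. Writing $\gamma=\dim_{\qA}\!E$ and solving yields $p\ge (d^2+2\gamma-1)/(d(d-1))$, equivalently $1/p\le d(d-1)/(d^2+2\gamma-1)$, which places $(1/p,1/(pd))$ on $\overline{Q_1Q_{4,\gamma}}$.

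The main obstacle is the Knapp construction of (i): in particular, the correct identification that the lateral extent of $R_j$ is $\delta/r$ rather than $r$ is what produces the sharp exponent $(d-1)(1/p+1/q-1)/2$ on $(\delta/|I|)$ in (i). A secondary technical matter in ($\Rightarrow$) is arranging the approximating sequence to satisfy $q_n\ge p_nd$ so that the $\delta$-exponent after raising (i) to the $q_n$-th power behaves harmlessly as $\delta\to 0$.
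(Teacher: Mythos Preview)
Your approach to (i) is the paper's Knapp construction, but one detail is wrong and it matters: you place the cap $P$ on $S^{d-1}$ (radius~$1$), whereas the paper centers it on the sphere of radius $a$, an endpoint of $I$. With your choice, for $x_d\approx 1-t_j$ and $\omega$ near $-e_d$ one finds
\[
|x-t_j\omega|^2-1 \;=\; 2s + t_j(t_j-1)\,|\omega'|^2 + x_1^{\,2}-2t_j x_1\omega_1' +O(\cdots),
\]
so the term $t_j(t_j-1)|\omega'|^2$ has size $|t_j-1|\cdot r^2=|t_j-1|\,\delta/|I|$, which is $\gg\delta$ unless $|t_j-1|\lesssim|I|$. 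For a generic interval $I\subset[1,2]$ this forces $|\omega'|\lesssim\sqrt{\delta}$, giving only $\A_{t_j}f\lesssim\delta^{(d-1)/2}$ rather than the claimed $r^{d-1}$, and the resulting inequality is strictly weaker than \eqref{eqn:Q4lower}. The fix is exactly what the paper does: center $P$ at radius $a$ so that the offending coefficient becomes $|t_j-a|\le|I|$; then $|I|\cdot r^2=\delta$ and your plate of lateral extent $\delta/r$ with $\A_{t_j}f\gtrsim r^{d-1}$ is correct.

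Your argument for $(\Rightarrow)$ in (ii) has a genuine gap. You propose an approximating sequence $(1/p_n,1/q_n)\in\mathcal T_E$ with $q_n\ge p_nd$, but this is impossible: since $\mathcal T_E\subset\mathcal Q(\beta,\beta)$, every $(1/p,1/q)\in\mathcal T_E$ satisfies $1/q\ge 1/(pd)$, i.e.\ $q\le pd$; and the equality case $q_n=p_nd$ gives points only known to lie in $\overline{\mathcal T_E}$, not $\mathcal T_E$. Worse, the intermediate conclusion $\dim_{\mathrm A}E\le\alpha_n$ can be outright false: Lemma~\ref{lem:minkzero} produces sets with $\dim_{\qA}E=0$ and $\dim_{\mathrm A}E=1$, for which $Q_{4,0}\in\overline{\mathcal T_E}$ yet certainly $\dim_{\mathrm A}E\not\le 0$. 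The repair is to approximate from the \emph{interior} of $\mathcal T_E$ (possible since $\mathcal T_E\supset\mathcal R(\beta,\gamma)$ has nonempty interior), so $\epsilon_n:=d-q_n/p_n>0$, and then feed the $\delta^{-\epsilon_n}$ loss into the Assouad spectrum: for $|I|\ge\delta^\theta$ one has $\delta^{-\epsilon_n}\le(|I|/\delta)^{\epsilon_n/(1-\theta)}$, whence $\overline{\dim}_{\mathrm A,\theta}E\le\alpha_n+\epsilon_n/(1-\theta)$ for all $n,\theta$. Sending $n\to\infty$ at fixed $\theta$ gives $\overline{\dim}_{\mathrm A,\theta}E\le\gamma_0$, and then $\theta\to1$ yields $\dim_{\qA}E\le\gamma_0$.
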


\begin{proof}
Fix an interval $I=[a,b]$ with $\delta<b-a <1$ and let $\rho=\delta/ |I|$.

Let $f_{\delta,\rho}$ be the characteristic function of a $\delta$-neighborhood of the spherical cap of diameter $\sqrt\rho$, specifically
\[ \{ (y',y_d)\,:\, ||y|-a|\le \delta,\,|y'|\le \sqrt\rho,\,y_d>0 \}. \]
Then
\[ \|f_{\delta,I}\|_p \approx (\delta \rho^{\frac{d-1}{2}})^{1/p} . \]
Choose a covering of $E\cap I$ by a collection $\mathcal{J}_I$ of pairwise disjoint half open intervals of length $\delta$ intersecting $E\cap I$.
Then $\# \mathcal{J_I}\ge N(E\cap I,\delta)$.

We now argue as in \cite{ahrs} and let $c\in (0,1)$ be a small constant, say $c<10^{-2}$. We shall verify that for all $t\in \cup_{J\in \mathcal J_I}J $ and $x=(x',x_d)$ such that $|x'|\le c \delta\rho^{-1/2}$ and
$|x_d+t-a|\le c \delta$,
\begin{equation}\label{eqn:counterex_lowerbd}
M_E f_{\delta,\rho}(x) \ge A_t f_{\delta,\rho}(x',x_d) \gtrsim \rho^{\frac{d-1}2}.
\end{equation}
Fix $y=(y',y_d)\in S^{d-1}$ with $|y'|\le c \sqrt\rho $. Then
\begin{align*}
|x+ty|^2 &= |x'|^2 + x_d^2 + 2t \inn{x'}{y'} + 2tx_d y_d + t^2\\
& = |x'|^2 + (x_d+t)^2 + 2tx_d (\sqrt{1-|y'|^2}-1) + 2t \inn{x'}{y'}.
\end{align*}
Since $|x_d + t - a|\le c \delta$ and $|x'|^2 \le c^2 \delta^2\rho^{-1}\le c^2 \delta$, we get
\[ ||x'|^2 + (x_d+t)^2 - a^2| \le 6 c \delta, \]
\[ |2t\inn{x'}{y'}| \le 4 |x'| |y'| \le 4 c^2 \delta, \]
and
\[ |2t x_d (\sqrt{1-|y'|^2}-1)| \le 2 (|t-a|+c\delta) |y'|^2 \le 2c (|I|+c\delta) \rho \le 4 c \delta.\]
Here we used that $|I|=\delta \rho^{-1}$. This implies
\[ ||x+ty|^2 - a^2| \le 14 c \delta, \]
and therefore $||x+ty|-a|\le \delta$ if $c$ is chosen small enough ($c=10^{-2}$ works).
Also, $|x'+ty'| \le |x'| + 2 |y'| \le \sqrt\rho$ so that $f_{\delta,\rho}(x+ty) = 1$. This proves \eqref{eqn:counterex_lowerbd}.

Since the intervals $J\in\mathcal{J}_I$ are disjoint, the corresponding regions of $x$ where \eqref{eqn:counterex_lowerbd} holds can be chosen disjoint, and therefore
\[ \|M_E f_{\delta,\rho} \|_q \gtrsim \rho^{\frac{d-1}2} \big [ \delta N(E\cap I,\delta) (\delta\rho^{-1/2})^{d-1}\big ]^{1/q}. \]
Thus we must have
\[\rho^{\frac{d-1}2} \big (N(E\cap I,\delta) \delta \cdot (\delta\rho^{-1/2})^{d-1}\big )^{1/q}
\lc \|M_E\|_{L^p\to L^q} (\delta\rho^{\frac{d-1}{2}})^{1/p}.\]
which yields \eqref{eqn:Q4lower}.

Regarding part (ii), by Theorem \ref{thm:upperbounds} the points $(\tfrac1p, \tfrac1{pd})$ belong to $\overline{\mathcal {T}_E}$ if they are on the line segment $\overline{Q_1Q_{4,\gamma}}$ with $\gamma=\dim_\qA\!E$. It follows from part (i) that this condition is also necessary.
\end{proof}

\begin{proof}[Proof of Theorem \ref{thm:finiteunions}]
Observe that \[\sup_{j=1,\dots,m} \|M_{E_j}\|_{p\to q}\le \|M_E\|_{p\to q} \le \sum_{j=1}^m \|M_{E_j}\|_{p\to q}. \]
By Theorem \ref{thm:upperbounds} we have
$\cap_{j=1}^m \mathcal{Q}(\beta_j,\gamma_j)\subset \overline{\mathcal{T}_E}.$
Also, since $E_j$ is $(\beta_j,\gamma_j)$-regular, the known necessary conditions ({\it cf.} \eqref{eqn:typeset-ar} and Lemma \ref{lem:neclemm} above) yield \[\mathcal{Q}(\beta_j, \gamma_j)^\complement \subset
\overline{\mathcal{T}_{E_j}}^\complement\subset\overline{\mathcal{T}_E}^\complement\]
for each $j=1,\dots,m$. Hence, $\overline{\mathcal{T}_E}\subset\cap_{j=1}^m \mathcal{Q}(\beta_j, \gamma_j)$.
\end{proof}

\section{Constructions of Assouad regular sets}\label{sec:assouadcon}
In the proof of Theorem \ref{thm:type} we rely on
a family of (quasi-)Assouad regular sets that is {\it uniform} in the following sense.

\begin{lem}\label{lem:uniform}
There exist sets $\{E_{\beta,\gamma}\,:\,0<\beta<\gamma\le 1\}$ such that each $E_{\beta,\gamma}$ is $(\beta,\gamma)$-Assouad regular and there exists $c\ge 1$ such that for all $0<\beta<\gamma\le 1$, $\delta\in (0,1)$ and intervals $I\subset [1,2]$ with $|I|>\delta$,
\begin{equation}\label{eqn:uniformestimate}
N(E_{\beta,\gamma}, \delta)\le c\,\delta^{-\beta},\quad N(E_{\beta,\gamma}\cap I,\delta)\le c\,\big(\tfrac{\delta}{|I|})^{-\gamma}.
\end{equation}
\end{lem}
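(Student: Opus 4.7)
The plan is to construct $E_{\beta,\gamma}$ as a disjoint union of finite ``packets'' $P_k \subset [1,2]$, $k \ge 1$, each of which is a scaled and truncated self-similar Cantor set of Hausdorff dimension $\gamma$. To achieve uniformity in $(\beta,\gamma)$, I would fix, for each $\gamma \in (0,1]$, a model Cantor set $K_\gamma \subset [0,1]$ realized as the attractor of an IFS with exactly two similarities of ratio $2^{-1/\gamma}$. This fixed choice of IFS gives $K_\gamma$ uniform covering estimates $N(K_\gamma,\rho) \approx \rho^{-\gamma}$ with absolute constants, and allows its finite truncations $K_\gamma^{(n)}$ (the $2^n$ left endpoints of the depth-$n$ cylinders) to serve as the building blocks for our packets.

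Given $0<\beta<\gamma\le 1$, I would choose packet diameters $d_k = 2^{-k-10}$ (a fixed geometric sequence, \emph{independent} of $\beta,\gamma$, arranged so that $\sum_k d_k < 1$ to ensure disjoint embedding in $[1,2]$), and choose a truncation depth $n_k$ so that the truncated, scaled packet $P_k = p_k + d_k \cdot K_\gamma^{(n_k)}$ has $N_k := 2^{n_k} \approx d_k^{-\beta}$; equivalently, $n_k \approx k\beta$. With this choice, $P_k$ consists of $\approx d_k^{-\beta}$ points concentrated in an interval of length $d_k$, and its internal minimal scale is $s_k = d_k \cdot 2^{-n_k/\gamma} \approx d_k^{1-\beta/\gamma}$, which makes $N_k \approx (d_k/s_k)^\gamma$ — the defining relation for local dimension $\gamma$. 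The translations $p_k$ are chosen greedily so that the packets lie pairwise disjoint in $[1,2]$.

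To verify the two bounds in \eqref{eqn:uniformestimate}, I would first decompose, for each scale $\delta$, the packets into three regimes: \emph{coarse} packets with $s_k \ge \delta$ (each contributes its total count $N_k$), \emph{critical} packets with $s_k < \delta \le d_k$ (each contributes $\lesssim (d_k/\delta)^\gamma$ by the uniform Cantor bound on $K_\gamma^{(n_k)}$), and \emph{fine} packets with $d_k < \delta$ (each contributes $\lesssim 1$). Using the calibration above and summing the geometric series in $k$ in each regime yields $N(E_{\beta,\gamma},\delta) \lesssim \delta^{-\beta}$. For the Assouad estimate, given $I$ with $|I| > \delta$, I would identify the essentially unique index $k^*$ with $d_{k^*} \approx |I|$ and observe that only packets with $d_k \lesssim |I|$ can lie inside $I$; the contribution of $P_{k^*}$ itself is $\lesssim (|I|/\delta)^\gamma$ by the Cantor structure, and the contribution of finer packets sums geometrically to the same bound, while coarser packets can intersect $I$ in at most a segment of length $|I|$, giving again $\lesssim (|I|/\delta)^\gamma$ by an analogous decomposition into the three regimes above.

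The main obstacle I anticipate is ensuring that all implied constants are genuinely absolute as $\beta/\gamma$ approaches either $0$ or $1$. In both extremes, naive geometric sums in $k$ have ratios that tend to $1$, which would produce constants blowing up like $(1-\beta/\gamma)^{-1}$ or $(\beta/\gamma)^{-1}$. The remedy is twofold: (i) fix the sequence $d_k = 2^{-k-10}$ so the \emph{geometric} spacing of diameters is insensitive to $(\beta,\gamma)$, and (ii) exploit the fact that for each scale $\delta$, only $O(1)$ indices $k$ satisfy $d_k \in [\delta, 2\delta]$, so the ``critical'' regime always contains a uniformly bounded number of packets and the sums in the coarse/fine regimes are bounded by the critical term times a geometric series with ratio $2^{-\gamma}$ or $2^{-\beta}$ (depending on the regime), whose $k$-sum is bounded by $(1-2^{-\min(\beta,\gamma)})^{-1}$; this last quantity is still unbounded as $\beta\to 0$, so one must further restrict the summation to $k \ge k_0(\beta,\gamma)$ (chosen so that the smallest packet contributing at scale $\delta$ is the critical one) and absorb the rest into the Cantor-set estimate using the uniform bounds on $K_\gamma^{(n_k)}$.
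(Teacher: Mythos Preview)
Your overall strategy---build $E_{\beta,\gamma}$ as a union of truncated Cantor packets of local dimension $\gamma$---is the same as the paper's, but there are two genuine gaps.

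First, the calibration $N_k \approx d_k^{-\beta}$ gives the wrong Minkowski dimension. With $d_k = 2^{-k-10}$ and $2^{n_k} \approx d_k^{-\beta}$, the finest scale in $P_k$ is $s_k = d_k \cdot 2^{-n_k/\gamma} \approx d_k^{1+\beta/\gamma}$ (not $d_k^{1-\beta/\gamma}$ as you write). At scale $\delta$ the dominant contribution to $N(E_{\beta,\gamma},\delta)$ comes from the packet with $s_k \approx \delta$, whose cardinality is $N_k \approx d_k^{-\beta} \approx \delta^{-\beta\gamma/(\beta+\gamma)}$; the coarse and critical regimes sum to the same order. Hence your set has $\dim_{\mathrm M}\!E_{\beta,\gamma} = \beta\gamma/(\beta+\gamma) < \beta$. (Concretely: for $\beta = \tfrac12$, $\gamma = 1$, your packet $P_k$ has $\approx 2^{k/2}$ evenly spaced points in an interval of length $2^{-k}$, and one computes $N(E,2^{-\ell}) \approx 2^{\ell/3}$.) The paper instead makes the packet diameters depend on $\beta$, taking $|J_k| \approx 2^{-k/\beta}$ and choosing the truncation depth so that the finest scale $\sigma_k$ satisfies $\sigma_k^{1-\beta/\gamma} \approx |J_k|$; then $N_k = (|J_k|/\sigma_k)^\gamma = \sigma_k^{-\beta}$, which is exactly the correct count at scale $\delta = \sigma_k$.

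Second, your proposed remedy for the blow-up of constants as $\beta \to 0$ is not an actual argument. The paper handles uniformity by a different and clean device: the Assouad-type bound $N(F\cap I,\delta) \le 48\,(\delta/|I|)^{-\gamma}$ is obtained with an \emph{absolute} constant (possible because the right-hand side depends only on the dilation-invariant ratio $\delta/|I|$), while the Minkowski bound $N(F,\delta) \le c_F(\beta,\gamma)\,\delta^{-\beta}$ is permitted to carry a constant that blows up in $(\beta,\gamma)$. One then rescales, setting $E_{\beta,\gamma} = 1 + c_F^{-1/\beta} F$, so that $N(E_{\beta,\gamma},\delta) = N(F, c_F^{1/\beta}\delta) \le \delta^{-\beta}$ for small $\delta$; the Assouad bound survives unchanged because $\delta/|I|$ is preserved under dilation. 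This rescaling trick is the missing idea in your uniformity argument.
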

To prove the lemma we shall modify a construction in \cite[\S 5]{ahrs}. Additional care is needed because of our requirement of uniformity. In what follows we fix $0<\beta<\gamma\le 1$.

\newcommand{\cantor}{\mathfrak{C}}
\subsection{Preliminary: Cantor set construction}
Let $J$ be a compact interval. For $0<\mu\le 1/2$ and an integer $m\ge 0$ we let $\cantor_{\mu,m}(J)$ denote the union of the $m$th generation
$\{ {I_{m,\nu}}\,:\,\nu=1,\dots,
2^m\}$ of intervals of length $\mu^m |J|$ that arise from starting with $J$ and repeatedly removing the open middle piece of length $(1-2\mu)|J|$. Then the set $\cantor_\mu(J)=\cap_{m\ge 0} \cantor_{\mu,m}(J)$ has Hausdorff, Minkowski and Assouad dimensions equal to $\gamma=-\log(2)/\log(\mu)$. Note $\cantor_{1/2,m}(J)=J$ for all $m\ge 0$.
For every $\delta\in (0,|J|)$ it can be seen that\c
\begin{equation*}
\tfrac12 |J|^\gamma \delta^{-\gamma}\le N(\cantor_\mu(J), \delta) \le 2 |J|^\gamma \delta^{-\gamma}
\end{equation*}
Similarly, for intervals $I\subset J$ and $\delta\in (0, |I|)$ one can verify\c
\begin{equation*}
\tfrac14 |I|^\gamma \delta^{-\gamma}\le N(\cantor_\mu(J)\cap I, \delta)\le 4 |I|^\gamma \delta^{-\gamma}.
\end{equation*}
However, to construct the Assouad regular sets we will not use the Cantor sets $\cantor_\mu(J)$ directly. Instead we will work with
\[ \mathfrak{C}^{\mathrm{mid}}_{\mu,m}(J) = \{ \text{midpoint of}\,I_{m,\nu}\,:\,\nu=1,\dots,2^m\}. \]
Write $\delta_m=\mu^m |J|$. Observe that for every $m\ge 0$, $\delta\in (0,|J|)$,
\begin{equation}\label{eqn:midmink}
N(\mathfrak{C}^{\mathrm{mid}}_{\mu,m}(J), \delta) \le \min(\delta_m^{-\gamma}, 2\delta^{-\gamma}) |J|^{\gamma}.
\end{equation}
If $\delta<\delta_m$ this holds with equality since $\mathfrak{C}^{\mathrm{mid}}_{\mu,m}(J)$ consists of $2^m=\delta_m^{-\gamma} |J|^\gamma$ points that are pairwise separated by $\ge \delta_m$. If $\delta\ge \delta_m$ then there exists $j\le m$ such that $\delta\in [\mu^j |J|, \mu^{j-1}|J|)$, so
\[ N(\mathfrak{C}^{\mathrm{mid}}_{\mu,m}(J), \delta) \le 2^{j} \le 2\delta^{-\gamma} |J|^\gamma\]
and \eqref{eqn:midmink} is proved.

We claim that \eqref{eqn:midmink} implies
\begin{equation}\label{eqn:midassou}
N(\mathfrak{C}^{\mathrm{mid}}_{\mu,m}(J)\cap I, \delta) \le \,
\begin{cases} 8\delta^{-\gamma} |I|^\gamma &\text{ if }\delta_m \le \delta \le |I|, \\
4\delta_m^{-\gamma} |I|^{\gamma} &\text{ if }\delta \le \delta_m \le |I|,\\
1 &\text{ if }|I|<\delta_m, \end{cases}
\end{equation}
i.e. $N(\mathfrak{C}^{\mathrm{mid}}_{\mu,m}(J)\cap I, \delta)\le \max(1, \min(8\delta^{-\gamma}, 4\delta_m^{-\gamma}) |I|^\gamma)$, valid for all $m\ge 0$, open subintervals $I\subset J$ with $|I|>\delta$.
To see this, first note that the inequality holds if $|I|<\delta_m$ (as the points in
$\mathfrak{C}^{\mathrm{mid}}_{\mu,m}(J)$ have mutual distance $\ge \delta_m$).
Let $|I|\ge \delta_m$. Let $\ell$ be the integer so that $|I|\in [\mu^\ell |J|, \mu^{\ell-1} |J|)$. Then $\ell\le m$.
Define
\[ \mathcal{V} = \{ \nu=1,\dots,2^{\ell-1}\,:\,I_{\ell-1,\nu}\cap I\not=\emptyset \}. \]
Observe that $\# \mathcal{V}\le 2$ (if $\mu<1/3$, then even $\#\mathcal{V}\le 1$). Next,
\[ \mathfrak{C}^{\mathrm{mid}}_{\mu,m}(J)\cap I \subset \bigcup_{\nu\in\mathcal{V}}
\mathfrak{C}^{\mathrm{mid}}_{\mu,m}(J)\cap I_{\ell-1,\nu} = \bigcup_{\nu\in\mathcal{V}} \mathfrak{C}^{\mathrm{mid}}_{m-\ell+1}(I_{\ell-1,\nu}). \]
Using \eqref{eqn:midmink}, $\mu^{m-\ell+1} |I_{\ell-1,\nu}| = \mu^m |J| = \delta_m$ and $|I_{\ell-1,\nu}|^\gamma\le 2 |I|^\gamma$,
\[ N(\mathfrak{C}^{\mathrm{mid}}_{m-\ell+1}(I_{\ell-1,\nu}), \delta) \le \min( 2\delta_m^{-\gamma}, 4\delta^{-\gamma} ) |I|^\gamma. \c\]
Since $\#\mathcal{V}\le 2$, this implies the claim.

\subsection{Assouad regular sets}\label{sec:assouadconsub}
Let $\lambda = 2^{-1/\beta}$ and $\mu=2^{-1/\gamma}$,
so that $\lambda<\mu\le 1/2$. Define
\[ J_k=[1+ \lambda^{k+1}, 1+\lambda^k],\quad \theta = 1 - \beta/\gamma,\quad m(k) = \lceil \tfrac{k}{\theta} \rceil. \]
We then set
$$F= \bigcup_{k=1}^\infty F_k, \text{ where } F_k=\cantor^{\mathrm{mid}}_{\mu,m(k)}(J_k).$$
The length of each the constituent intervals $I_{m(k),\nu}$, $\nu=1,\dots,2^{m(k)}$ of $\cantor_{m(k)}^\mu(J_k)$ is
\[ \sigma_k = |J_k| \mu^{m(k)}. \]
Since $\lambda<1/2$, $2^{-k/\beta-1}\le |J_k|=2^{-k/\beta}(1-\lambda) \le 2^{-k/\beta}$\c.
The choice of $m(k)$ is made so that $\sigma_k^\theta \approx 2^{-k/\beta}\approx |J_k|$. More precisely, \c
\begin{equation}\label{Jklen}
(2^{-1}\mu)^\theta 2^{-k/\beta} \le \sigma_k^\theta \le 2^{-k/\beta}.
\end{equation}

For open intervals $I\subset [1,2]$, $\delta\in (0,1)$, $|I|>\delta$ we claim that
\begin{equation}
\label{eqn:64assupper} N(F\cap I, \delta)\le 48\, (\delta/|I|)^{-\gamma}.
\end{equation}
This estimate
immediately yields $\dim_{\mathrm{A}}\!F\le \gamma$.

To prove \eqref{eqn:64assupper} first take $I\subset J_k$ open with $|I|>\delta$. Then by \eqref{eqn:midassou},
\begin{equation}\label{64:NEkI}
N(F_k\cap I,\delta)\le \max(1, 8 \min( \sigma_k^{-\gamma} , \delta^{-\gamma} ) |I|^{\gamma}).
\end{equation}
Then, for an arbitrary open interval $I\subset [1,2]$ and $\delta<|I|$,
\begin{align*}&N(F\cap I,\delta) \le \sum_{k\ge 0} N(F_k\cap (J_k\cap I),\delta) \\
& \le \sum_{\substack{k: |J_k|\ge |I|,\\ J_k\cap I\not=\emptyset}} N(F_k\cap (J_k\cap I),\delta) +
\sum_{k: |J_k|\le |I|} N(F_k\cap J_k,\delta). \end{align*}
By \eqref{64:NEkI} this is bounded by
\[\label{64:inbetween} 8 \sum_{\substack{k: |J_k|\ge |I|,\\ J_k\cap I\not=\emptyset}} \delta^{-\gamma} |I|^\gamma + 8 \sum_{k: |J_k|\le |I|} \delta^{-\gamma} |J_k|^\gamma \le 48\,\delta^{-\gamma} |I|^{\gamma} \]
which finishes the proof of \eqref{eqn:64assupper}.
Next we turn to $\overline{\dim}_{\mathrm{A},\theta}F$. We have \cjr
\begin{equation}\label{eqn:assouregpf1}
N(F\cap J_k,\sigma_k) = N(F_k, \sigma_k) = 2^{m(k)} = \mu^{-\gamma m(k)}= \sigma_k^{-\gamma} |J_k|^\gamma,
\end{equation}
which implies $\overline{\dim}_{\mathrm{A},\theta}\!F\ge \gamma$, because $|J_k|\approx \sigma_k^\theta$. Since $\overline{\dim}_{\mathrm{A},\theta}\!F\le \dim_{\mathrm{A}}\!F$ this proves
\[ \dim_{\mathrm{A}}\!F=\overline{\dim}_{\mathrm{A},\theta}\!F=\gamma.\]

Regarding $\dim_{\mathrm{M}}F$ we see that because of
$\sigma_k^{-\gamma} |J_k|^\gamma \approx \sigma_k^{-\beta}$ one gets $\dim_{\mathrm{M}}\!F\ge \beta$.
Finally, for every $\delta\in (0,1)$,
\begin{equation}\label{64:mink}
N(F, \delta) \le c_{F} \delta^{-\beta},
\end{equation}
where the constant $c_{F}\ge 1$ depends only on $\beta,\gamma$ (and may blow up as $\beta,\gamma$ tend to $0$). This follows since
\[ N(F,\delta) \le N(\cup_{k:\delta\ge |J_k|} F_k, \delta) + \sum_{k:\sigma_k<\delta<|J_k|} N(F_k,\delta) + \sum_{k:\sigma_k\ge \delta} N(F_k,\delta), \]
which by choice of $m(k)$ and \eqref{64:NEkI} is
\[ \lesssim_{\beta,\gamma} 1+\sum_{k\,:\,\sigma_k<\delta<\sigma_k^{\theta}} \delta^{-\gamma} \sigma_k^{\gamma-\beta} + \sum_{k\ge 0: \sigma_k\ge \delta} \sigma_k^{-\gamma} \sigma_k^{\gamma-\beta} \lesssim_{\beta,\gamma} \delta^{-\beta}. \]
To resolve the blowup in \eqref{64:mink} we use an affine transformation. Define
\[ E_{\beta,\gamma} = 1 + c_{F}^{-1/\beta} F\subset [1,2].\]
Then by \eqref{64:mink}, $N(E_{\beta,\gamma}, \delta)=N(F, c_F^{1/\beta} \delta) \le c_F (c_F^{1/\beta} \delta)^{-\beta} =\delta^{-\beta}$ for all $\delta<c_F^{-1/\beta}$ and $N(E_{\beta,\gamma}, \delta)\le 1\le \delta^{-\beta}$ for all $\delta\in [c_F^{-1/\beta}, 1)$. Similarly, by \eqref{eqn:64assupper}, $N(E_{\beta,\gamma}\cap I,\delta)\le c\, (\delta/|I|)^{-\gamma}$ for all $\delta\in (0,1)$ and intervals $I\subset [1,2]$ with $|I|>\delta$. This concludes the proof of Lemma \ref{lem:uniform}. \qed

\subsection{Zero Minkowski and positive Assouad dimension}\label{sec:minkzero}
We consider $E\subset [1,2]$ with $\dim_{\mathrm{M}}\!E=0$.
Note that $\overline{\dim}_{\mathrm{A},\theta}\!E=0$ for all $\theta<1$ (\cite[Cor.\,3.3]{fraser-yu1}),
hence also $\dim_{\qA}\!E=0$. However (as already remarked in \cite{fhhty}) the Assouad dimension can be any given $\gamma\in [0,1]$. We prove this for the sake of completeness.

\begin{lem}\label{lem:minkzero}
For every $\alpha\in [0,1]$ there exists a set $G_{\alpha}\subset [1,2]$ such that $\dim_{\mathrm{M}}\!G_{\alpha}=0$ and $\dim_{\mathrm{A}}\!G_{\a}=\a$.
\end{lem}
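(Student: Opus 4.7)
\textbf{Proof plan for Lemma \ref{lem:minkzero}.}
The case $\alpha=0$ is handled trivially by taking $G_0=\{1\}$. For $\alpha\in(0,1]$ the plan is to attach very small $\alpha$-Cantor clusters to a geometric sequence accumulating at a single point. Fix $\mu=2^{-1/\alpha}\in(0,1/2]$, let $c_k=1+2^{-k}$, and choose interval lengths $\tau_k=2^{-k!}$ so that $\tau_k\ll|c_k-c_{k+1}|=2^{-k-1}$ for $k\ge 3$. Then the intervals $J_k=[c_k,c_k+\tau_k]$, $k\ge 3$, are pairwise disjoint and contained in $[1,2]$, and inside each I place the midpoint-Cantor cluster $F_k=\mathfrak{C}^{\mathrm{mid}}_{\mu,k}(J_k)$, consisting of $2^k$ points separated by $\sigma_k=\tau_k\mu^k$. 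I set $G_\alpha=\{1\}\cup\bigcup_{k\ge 3}F_k$.

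The lower bound $\dim_{\mathrm A}G_\alpha\ge\alpha$ is immediate from $N(G_\alpha\cap J_k,\sigma_k)\ge 2^k=(|J_k|/\sigma_k)^\alpha$. For the matching upper bound I would fix $I\subset[1,2]$ and $\delta\in(0,|I|)$ and consider the set of indices $k$ with $J_k\cap I\ne\emptyset$. If this set is a singleton $\{k_0\}$, the desired bound $N(G_\alpha\cap I,\delta)\le C(\delta/|I|)^{-\alpha}$ reduces to \eqref{eqn:midassou}. Otherwise, letting $k_0$ be the smallest such index, a short geometric computation gives $|I|\gtrsim 2^{-k_0}\gg\tau_{k_0}$. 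I plan to use the pointwise bound $N(F_k,\delta)\le C(\tau_k/\delta)^\alpha$ (valid for $\delta\le\tau_k$ by \eqref{eqn:midassou}) together with the super-geometric decay of $\tau_k^\alpha$ to obtain
\begin{equation*}
\sum_{k\ge k_0,\,\tau_k\ge\delta}N(F_k,\delta)\lesssim\delta^{-\alpha}\tau_{k_0}^\alpha\lesssim(|I|/\delta)^\alpha.
\end{equation*}
Clusters with $\tau_k<\delta$ lie each in a single $\delta$-ball centered at $c_k$, and an elementary count shows that $\{c_k:k>K(\delta)\}\cup\{1\}$ can be covered by $O(\log_2(|I|/\delta))\le C_\alpha(|I|/\delta)^\alpha$ distinct $\delta$-balls, completing the Assouad upper bound.

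The Minkowski estimate $\dim_{\mathrm M}G_\alpha=0$ amounts to showing $N(G_\alpha,\delta)\le C_\epsilon\delta^{-\epsilon}$ for every $\epsilon>0$. With $K=K(\delta)=\max\{k:\tau_k\ge\delta\}$, the contributions from $k\le K$ are bounded by the cruder estimate $N(F_k,\delta)\le 2^k$ giving a total of at most $2^{K+1}$, while the contributions from $k>K$ are bounded by the number of $\delta$-separated $c_k$'s, which is $O(\log_2(1/\delta))$. The defining inequality $K!\le\log_2(1/\delta)$ forces $K=O(\log\log(1/\delta))$ by Stirling, so $2^{K+1}=O(\log(1/\delta))$, sub-polynomial in $1/\delta$.

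The main technical obstacle is the multi-cluster bookkeeping for the Assouad upper bound: one must leverage the super-factorial decay $\tau_k=2^{-k!}$, much faster than the geometric decay of the spacing $|c_k-c_{k+1}|=2^{-k-1}$, to guarantee that the largest cluster inside any given $I$ both dominates the cluster sum and has diameter negligible compared with $|I|$, so that the resulting bound matches $(\delta/|I|)^{-\alpha}$ uniformly in the position of $I$. Once this is in place, Minkowski dimension zero follows from the same decomposition but with the sharper point-count $N(F_k,\delta)\le 2^k$.
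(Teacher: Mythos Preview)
Your plan is correct and the details you sketch fill in without difficulty, but the construction is genuinely different from the paper's. The paper places the Cantor clusters $F_n=\mathfrak C^{\mathrm{mid}}_{\mu,n}(J_n)$ in intervals $J_n=[1+2^{-2^n-1},1+2^{-2^n}]$, so both the length and the position of $J_n$ decay doubly exponentially and are comparable to one another; the Assouad upper bound then comes directly from the geometric-type sum $\sum_{n\ge n_0}|J_n|^\alpha\lesssim |J_{n_\circ}|^\alpha$, with a cutoff $n_0$ depending on $\alpha$, and the case $\alpha=1$ is handled separately by the sequence $\{1+2^{-\sqrt\ell}\}$. Your construction instead decouples the two scales: the cluster centers $c_k=1+2^{-k}$ decay only geometrically while the diameters $\tau_k=2^{-k!}$ decay super-factorially. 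This forces you to add one ingredient the paper does not need, namely that the bare set of centers $\{c_k\}$ (essentially a convex sequence, hence of Assouad dimension $1$) nevertheless has covering count only $O(\log(|I|/\delta))\le C_\alpha(|I|/\delta)^\alpha$; once that is observed, your cluster sum $\sum_k\tau_k^\alpha$ is dominated by its first term and the Assouad bound follows. In return your construction works uniformly for all $\alpha\in(0,1]$ without the separate $\alpha=1$ case and without the cutoff $n_0$. Both arguments yield $N(G_\alpha,\delta)=O(\log(1/\delta))$, hence Minkowski dimension zero.
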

\begin{proof}

If $\a=0$ let $G_0=\{\text{point}\}$. For $\a=1$ let $G_1= \{1+2^{-\sqrt{\ell}}: \ell\ge 1\}$.

It remains to consider the case $0<\a<1$. Let
\[ J_n=[1+2^{-2^n-1}, 1+2^{-2^n}],\quad F_n=\mathfrak C^{\text{mid}}_{\mu,n} (J_n), \quad \mu=2^{1/\a}.\]
Next, let $n_0=n_0(\a)$ be the smallest non-negative integer so that $\a 2^{n_0} \ge 1$ and define
\[ G_\a = \bigcup_{n\ge n_0} F_n \subset [1,2]. \]

We first consider the Minkowski dimension. Pick $\delta\in (0,1)$ and observe that
$\cup_{n: 2^{-2^n}\le \delta}F_n$
is covered by $[0,\delta]$ and that the cardinality of
$\cup_{n: 2^{-2^n}>\delta}F_n$ is bounded by $\sum_{n:2^{-2^n}> \delta} 2^n\le 2 \log_2(\delta^{-1})$. Hence $\dim_{\mathrm{M}} G_\a=0$.

It remains to show that $\dim_{\mathrm{A}} G_\a=\a$.
Each of the intervals that make up $\cantor_{\mu,n}(J_n)$ has length
\[ \sigma_n = \mu^n |J_n| = \mu^n 2^{-2^n-1}. \]
Note by definition that
\[N(F_n\cap J_n, \sigma_n )= N(F_n,\sigma_n)= 2^n = \sigma_n^{-\a} |J_n|^\a.\]
Thus we get $\dim_{\mathrm{A}} G_\a\ge \a$. It remains to show $\dim_{\mathrm{A}} G_\a\le \a$. Here we proceed as in \S \ref{sec:assouadconsub}. If $I\subset J_n$ with $|I|>\delta$ then by \eqref{eqn:midassou},
\[ N(F_n \cap I, \delta) \le 8 \delta^{-\a}|I|^\a. \]
This implies for an arbitrary open interval $I\subset [1,2]$ with $|I|>\delta$ that
\[ N(G_\a\cap I, \delta) \le 8 \sum_{\substack{n:|J_n|\ge |I|\\J_n\cap I\not=\emptyset}} \delta^{-\a} |I|^\a + 8 \delta^{-\a} \sum_{\substack{n\ge n_0:|J_n|\le |I|}} |J_n|^\a \le 48\, \delta^{-\a} |I|^\a. \]
This is because the first sum in this display has at most two terms while
\begin{align*} \sum_{\substack{n\ge n_0:\\|J_n|\le |I|}} |J_n|^\a &\le \sum_{\substack{n\ge n_0: \\2^{-2^n-1}\le |I|} }(2^{-2^n - 1})^\a \le 2^{-\a} \sum_{n\ge 0: 2^{-2^{n}}\le (2|I|)^{2^{-n_0}}} 2^{-2^n} \\
& \le 2^{-\a} \sum_{\substack{n\ge 0:\\ 2^{-n} \le (2 |I|)^{2^{-n_0}}}} 2^{-n} \le 2^{-\a+1} (2 |I|)^{2^{-n_0}} \le 4 |I|^{\a}. \qedhere\end{align*}
\end{proof}

\section{Convex sets occurring as closures of type sets}\label{sec:type}
In this section we prove Theorem \ref{thm:type}.
Let $\W\subset [0,1]^2$ be a closed convex set and let $0\le \beta\le\gamma\le 1$ be such that \begin{equation}\label{eqn:WTEincl}
\mathcal{Q}(\beta,\gamma)\subset \W\subset \mathcal{Q}(\beta, \beta)
\end{equation}
and suppose that $\gamma$ is minimal with this property. If $\overline{\mathcal T_E}=\W$ for some $E\subset [1,2]$ then we have $\beta=\dim_{\mathrm M}\!E$, moreover it follows from part (ii) of Lemma \ref{lem:neclemm} that $\gamma=\dim_\qA\!E$.
In what follows it thus suffices to prove the existence of $E$ satisfying
$\overline{\mathcal T_E}=\mathcal{W}.$

If $\beta=\gamma$, then we may take $E$ to be a self similar Cantor set of Minkowski dimension $\beta$. If $\beta=0$, then $\W=\mathcal{Q}(0,0)$, so the single average example $E=\{\mathrm{point}\}$ works. It remains to consider the case $0<\beta<\gamma\le 1$.

We may also assume that
$\mathcal{Q}(\beta,\gamma)\subsetneq \W\subsetneq \mathcal{Q}(\beta,\beta)$ (if $\W=\mathcal{Q}(\beta,\gamma)$ then we choose any $(\beta,\gamma)$-regular set for $E$, such as the one from Lemma \ref{lem:uniform}, and if $\W=\mathcal{Q}(\beta,\beta)$ we again choose a Cantor set). \cmt{That is maybe not necessary}

Let $\mathfrak{L}$ denote the set of lines that pass through at least one point in $\partial \W$ but are disjoint from the interior of $\W$. Each line $\ell\in\mathfrak{L}$ divides the plane into two half-spaces. We denote by $\mathfrak{H}(\ell)$ the closed half-space that contains $\W$. Then
\begin{equation}\label{eqn:halfspaceintersec}
\W = \bigcap_{\ell\in\mathfrak{L}} \mathfrak{H}(\ell).
\end{equation}
There exists a countable subset $\mathfrak{L}'\subset \mathfrak{L}$ such that $\W=\cap_{\ell\in\mathfrak{L}'} \mathfrak{H}(\ell)$. We further select a subset $\mathfrak{L}^\flat\subset \mathfrak{L}'$ consisting only of those lines that do not contain any of the edges of $\mathcal{Q}(\beta,\gamma)$.
$\mathfrak{L}^\flat$ must be non-empty because $\W\supsetneq \mathcal{Q}(\beta,\gamma)$.

Since $\mathfrak{L}^\flat$ is countable we may write $\mathfrak{L}^\flat = \{\ell_1,\ell_2,\dots\}$.
The line $\ell_n$ intersects the line segment connecting $Q_{3,\beta}$ and $Q_{3,0}$ in a point $Q_{3,\beta_n}$ for some $\beta_n\in [0,\beta]$. The line $\ell_n$ also intersects the line segment connecting $Q_{4,\gamma}$ and $Q_{4,\beta}$ in a point $Q_{4,\gamma_n}$ for some $\gamma_n\in [\beta, \gamma]$. This is illustrated in Figure \ref{fig:tangents}.

\begin{figure}[ht]
\begin{tikzpicture}[scale=100]
\def\d{4}
\def\b{.3}
\def\g{1}

\def\clipStyle{solid}
\def\clipLineOpacity{1}
\def\clipFillOpacity{0}

\begin{scope}[yscale=1]
\definecoords

\def\ptsize{.015pt}

\def\clipLx{ \Qfourx{\g} - .005 }
\def\clipLy{ \Qfoury{\g} - .006 }
\def\clipHx{ \Qthreex{0} + .01 }
\def\clipHy{ \Qthreey{\b} + .005 }
\def\clipCx{ (\clipLx+\clipHx)/2 }
\def\clipCy{ (\clipLy+\clipHy)/2 }
\def\cliprad{ .2 }

\def\theGamma{\gamma}

\coordinate (clipLL) at ({\clipLx}, {\clipLy});
\coordinate (clipHH) at ({\clipHx}, {\clipHy});
\draw [\clipStyle, opacity=\clipLineOpacity] (clipLL) rectangle (clipHH);
\fill [opacity=\clipFillOpacity] (clipLL) rectangle (clipHH);
\clip (clipLL) rectangle (clipHH);

\drawauxlines{Q4b}{Q2}

\def\QbgDrawCritSeg{1}
\def\QbgCritSegStyle{solid}
\def\QbgCritSegOpacity{.2}
\def\AFillColor{black}
\def\QbbFillOpacity{0}
\def\QbgFillOpacity{.1}
\def\AFillOpacity{.1}

\drawQbg
\drawQbb
\lblQbb
\drawA
\lblQthreezero

\def\tangentOpacity{.8}
\newcommand{\drawATangent}[3]{
\path (Q4g) .. controls (C1) and (C2) .. (Q3) node [sloped,pos=#1,minimum width=10cm](tang) {};
\coordinate (tmpA) at (intersection of tang.west--tang.east and Q4g--Q30);
\coordinate (tmpB) at (intersection of tang.west--tang.east and Q3--Q30);
\draw [opacity=\tangentOpacity] (tmpA) -- (tmpB);
\fill (tmpA) node [below right] {#2} circle [radius=\ptsize];
\fill (tmpB) node [right] {#3} circle [radius=\ptsize];
}

\def\ptsize{.01pt}
\foreach \x [evaluate=\x as \p using \x*0.1] in {2,...,9} {
\ifnum\x=5
\drawATangent{\p}{$Q_{4,\gamma_n}$}{$Q_{3,\beta_n}$}
\else
\drawATangent{\p}{}{}
\fi
}

\end{scope}
\end{tikzpicture}
\caption{$d=4$, $\beta=0.3$, $\gamma=1$}\label{fig:tangents}
\end{figure}
Then from \eqref{eqn:halfspaceintersec},
\begin{equation}\label{eqn:Qintersec}
\W = \bigcap_{n\ge 1} \mathcal{Q}(\beta_n, \gamma_n).
\end{equation}
Let $E_{\beta,\gamma}\subset [1,2]$ be as in Lemma \ref{lem:uniform}.
Let $L:\mathbb{N}\to\mathbb{N}$
be strictly increasing (we may just choose $L(n)=n$ in this proof but we will need to
make a different choice to prove Remark \ref{rem:assouad}).
Define
\begin{equation} \label{eqn:En-constr}
\begin{aligned}E_n = 1 + 2^{-L(n)-1} &E_{\beta_n,\gamma_n} \subset [1+2^{-L(n)-1},
1+2^{-L(n)}],\quad
\\& E = \bigcup_{n=1}^\infty E_n\subset [1,2].
\end{aligned}\end{equation}
It remains to show that $\overline{\mathcal{T}_E} = \W$. First we have $\|M_E\|_{p\to q} \ge \|M_{E_n}\|_{p\to q}$ for every $n\ge 1$. Since $E_n$ is $(\beta_n,\gamma_n)$-regular,
we have $\overline{\mathcal{T}_E} \subset \overline{\mathcal {T}_{E_n}}=\mathcal{Q}(\beta_n,\gamma_n)$, {\it cf}. \eqref {eqn:typeset-ar}. Hence $\overline{\mathcal{T}_E}\subset \W$ by \eqref{eqn:Qintersec}.

To prove $\W\subset \overline{\mathcal{T}_E}$ let us take a point $(p_*^{-1},q_*^{-1})$ in the interior of $\W$. It suffices to show that $M_E$ is bounded $L^{p_*}\to L^{q_*}$.
By the construction of $E_n$ and \eqref{eqn:uniformestimate},
\begin{equation}\label{eqn:uniformestimate2}
N(E_n, \delta) \le c \delta^{-\beta_n},\quad N(E_n\cap I,\delta)\le c \big(\tfrac{\delta}{|I|}\big)^{-\gamma_n}
\end{equation}
for every $n\ge 1, \delta\in (0,2^{-n})$ and every interval $I\subset [1,2]$ with $|I|>\delta$.
Here it is important that $c$ does not depend on $n$.

Crucially, for each $j$ all $E_n$ with $n>j$ are contained
in $[1, 1+2^{-L(j)}]$ and thus
in $[1, 1+2^{-j}]$. Therefore
we can estimate \c
\begin{equation}\label{eqn:5_basic}
\big\|\sup_{t\in E} |\A^j_t f| \big\|_{q_*} \le \sum_{n\le j} \big\|\sup_{t\in E_n} |\A^j_t f|\big\|_{q_*} +
\big\|\sup_{1\le t\le 1+2^{-j} } |\A_t^j f|\big\|_{q_*}.
\end{equation}
The second term on the right hand side is dominated by
\[\|\A_1 f\|_{q_*} +\int_0^{2^{-j} }\big\|\tfrac{d}{dt} \A_t^j f\big \|_{q_*}dt\lesssim 2^{-a j}\|f\|_{p_*}
\]for some $a>0$ because $(p_*^{-1}, q_*^{-1})$ is in the interior of $\mathcal{Q}(0,0)\supset \W$.

It remains to estimate the first term in \eqref{eqn:5_basic}. Since $(p_*^{-1}, q_*^{-1})$ is in the interior of $\W$, it is away from the boundary of each $\mathcal{Q}(\beta_n,\gamma_n)$ by a positive distance independent of $n$.
By Corollary \ref{cor:interior} we now obtain $\varepsilon =\varepsilon(p_*,q_*) >0$ not depending on $n$ such that \cas
\begin{equation}\label{eqn:5_decay}
\big\|\sup_{t\in E_n} |\A^j_t f|\big \|_{q_*} \lesssim_{d,p_*,q_*} 2^{-j \varepsilon(p_*,q_*)} \|f\|_{p_*}
\end{equation}
for all $j\ge 0$ and $n\ge 1$. To see that the implicit constant does not depend on $n$ one uses \eqref{eqn:Ajmain} and that \eqref{eqn:uniformestimate2} implies \cas
\[ [\chi^{E_n}_{\mathrm{A},\gamma_n}(2^{-j})]^{b_1} [\chi_{\mathrm{M},\beta_n}^{E_n}(2^{-j})]^{b_2} \le c \]
with $c$ depending only on the dimension $d$.
\cmt{\jr{changed}}
Hence, the first term on the right hand side of \eqref{eqn:5_basic} is $\lesssim j 2^{-j\varepsilon} \|f\|_{p_*}$. This concludes the proof that $(p_*^{-1}, q_*^{-1})\in \mathcal{T}_E$.

\subsection*{Proof of Remark \ref{rem:assouad}}
Without loss of generality let $\gamma_*=\gamma$.

We need to make a judicious choice of the sequence $L(n)$ to achieve
$\dim_{\mathrm{A}}\!E=\gamma$.
Define $L(n)$ iteratively such that for
$n=2,3,\dots$
\begin{equation}\label{eqn:Ln}L(n)\ge \max
\{L(k)+\gamma_n^{-1}(n-k): \,\, 1\le k\le n-1\}.\end{equation}
We then claim that the construction actually yields
\begin{equation}\label{eqn:assouadabove}
\dim_{\mathrm{A}}\!E \le \gamma.
\end{equation} and since $\gamma=\dim_{\qA}\!E\le \dim_{\mathrm A}\!E$ we actually get equality in \eqref{eqn:assouadabove}.

We now show \eqref{eqn:assouadabove}.
Let $J_n=[1+2^{-L(n)-1}, 1+2^{-L(n)}]$. Let $\delta<|I|\le 1$ and observe that
\[
\sum_{\substack{n:|J_n|\ge |I|,\\ J_n\cap I\not=\emptyset}} N(E_{n}\cap I,\delta) \le c \sum_{\substack{n:|J_n|\ge |I|,\\ J_n\cap I\not=\emptyset}} (\delta/|I|)^{-\gamma_{n}} \le 2 c (\delta/|I|)^{-\gamma}
\]
using that $\gamma_n\le \gamma$ and there are at most two $n$ such that $J_n$ intersects $I$ and $|J_n|\ge |I|$.
Next
let $n_\circ$ be the smallest $n$ for which $|J_n|<|I|$. By \eqref{eqn:Ln},
$L(n)\gamma_n-L(n_\circ)\gamma_{n}\ge n-n_\circ$. We obtain \c
\begin{align*}
&\sum_{n: |J_n|< |I|} N(E_n\cap (J_n\cap I),\delta)
\le c \sum_{n:|J_n|<|I|} (\delta/|J_n|)^{-\gamma_n}
\\&\quad
= c \sum_{n:|J_n|<|I|} (\delta/|I|)^{-\gamma_n} (|I|/|J_n|)^{-\gamma_n}\le c(\delta/|I|)^{-\gamma} \sum_{n\ge n_\circ} (|I|/|J_n|)^{-\gamma_n},
\end{align*}
and
\begin{align*}&\sum_{n\ge n_\circ} (|I|/|J_n|)^{-\gamma_n} = \sum_{n\ge n_\circ}2^{-(L(n)+1)\gamma_n} |I|^{-\gamma_n}
\\&\le
\sum_{n\ge n_\circ}2^{-L(n)\gamma_n} 2^{L(n_\circ)\gamma_n}
\le \sum_{n\ge n_\circ} 2^{-(n-n_\circ)} \le 2.\end{align*}
Hence $\sum_{n: |J_n|< |I|} N(E_n\cap (J_n\cap I),\delta) \lesssim 2c (\delta/|I|)^{-\gamma}.$
The two cases imply \eqref{eqn:assouadabove}, and this settles the case $\gamma=\gamma_*$ in Remark \ref{rem:assouad}.

Finally consider the case $\gamma_*\in (\gamma,1]$. Take a set $E'$ as constructed above with
$\dim_{\mathrm M}\! E'=\beta$, $\dim_{\mathrm{qA}}\!E' =\dim_{\mathrm A}\!E'=\gamma$ and $\overline{\mathcal T_{E'}}=\W$ and a set
$G_{\gamma_*} $ as in Lemma \ref{lem:minkzero} with $\dim_{\mathrm M}G_{\gamma_*}=0 $ and $\dim_{\mathrm A}G_{\gamma_*}=\gamma_* $. Define
$E=E'\cup G_{\gamma_*}$. Then
$\dim_{\mathrm{A}}\!E=\max(\dim_{\mathrm{A}}\! E', \dim_{\mathrm{A}}\! G_{\gamma_*})=\gamma_*$ and
since $\overline{\mathcal T_{ G_{\gamma_*}}}= \mathcal Q(0,0)\supset\W$ we see that $\overline{\mathcal{T}_{E}}=\W$.
\qed

\end{document}